\newtheorem{definition}{Definition}[section]
\newtheorem{theorem}[definition]{Theorem}
\newtheorem{lemma}[definition]{Lemma}
\newtheorem{example}[definition]{Example}
\newtheorem{conjecture}[definition]{Conjecture}
\newtheorem{problem}[definition]{Problem}
\newtheorem{assumption}[definition]{Assumption}
\newtheorem{proposition}[definition]{Proposition}
\begin{document} 

\title{\bf An equitable partition for
 the distance-\\regular graph of  the bilinear forms
}
\author{
Paul Terwilliger and Jason Williford}
\date{}

\maketitle
\begin{abstract} We consider a type of distance-regular graph $\Gamma=(X, \mathcal R)$ called a bilinear forms graph. We assume that the diameter $D$ of $\Gamma$ is at least $3$.
Fix adjacent vertices $x,y \in X$. In our first main result, we introduce an equitable partition of $X$ that has $6D-2$ subsets and the following feature:
for every subset in the equitable partition, the vertices in the subset are equidistant to $x$ and equidistant to $y$.
This equitable partition is called the $(x,y)$-partition of $X$. By definition, the subconstituent algebra $T=T(x)$  is generated by the Bose-Mesner
algebra of $\Gamma$ and the dual Bose-Mesner algebra of $\Gamma$ with respect to $x$.
As we will see, for the $(x,y)$-partition of $X$ the characteristic vectors of the subsets form a basis for
a $T$-module $U=U(x,y)$.  In our second main result, we decompose $U$ into an orthogonal direct sum of irreducible $T$-modules.
This sum has five summands:  the primary $T$-module and four irreducible $T$-modules that have endpoint one. 
 We show that every irreducible $T$-module
with endpoint one is isomorphic to exactly one of the nonprimary summands.
\medskip

\noindent
{\bf Keywords}. distance-regular graph; equitable partition; subconstituent algebra; Terwilliger algebra.  \hfil\break
\noindent {\bf 2020 Mathematics Subject Classification}.
Primary: 05E30. Secondary: 05C50.
 \end{abstract}
 
 \section{Introduction}
 Let $\Gamma=(X,\mathcal R)$ denote a distance-regular graph with diameter $D\geq 3$ (formal definitions begin in Section 2). 
 Throughout this section, fix a vertex $x \in X$.
It can be illuminating to describe $\Gamma$ from the point of view of $x$.
In this description, there is  a  combinatorial perspective and an algebraic perspective.
 In the combinatorial perspective, one uses the path-length distance function $\partial$ as follows.
 There is a partition of $X$ into subsets $\lbrace  \Gamma_i(x) \rbrace_{i=0}^D$ such that $\Gamma_i(x) = \lbrace y \in X \vert \partial (x,y)=i\rbrace $ for $0 \leq i \leq D$.
  This partition is equitable in the sense of \cite[p.~436]{bcn}.
In the algebraic perspective one considers the subconstituent algebra $T=T(x)$; this is generated by the Bose-Mesner algebra of $\Gamma$
and the dual Bose-Mesner algebra of $\Gamma$ with respect to $x$. The algebra $T$ is finite-dimensional, noncommutative, and semisimple.
The Wedderburn decomposition of $T$ is discussed in many works; for a survey see  \cite[Section~13]{int}.
The combinatorial and algebraic perspectives are related by the fact that for
the equitable partition  $\lbrace  \Gamma_i(x) \rbrace_{i=0}^D$, the characteristic vectors of the subsets form a basis for an irreducible $T$-module \cite[Section~7]{int}. This $T$-module is
said to be primary \cite[Definition~7.2]{int}.
\medskip

\noindent  In order to understand the nonprimary irreducible $T$-modules, it can be useful to
fix a vertex $y\in X$ such that $y \not=x$, and
employ an equitable partition of $X$ with the following feature:
\begin{align}
\begin{split}
& \hbox{for every subset in the equitable partition, the vertices}\\ 
&\hbox{in the subset are equidistant to $x$ and equidistant to $y$.}
\end{split}
\label{eq:feature}
\end{align}
Such equitable partitions have been employed in the following situations:

\bigskip
\begin{tabular}[t]{ccc}
$\partial(x,y)$ & $\Gamma$ description & citations
 \\
 \hline
$1$ & $1$-homogeneous& \cite{1hom, hom}\\
$1$ & tight & \cite{jtgo, jkt} \\  
$1$ &$Q$-polynomial and $a_1=0$ & \cite{mik1, mik5} \\
$1$ &classical parameters of negative type &\cite{mik2, mik6} \\
$1$ &pseudo 1-homogeneous &\cite{1hom, jt2}  \\
$2$& bipartite and $2$-homogeneous &\cite{2HomCurt, 2homT}\\
$2$ &bipartite $Q$-polynomial and $c_2=1$ &\cite{maclean,mik4} \\
$2$ &bipartite and $c_2=2$ &\cite{penjic} \\
$2$ &bipartite dual polar graph &\cite{mik7} \\
$D$ & bipartite $Q$-polynomial &\cite{caughman1, caughman2}  \\
$D$ &supports a spin model & \cite{CN2H,nomSpinModel} 
    \end{tabular}

\bigskip

\noindent In the present paper, we assume that $\Gamma$ is a bilinear forms graph \cite[p.~280]{bcn}.
We fix a vertex $y \in X$ such that $\partial(x,y)=1$. In our first main result, we introduce  an equitable partition of $X$ that has $6D-2$ subsets and the feature \eqref{eq:feature}.
This equitable partition is called the $(x,y)$-partition of $X$. As we will see, for the $(x,y)$-partition of $X$ the characteristic vectors of the subsets form a basis for
a $T$-module $U=U(x,y)$.  In our second main result, we decompose $U$ into an orthogonal direct sum of irreducible $T$-modules.
This sum has five summands:  the primary $T$-module and four irreducible $T$-modules that have endpoint one in the sense of \cite[Section~13]{int}. We show that every irreducible $T$-module
with endpoint one is isomorphic to exactly one of the nonprimary summands. We obtain this last result using the Hobart/Ito classification \cite{hobart}
of the irreducible $T$-modules  that have endpoint one. 
\medskip

\noindent This paper is organized as follows. Section 2 contains some preliminaries.
Section 3 contains some definitions and basic facts about distance-regular graphs.
Sections 4, 5 contain some definitions and basic facts about the bilinear forms graph $\Gamma=(X,\mathcal R)$.
In Section 6, for adjacent $x,y \in X$ we introduce the $(x,y)$-partition of $X$ and describe it from three points of view.
In Section 7, we prove that the $(x,y)$-partition of $X$ is equitable.
In Section 8, we first use the $(x,y)$-partition of $X$ to obtain a module $U=U(x,y)$ for $T=T(x)$. We then
decompose $U$ into an orthogonal direct sum of irreducible $T$-modules.
In Section 9, we make some comments about $U$.
In Section 10, we give a conjecture and some open problems.
\medskip

\noindent The main results of the paper are
Propositions \ref{prop:parts}, \ref{prop:part2}, \ref{prop:part3}, \ref{prop:equit} and Theorem \ref{thm:Udecomp}.

 \section{Preliminaries}
 In this section, we review some linear algebra  concepts that will be used in the main body of  the paper. 
 Let $\mathbb C$ denote the complex number field.
 For positive integers $m,n$ let the set ${\rm Mat}_{m\times n}(\mathbb C)$ consist of the
 $m \times n $ matrices that have all entries in $\mathbb C$.
 \medskip
 
 \noindent Let $V$ denote an $n$-dimensional vector space over $\mathbb C$. Let
 $\lbrace u_i \rbrace_{i=1}^n$ and $\lbrace v_i \rbrace_{i=1}^n$ denote bases for $V$.
  Define $H \in {\rm Mat}_{n\times n}(\mathbb C)$ such that $v_j = \sum_{i=1}^n H_{i,j} u_i$ for $1 \leq j \leq n$.
 We call $H$ the {\it transition matrix from $\lbrace u_i \rbrace_{i=1}^n$ to $\lbrace v_i \rbrace_{i=1}^n$}.
 \medskip
 
\noindent Let $U$ denote an $n$-dimensional vector space over $\mathbb C$, with a basis $\lbrace u_i \rbrace_{i=1}^n$.
Let $V$ denote an $m$-dimensional vector space over $\mathbb C$, with a basis $\lbrace v_i \rbrace_{i=1}^m$.
Given a $\mathbb C$-linear map $A: U \to V$, define $B \in {\rm Mat}_{m\times n} (\mathbb C)$ such
 that $A u_j = \sum_{i=1}^m B_{i,j} v_i$ for $1 \leq j \leq n$. We call $B$ the {\it matrix that represents $A$ with
 respect to $\lbrace u_i \rbrace_{i=1}^n$ and $\lbrace v_i \rbrace_{i=1}^m$}.
 \medskip
  
   \noindent Let $V$ denote an $n$-dimensional vector space over $\mathbb C$.
 A $\mathbb C$-linear map $A: V \to V$ is called {\it diagonalizable} whenever
 $V$ is spanned by the eigenspaces of $A$. Assume that $A$ is diagonalizable. By an {\it eigenvalue} of $A$
 we mean a root of the minimal polynomial of $A$. For an eigenvalue $\theta$ of $A$, the corresponding {\it primitive idempotent} 
is the $\mathbb C$-linear map $V \to V$ that acts as the identity map on the $\theta$-eigenspace of $A$, and as $0$ on every other eigenspace of $A$.
For an eigenvalue $\theta $ of $A$, the corresponding {\it multiplicity} is the dimension of the $\theta$-eigenspace of $A$.

 \section{Distance-regular graphs}
 
 \noindent In this section, we review some definitions and basic concepts concerning distance-regular graphs.
 More information on this topic can be found in 
 \cite{bbit, bannai, bcn, dkt,int}.
 \medskip
 
 \noindent 
 Let $X$ denote a finite set with $\vert X \vert \geq 2$. An element in $X$ is called a {\it vertex}.
 Let ${\rm Mat}_X(\mathbb C)$ denote the $\mathbb C$-algebra consisting of the 
matrices that have rows and columns indexed by $X$ and all entries in $\mathbb C$. 
Let   $V=\mathbb C^X$ denote the $\mathbb C$-vector space consisting of the column vectors that have coordinates indexed by $X$ and all entries in $\mathbb C$.
The algebra  ${\rm Mat}_X(\mathbb C)$ acts on $V$ by left multiplication. We call $V$ the {\it standard module}.
 We endow $V$ with a Hermitian form $\langle \,,\,\rangle$ such that 
 $\langle u,v\rangle = \overline{u}^t v$ for $u,v \in V$. Here $t$ denotes transpose and $-$ denotes complex-conjugation.
  For $x \in X$ let $\hat x$ denote the vector in $V$ that has $x$-coordinate $1$ and all other
 coordinates 0. The vectors $\lbrace \hat x \vert x \in X\rbrace$ form an orthonormal basis for $V$.
\medskip

\noindent Let 
 $\Gamma=(X, \mathcal R)$ denote an undirected, connected  graph, without loops or multiple edges, with vertex set $X$, adjacency relation $\mathcal R$, and 
 path-length distance function $\partial$. Define $D={\rm max} \lbrace \partial(x,y) \vert x,y \in X\rbrace$ and note that $D\geq 1$.
 We call $D$ the {\it diameter} of $\Gamma$.
For $x \in X$ and $0 \leq i \leq D$ define the set $\Gamma_i(x) = \lbrace y \in X \vert \partial(x,y)=i\rbrace$.
We abbreviate $\Gamma(x) = \Gamma_1(x)$.
For an integer $k\geq 1$ we say that $\Gamma$ is {\it regular with valency $k$} whenever $\vert \Gamma(x) \vert=k$ for all $x \in X$.
 We say that $\Gamma$ is {\it distance-regular} whenever for $0 \leq h,i,j\leq D$ and $x,y \in X$ at distance $\partial(x,y)=h$,
 the integer
 \begin{align*}
 p^h_{i,j} = \vert \Gamma_i(x) \cap \Gamma_j(y) \vert
 \end{align*}
 is independent of $x,y$ and depends only on $h,i,j$. 
 For the rest of this paper, we assume that $\Gamma$ is distance-regular. The parameters $p^h_{i,j}$ $(0\leq h,i,j\leq D)$ are
 called the {\it intersection numbers} of $\Gamma$.
 For notational convenience, we abbreviate
 \begin{align*}
 c_i = p^i_{1,i-1} \; (1 \leq i \leq D),\qquad \quad a_i = p^i_{1,i} \;(0 \leq i \leq D), \qquad \quad b_i = p^i_{1,i+1} \; (0 \leq i \leq D-1).
 \end{align*}
 By construction, $c_1=1$ and $a_0=0$.
 The graph $\Gamma$ is regular  with valency $k=b_0$. Moreover
 \begin{align*}
 c_i + a_i + b_i = k \qquad \qquad (0 \leq i \leq D),
 \end{align*}
where $c_0=0$ and $b_D=0$.
 \medskip
 
 \noindent For $0 \leq i \leq D$, define $k_i = p^0_{i,i}$ and note that $k_i = \vert \Gamma_i(x) \vert$ for all $x \in X$. We have $k_0=1$ and $k_1=k$. By \cite[p.~247]{bbit},
 \begin{align*}
 k_i = \frac{b_0 b_1 \cdots b_{i-1}}{c_1 c_2 \cdots c_i} \qquad \qquad (0 \leq i \leq D).
 \end{align*}
 
\noindent We recall the adjacency matrix of $\Gamma$. 
 Define $A \in {\rm Mat}_X(\mathbb C)$ with $(x,y)$-entry
 \begin{align*}
 A_{x,y} = \begin{cases} 1 & \hbox{ if $\partial(x,y)=1$};\\
 0 & \hbox{ if $\partial(x,y) \not=1$}
 \end{cases} \qquad \qquad x,y \in X.
 \end{align*}
 \noindent We call $A$ the {\it adjacency matrix} of $\Gamma$. The matrix $A$ is real and symmetric, so it is diagonalizable.
 By the {\it eigenvalues} (resp. {\it primitive idempotents}) of $\Gamma$ we mean the eigenvalues (resp. primitive idempotents) of $A$. By \cite[p.~128]{bcn}, $\Gamma $ has $D+1$ eigenvalues
 $k=\theta_0 > \theta_1 > \cdots > \theta_D$.
 For $0 \leq i \leq D$ let $E_i$ denote the primitive idempotent of $\Gamma$ associated with $\theta_i$.
 By \cite[Section~2]{int} the matrices $\lbrace E_i \rbrace_{i=0}^D$ form a basis for the subalgebra $M$ of ${\rm Mat}_X(\mathbb C)$ generated by $A$.
 The subalgebra $M$ is called the  {\it Bose-Mesner algebra} of $\Gamma$.
 \medskip
 
\noindent We recall the dual Bose-Mesner algebras of $\Gamma$. For the rest of this section, fix $x \in X$.
For $0 \leq i \leq D$ define a diagonal matrix $E^*_i = E^*_i (x)$ in ${\rm Mat}_X(\mathbb C)$ with $(y,y)$-entry
 \begin{align*}
 (E^*_i)_{y,y} = \begin{cases} 1 & \hbox{ if $\partial(x,y)=i$};\\
 0 & \hbox{ if $\partial(x,y) \not=i$}
 \end{cases} \qquad \qquad y \in X.
 \end{align*}
 \noindent
We call $E_i^*$ the  $i$th {\it dual primitive idempotent of $\Gamma$
 with respect to $x$} \cite[p.~378]{tSub1}. By construction
(i) $I=\sum_{i=0}^D E_i^*$;
(ii) $(E_i^*)^t = E_i^*$ $(0 \leq i \leq D)$;
(iii) $\overline{E^*_i} = E_i^*$ $(0 \leq i \leq D)$;
(iv) $E_i^*E_j^* = \delta_{i,j}E_i^* $ $(0 \leq i,j \leq D)$.
Therefore, the matrices
$\lbrace E_i^*\rbrace_{i=0}^D$ form a 
basis for a commutative subalgebra
$M^*=M^*(x)$ of 
${\rm Mat}_X(\mathbb C)$.
We call 
$M^*$ the {\it dual Bose-Mesner algebra of
$\Gamma$ with respect to $x$} \cite[p.~378]{tSub1}.
By construction,
\begin{align*}
E^*_i V = {\rm Span} \lbrace \hat y \vert y \in \Gamma_i(x) \rbrace \qquad \qquad (0 \leq i \leq D).
\end{align*}
\noindent Moreover,
\begin{align*}
V = \sum_{i=0}^D E^*_i V \qquad \qquad \hbox{(orthogonal direct sum).}
 \end{align*}
 \noindent By the triangle inequality,
 \begin{align}
 \label{eq:triangle}
 E^*_i A E^*_j  =0 \quad \hbox{\rm if} \; \vert i - j \vert > 1 \qquad \qquad (0 \leq i,j\leq D).
 \end{align}

 \noindent We recall the subconstituent algebra of $\Gamma$ with respect to $x$. Let $T=T(x)$ denote the subalgebra of ${\rm Mat}_X(\mathbb C)$ generated by $M$ and $M^*$.
 The algebra $T$ is finite-dimensional and noncommutative.  We call $T$ the {\it subconstituent algebra} (or {\it Terwilliger algebra}) of $\Gamma$ with respect to $x$, see \cite{tSub1, tSub2, tSub3}.
 \medskip
 
 \noindent We describe some elements in $T$.  Define the matrices
 \begin{align}
 L = \sum_{i=1}^D E^*_{i-1} A E^*_i, \qquad \quad F = \sum_{i=0}^D E^*_i A E^*_i, \qquad \quad R= \sum_{i=0}^{D-1} E^*_{i+1} A E^*_i.
 \label{eq:LFR}
\end{align}
By  $I = \sum_{i=0}^D E^*_i$ and \eqref{eq:triangle},
\begin{align*}
A = L + F + R.
\end{align*}
By construction,
\begin{align*}
&L E^*_i V \subseteq E^*_{i-1} V \qquad \quad (1 \leq i \leq D), \qquad L E^*_0V =0, \\
& F E^*_i V \subseteq E^*_i V \qquad \quad (0 \leq i \leq D), \\
&R E^*_i V \subseteq E^*_{i+1} V \qquad \quad (0 \leq i \leq D-1), \qquad RE^*_D V=0.
\end{align*}
\noindent We call $L$  (resp. $F$) (resp. $R$) the {\it lowering map} (resp. {\it flat map}) (resp. {\it raising map}) of $\Gamma$ with respect to $x$.
\medskip

 \noindent We recall the irreducible $T$-modules. The algebra $T$ is closed under the conjugate-transpose map, so $T$ is semisimple \cite[Lemma~3.4(i)]{tSub1}.
 By a {\it $T$-module} we mean a subspace $W \subseteq V$ such that $T W \subseteq W$. A $T$-module $W$ is {\it irreducible} whenever 
 $W \not=0$ and $W$ does not contain a $T$-module besides $0$ and $W$.
 For a $T$-module $W$ the orthogonal complement
 $W^\perp = \lbrace v \in V \vert \langle w,v \rangle =0\rbrace $ is a $T$-module. It follows that any $T$-module  is an orthogonal direct sum of
 irreducible $T$-modules. In particular, the standard module $V$ is an orthogonal direct sum of irreducible $T$-modules \cite[Lemma~3.4(ii)]{tSub1}.
 Let $W$ denote an irreducible $T$-module. Then $W$ is an orthogonal direct sum of the nonzero subspaces among $\lbrace E^*_iW\rbrace_{i=0}^D$ \cite[Lemma~3.4(iii)]{tSub1}.
 By the {\it endpoint} of $W$ we mean
 \begin{align*}
 {\rm min} \lbrace i \vert 0 \leq i \leq D, E^*_iW \not=0\rbrace.
 \end{align*}
 By the {\it diameter} of $W$ we mean
 \begin{align*}
\bigl \vert \lbrace i \vert 0 \leq i \leq D, E^*_i W \not=0\rbrace \bigr\vert -1.
 \end{align*}
 The $T$-module $W$ is called {\it thin} whenever 
 \begin{align*}
 {\rm dim}\, E^*_i W \leq 1 \qquad \qquad (0 \leq i \leq D).
 \end{align*}
 
 \noindent Let $W, W'$ denote irreducible $T$-modules. By a  {\it $T$-module isomorphism from $W$ to $W'$} we mean a $\mathbb C$-linear bijection $\sigma: W \to W'$
 such that $ B \sigma = \sigma B $ on $W$ for all $B \in T$. The $T$-modules $W, W'$ are said to be {\it isomorphic} whenever there exists a $T$-module isomorphism from
 $W$ to $W'$.
 Nonisomorphic irreducible $T$-modules are orthogonal  \cite[Lemma~3.3]{curtin}.
 \medskip
 
 \noindent There exists a unique irreducible $T$-module with endpoint 0, said to be {\it primary}. The primary $T$-module is described in \cite[Section~7]{int}.
 \medskip
 
 \noindent Let $W$ denote an irreducible $T$-module with endpoint 1 and ${\rm dim}\,E^*_1W=1$. By construction $F E^*_1 W \subseteq E^*_1W$. Therefore
 there exists $\eta \in \mathbb C$ such that $(F-\eta I) E^*_1 W=0$. We call $\eta$ the {\it local eigenvalue of $W$}.
 \medskip
 
 \noindent We will be discussing automorphisms of $\Gamma$. The concept of a graph
 automorphism is explained in \cite[p.~435]{bcn}.

 \section{The bilinear forms graph $\Gamma$}
 
 We turn our attention to the following graph $\Gamma=(X, \mathcal R)$. Fix a finite field ${\rm GF}(q)$ with $q\not=2$. Fix
 integers $D, N$ such that $N>2D\geq 6$.
 The vertex set $X$ consists of the $D \times (N-D)$ matrices that have all entries in ${\rm GF}(q)$. 
 Thus $\vert X \vert = q^{D(N-D)}$.
 Two vertices
 are adjacent whenever their difference has rank 1. The graph $\Gamma$ is called the bilinear forms graph and often denoted $H_q(D, N-D)$, see \cite[p.~280]{bcn}.
 The following facts about $\Gamma$ are taken from \cite[Section~9.5]{bcn}.
 The graph $\Gamma$ is distance-regular with diameter $D$ and intersection numbers
 \begin{align}
 c_i = q^{i-1} \frac{q^i -1}{q-1}, \qquad \quad b_i = \frac{(q^{N-D}-q^i)(q^{D}-q^i)}{q-1} 
 \qquad \qquad (0 \leq i \leq D). \label{eq:cibi}
 \end{align} 
 We have
 \begin{align}
 a_i =      \frac{q^i -1}{q-1} \Bigl           (q^{N-D}  +q^D   -q^i - q^{i-1}-1\Bigr) \qquad \qquad (0 \leq i \leq D). \label{eq:ai}
 \end{align}
 In particular,
 \begin{align}
 a_1 = q^{N-D} +q^D    -q-2, \qquad \quad 
 a_D = \frac{q^D-1}{q-1}             \Bigl(q^{N-D} - q^{D-1}-1\Bigr). \label{eq:a1}
 \end{align} 
 The eigenvalues of  $\Gamma$ are
 \begin{align}
 \theta_i = \frac{q^{N-i} + 1 - q^D-q^{N-D}}{q-1} \qquad \qquad (0 \leq i \leq D). \label{eq:theta}
 \end{align}
  
\noindent 
The graph $\Gamma$ is distance-transitive in the sense of  \cite[p.~136]{bcn},
and  formally self-dual in the sense of \cite[p.~49]{bcn}.
By \cite[p.~280]{bcn} we have
 \begin{align}
 \partial(x,y) = {\rm rank}\,(x-y) \qquad \qquad (x, y \in X).  \label{eq:Rdist}
 \end{align}
 
 \noindent More information about the bilinear forms graph can be found in \cite{delsarte, qtet,kim1,tanaka, alt}.
 
\section{The local graph with respect to a vertex  in $\Gamma$}
We continue to discuss the bilinear forms graph $\Gamma=(X, \mathcal R)$.
Throughout this section, we fix a vertex $x \in X$. Recall that $\Gamma(x)$ is the set of
vertices in $X$ that are adjacent to $x$. We have
\begin{align}
\vert \Gamma(x) \vert =k=  \frac{(q^{N-D}-1)(q^D-1)}{q-1}.       \label{eq:valency}
\end{align}
 The vertex subgraph of $\Gamma$ induced on $\Gamma(x)$
is called the {\it local graph of $\Gamma$ with respect to $x$}. For notational convenience, we denote this local graph by $\Gamma(x)$.
\medskip

\noindent By construction, the local graph $\Gamma(x)$ is regular with valency $a_1$.
To describe this graph in more detail, we use the notation
 \begin{align*}
 \lbrack n \rbrack = \frac{q^n-1}{q-1} \qquad \qquad n \in \mathbb N.
 \end{align*}
 By \cite[Section~6.1]{hobart}  the local graph $\Gamma(x)$ is a $(q-1)$-clique extension of a Cartesian product of complete graphs
 $K_{\lbrack D \rbrack} \times K_{\lbrack N-D\rbrack}$. In particular, the local graph $\Gamma(x)$ is connected.

\begin{lemma} \label{lem:localSpec} {\rm (See \cite[Theorem~6.1]{hobart}.)} 
For the local graph $\Gamma(x)$ the eigenvalues and
their multiplicities are given in the table below:
\begin{align*} 
\begin{tabular}[t]{c|c}
{\rm eigenvalue }& {\rm multiplicity}
 \\
 \hline
$a_1 = q^{N-D}+q^D-q-2$ & $1$ \\
$q^{N-D}-q-1$  &$ \frac{q^D-q}{q-1}$ \\
$q^D-q-1$ & $\frac{q^{N-D}-q}{q-1}$ \\
$-1$ & $ \frac{(q^D-1)(q^{N-D}-1)(q-2)}{(q-1)^2}$ \\
$-q$ & $\frac{(q^D-q)(q^{N-D}-q)}{(q-1)^2}$
    \end{tabular}
\end{align*}
\end{lemma}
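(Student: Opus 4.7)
The plan is to compute the spectrum directly from the structural description, stated just before the lemma, of the local graph $\Gamma(x)$ as a $(q-1)$-clique extension of $K_{\lbrack D\rbrack}\times K_{\lbrack N-D\rbrack}$. I will carry out the calculation in two tensor-product stages: first determine the spectrum of the Cartesian product, then apply the general formula for the spectrum of a clique extension.

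For a complete graph $K_m$ one has $A(K_m)=J_m-I_m$ with eigenvalues $m-1$ of multiplicity $1$ and $-1$ of multiplicity $m-1$. Writing the adjacency matrix of $K_{\lbrack D\rbrack}\times K_{\lbrack N-D\rbrack}$ as $A(K_{\lbrack D\rbrack})\otimes I+I\otimes A(K_{\lbrack N-D\rbrack})$, its eigenvalues are all sums $\mu+\nu$ with $\mu\in\{\lbrack D\rbrack-1,-1\}$ and $\nu\in\{\lbrack N-D\rbrack-1,-1\}$, each with multiplicity the product of the individual multiplicities. Next, for a $(q-1)$-clique extension $G$ of a graph $H$ on $n$ vertices, the adjacency matrix factors as $A(G)=A(H)\otimes J_{q-1}+I_n\otimes(J_{q-1}-I_{q-1})$. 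Decomposing $\mathbb{C}^{q-1}$ into the all-ones line (on which $J_{q-1}$ acts as $q-1$) and its orthogonal complement (on which $J_{q-1}$ acts as $0$), each eigenvalue $\lambda$ of $A(H)$ lifts to $(q-1)\lambda+q-2$ in $A(G)$ with the same multiplicity, and the orthogonal complement contributes the eigenvalue $-1$ with multiplicity $n(q-2)$.

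Applying the lift to the four eigenvalues $\lbrack D\rbrack+\lbrack N-D\rbrack-2$, $\lbrack D\rbrack-2$, $\lbrack N-D\rbrack-2$, $-2$ of the Cartesian product and simplifying via $\lbrack n\rbrack=(q^n-1)/(q-1)$ produces the four values $a_1$, $q^D-q-1$, $q^{N-D}-q-1$, $-q$ with multiplicities $1$, $\lbrack N-D\rbrack-1$, $\lbrack D\rbrack-1$, $(\lbrack D\rbrack-1)(\lbrack N-D\rbrack-1)$ respectively, which agree with rows one, three, two, and five of the claimed table after expanding the bracket notation. The orthogonal complement contributes the eigenvalue $-1$ with multiplicity $\lbrack D\rbrack\lbrack N-D\rbrack(q-2)=(q^D-1)(q^{N-D}-1)(q-2)/(q-1)^2$, matching row four. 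Since $D,N-D\geq 3$, none of the four lifted values equals $-1$, so no merging of multiplicities occurs and the table is exactly reproduced. The main obstacle is purely bookkeeping: verifying the four arithmetic identities for $(q-1)\lambda+q-2$ and confirming that the two product-form multiplicities match after clearing denominators.
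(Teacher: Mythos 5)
Your proposal is correct, but it takes a different route from the paper: the paper gives no proof of this lemma at all, simply citing Theorem~6.1 of Hobart--Ito, whereas you derive the spectrum yourself from the structural description of $\Gamma(x)$ as a $(q-1)$-clique extension of $K_{\lbrack D \rbrack}\times K_{\lbrack N-D\rbrack}$ (a description the paper also imports from Hobart--Ito). Your two-stage tensor computation is sound: the Cartesian-product eigenvalues $\lbrack D\rbrack+\lbrack N-D\rbrack-2$, $\lbrack D\rbrack-2$, $\lbrack N-D\rbrack-2$, $-2$ with multiplicities $1$, $\lbrack N-D\rbrack-1$, $\lbrack D\rbrack-1$, $(\lbrack D\rbrack-1)(\lbrack N-D\rbrack-1)$ are right, the clique-extension lift $\lambda\mapsto(q-1)\lambda+q-2$ together with the extra eigenvalue $-1$ of multiplicity $\lbrack D\rbrack\lbrack N-D\rbrack(q-2)$ follows correctly from the factorization $A(G)=A(H)\otimes J_{q-1}+I\otimes(J_{q-1}-I_{q-1})$, and the arithmetic reproduces the table exactly; one can also confirm the multiplicities sum to $k=\lbrack D\rbrack\lbrack N-D\rbrack(q-1)$. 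Your check that no collision with $-1$ occurs uses $q\geq 3$ (which holds since $q\neq 2$ is a prime power) and $D, N-D\geq 3$; you should also note that the four lifted values are pairwise distinct, which follows from $D\neq N-D$ (guaranteed by $N>2D$). What your approach buys is a self-contained verification that does not require the reader to consult the cited classification, at the cost of taking the clique-extension description as the starting point --- which is itself a nontrivial fact about the bilinear forms graph that both you and the paper ultimately owe to Hobart--Ito.
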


\section{The $(x,y)$-partition of $X$}
We continue to discuss the bilinear forms graph $\Gamma=(X, \mathcal R)$. For the rest of this paper, we fix adjacent vertices $x,y \in X$.
We will use $x,y$ to get an equitable partition of $X$, called the $(x,y)$-partition. We will proceed as follows.
In this section, we describe the $(x,y)$-partition of $X$ from three points of view. In the next section, we will show that the $(x,y)$-partition of $X$ is equitable.
\medskip

\noindent We now describe the $(x,y)$-partition of $X$ from the first point of view.

\begin{proposition} \label{prop:parts}
There exists a partition of $X$ into $6D-2$ nonempty subsets
\begin{align*}
&O_{i,i-1} \quad (1 \leq i \leq D), \qquad \qquad \quad O^A_{i,i} \quad (2 \leq i \leq D), \\
&O^B_{i,i} \quad (1 \leq i \leq D), \qquad \qquad \qquad O^C_{i,i} \quad (1 \leq i \leq D), \\
&O^D_{i,i} \quad (1 \leq i \leq D-1), \qquad \qquad O_{i-1,i} \quad (1 \leq i \leq D)
\end{align*}
such that the following {\rm (i)--\rm (vi)}  holds.
\begin{enumerate}
\item[\rm (i)] 
For $1 \leq i \leq D$,
\begin{align*}
O_{i,i-1}= \Gamma_i(x) \cap \Gamma_{i-1}(y).
\end{align*}
\item[\rm (ii)] For $2 \leq i \leq D$, the set $O^A_{i,i}$ consists of the vertices $z \in \Gamma_i(x) \cap \Gamma_i(y)$ such that 
\begin{align*}
\vert \Gamma(x) \cap \Gamma(y) \cap \Gamma_{i-1}(z) \vert = 2 q^{i-1}, \qquad \qquad 
\vert \Gamma(x) \cap \Gamma(y) \cap \Gamma_{i+1}(z) \vert = 0.
 \end{align*}
\item[\rm (iii)] For $1 \leq i \leq D$, the set $O^B_{i,i}$ consists of the vertices $z \in  \Gamma_i(x) \cap \Gamma_i(y)$ such that 
\begin{align*}
\vert \Gamma(x) \cap \Gamma(y) \cap \Gamma_{i-1}(z) \vert = 2 q^{i-1}-1, \qquad \qquad 
\vert \Gamma(x) \cap \Gamma(y) \cap \Gamma_{i+1}(z) \vert = 0.
\end{align*}
\item[\rm (iv)] For $1 \leq i \leq D$, the set  $O^C_{i,i}$ consists of the vertices $z \in  \Gamma_i(x) \cap \Gamma_i(y)$ such that 
\begin{align*}
\vert \Gamma(x) \cap \Gamma(y) \cap \Gamma_{i-1}(z) \vert = q^{i-1}, \qquad \qquad 
\vert \Gamma(x) \cap \Gamma(y) \cap \Gamma_{i+1}(z) \vert = q^D-q^i.
\end{align*}
\item[\rm (v)] For $1 \leq i \leq D-1$, the set $O^D_{i,i}$ consists of the vertices $z \in  \Gamma_i(x) \cap \Gamma_i(y)$ such that 
\begin{align*}
\vert \Gamma(x) \cap \Gamma(y) \cap \Gamma_{i-1}(z) \vert = q^{i-1}, \qquad \qquad 
\vert \Gamma(x) \cap \Gamma(y) \cap \Gamma_{i+1}(z) \vert = q^{N-D}-q^i.
\end{align*}
\item[\rm (vi)] 
For $1 \leq i \leq D$,
\begin{align*}
O_{i-1,i}= \Gamma_{i-1}(x) \cap \Gamma_{i}(y).
\end{align*}
\end{enumerate}
\end{proposition}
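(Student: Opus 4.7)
The plan is to work in the explicit matrix model of $\Gamma$. By translating, assume $x=0$ (the zero matrix) and $y=u_0 v_0^t$ is a rank-$1$ matrix with nonzero $u_0\in\mathrm{GF}(q)^D$ and $v_0^t\in\mathrm{GF}(q)^{N-D}$. By \eqref{eq:Rdist}, $\partial(x,z)=\mathrm{rank}(z)$ and $\partial(y,z)=\mathrm{rank}(z-y)$ for every $z\in X$. Since $\partial(x,y)=1$, the triangle inequality forces $|\partial(x,z)-\partial(y,z)|\le 1$, so each $z$ lies in one of three families, according to whether $\partial(x,z)=\partial(y,z)+1$, $\partial(x,z)=\partial(y,z)-1$, or $\partial(x,z)=\partial(y,z)$. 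The first two families are the sets $O_{i,i-1}$ and $O_{i-1,i}$ of (i) and (vi); for $1\le i\le D$ each is nonempty by an explicit construction, e.g.\ $z=y+M$ for a rank-$(i-1)$ matrix $M$ whose row and column spaces are transverse to $u_0$ and $v_0^t$.

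Before handling the equidistant case, I would parametrize $\Gamma(x)\cap\Gamma(y)$: a common neighbor is a rank-$1$ matrix $w$ with $w-y$ also of rank $1$, and a difference of two rank-$1$ matrices has rank at most $1$ iff they share a column space or a row space. Hence
\begin{align*}
\Gamma(x)\cap\Gamma(y)=S_L\cup S_R,\quad S_L=\{u_0 b^t:b^t\ne 0,\ b^t\ne v_0^t\},\quad S_R=\{a v_0^t:a\ne 0,\ a\ne u_0\},
\end{align*}
with $S_L\cap S_R=\{\alpha y:\alpha\in\mathrm{GF}(q)\setminus\{0,1\}\}$ of size $q-2$. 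Inclusion-exclusion recovers $|\Gamma(x)\cap\Gamma(y)|=a_1$ as a consistency check, and this explicit parametrization drives every count in (ii)--(v).

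For $z\in\Gamma_i(x)\cap\Gamma_i(y)$ I would classify $z$ by two Boolean invariants: whether $u_0\in\mathrm{col}(z)$ and whether $v_0^t\in\mathrm{row}(z)$. A standard rank-perturbation argument shows that $w=u_0 b^t\in S_L$ satisfies $\mathrm{rank}(z-w)=i-1$ precisely when $u_0\in\mathrm{col}(z)$ and $b^t$ lies in a specific affine subspace of $\mathrm{GF}(q)^{N-D}$ of size $q^{i-1}$ determined by $z$, and analogously for $w=av_0^t\in S_R$. Summing the contributions of $S_L$ and $S_R$ and correcting for the overlap $S_L\cap S_R$ produces the four possible values $0,\,q^{i-1},\,2q^{i-1}-1,\,2q^{i-1}$ for $|\Gamma(x)\cap\Gamma(y)\cap\Gamma_{i-1}(z)|$. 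A dual count handles $|\Gamma(x)\cap\Gamma(y)\cap\Gamma_{i+1}(z)|$: the $w=av_0^t\in S_R$ with $\mathrm{rank}(z-w)=i+1$ contribute $q^D-q^i$ (when $u_0\notin\mathrm{col}(z)$), while the $w=u_0b^t\in S_L$ contribute $q^{N-D}-q^i$ (when $v_0^t\notin\mathrm{row}(z)$). Matching the four resulting configurations of the invariants to the labels in (ii)--(v) completes the equidistant classification.

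The main obstacle will be the clean separation of types A and B, where both invariants are present but the counts differ by one. The discrepancy traces to whether $y$ itself arises as a compatible rank-$1$ summand of $z$ in the decomposition relative to $(u_0,v_0^t)$; this single alignment costs exactly one common neighbor (essentially the element $y\in S_L\cap S_R$, which lies at distance $i-1$ from $z$ in the B-configuration and at distance $i+1$ in the A-configuration). The prescribed index ranges in (ii) and (v) are forced by the formulas: $O^A_{1,1}$ would require two common neighbors at distance $0$ from $z$, impossible since $\Gamma_0(z)=\{z\}$; and $O^D_{D,D}$ would demand $q^{N-D}-q^D>0$ vertices at distance $D+1$ from $z$, which do not exist. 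Nonemptiness of each remaining class is obtained by exhibiting an explicit block-matrix representative in each type, the invariant labels are mutually exclusive and jointly exhaust $X$, and the total count is $D+(D-1)+D+D+(D-1)+D=6D-2$ as claimed.
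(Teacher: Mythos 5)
Your proposal is essentially correct, but it reaches Proposition \ref{prop:parts} by a different route than the paper. The paper proves Propositions \ref{prop:parts}, \ref{prop:part2}, \ref{prop:part3} simultaneously: it defines the classes as orbits of explicit canonical forms under the group $G$ generated by the allowed elementary row and column operations, shows these orbits cover $X$ by Gaussian elimination, shows each orbit lands in one of the mutually disjoint classes cut out by the five rank parameters ${\rm rank}(z)$, ${\rm rank}(z-y)$, ${\rm rank}(\vert z)$, ${\rm rank}(\overline z)$, ${\rm rank}(\ulcorner z)$, and then defers the common-neighbor counts of Proposition \ref{prop:parts} to ``routine combinatorial counting.'' You instead bypass the orbit/canonical-form machinery entirely and work directly with the invariants $u_0\in{\rm col}(z)$, $v_0^t\in{\rm row}(z)$ (which are equivalent to ${\rm rank}(\overline z)$, respectively ${\rm rank}(\vert z)$, dropping by one) together with the parametrization $\Gamma(x)\cap\Gamma(y)=S_L\cup S_R$ and the Wedderburn-type criterion for a rank-one update to decrease or increase rank. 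This actually supplies the counting detail the paper leaves implicit, and your observations ruling out $O^A_{1,1}$ and $O^D_{D,D}$ and forcing the index ranges are correct. What your route does not deliver is the orbit description of Proposition \ref{prop:part2}, which the paper reuses later; that is not needed for the present statement.

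Two slips to repair. First, in your distance-$(i+1)$ count the parenthetical conditions are crossed: the $S_R$ contribution of $q^D-q^i$ occurs when $v_0^t\notin{\rm row}(z)$ (type $C$, where necessarily $u_0\in{\rm col}(z)$, so $a=0$ and $a=u_0$ are already excluded from $\lbrace a\notin{\rm col}(z)\rbrace$), and the $S_L$ contribution of $q^{N-D}-q^i$ occurs when $u_0\notin{\rm col}(z)$ (type $D$). Second, the element responsible for the $A$/$B$ discrepancy is not $y$ itself (which lies in neither $S_L$ nor $S_R$) but the unique scalar multiple $\alpha y$ with $\alpha=\beta^{-1}\notin\lbrace 0,1\rbrace$, where $\beta=d^tu_0$ for $d^tz=v_0^t$; moreover in the $A$-configuration ($\beta=0$) every $\alpha y$ lies at distance exactly $i$ from $z$, not $i+1$, since both invariants hold and a rank-one subtraction supported on ${\rm col}(z)$ and ${\rm row}(z)$ cannot raise the rank. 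Neither slip affects the totals $2q^{i-1}$, $2q^{i-1}-1$, $q^{i-1}$, nor the overall validity of the plan.
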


\noindent The proof of Proposition \ref{prop:parts} will be completed later in this section.
\medskip

\noindent Referring   to Proposition \ref{prop:parts}, if we swap $x \leftrightarrow y$ then $O_{i,i-1}$ and $O_{i-1,i}$ get swapped
and each of
 $O^A_{i,i}$, $O^B_{i,i}$,  $O^C_{i,i}$, $O^D_{i,i}$ is unchanged.

\begin{definition}\label{def:xyPartition} \rm
The partition of $X$ from Proposition \ref{prop:parts}  is called the {\it $(x,y)$-partition of $X$}.
\end{definition}

\noindent We just described the $(x,y)$-partition of $X$ from the first point of view. We will give the second point of view after a brief
discussion.
\medskip

\noindent Recall that the vertex set $X$ consists of the $D \times (N-D)$ matrices that
have all entries in ${\rm GF}(q)$. For some calculations, it is convenient to invoke
distance-transitivity  and make the following assumption.
\begin{assumption} \label{assume} \rm Under the {\it standard assumption},
\begin{enumerate}
\item[\rm (i)] $x=0$;
\item[\rm (ii)] $y$ has $(1,1)$-entry $1$ and all other entries $0$.
\end{enumerate}
\end{assumption}

\noindent  For the rest of this section, the standard assumption  is in force.
\medskip

\noindent We describe some automorphisms of $\Gamma$.  The general linear groups
\begin{align*}
{\rm GL}_D\bigl({\rm GF}(q) \bigr), \qquad \qquad {\rm GL}_{N-D}\bigl({\rm GF}(q) \bigr)
 \end{align*}
  act  on $X$ by left and right multiplication, respectively. These actions  commute, so they induce a group homomorphism
from the direct sum ${\rm GL}_D\bigl({\rm GF}(q) \bigr)\times {\rm GL}_{N-D}\bigl({\rm GF}(q) \bigr) $ into the automorphism group ${\rm Aut}(\Gamma)$.
The automorphisms from ${\rm GL}_D\bigl({\rm GF}(q) \bigr)\times {\rm GL}_{N-D}\bigl({\rm GF}(q) \bigr) $  fix $x$, but do not fix $y$ in general.
\medskip

\noindent  Next, we describe some automorphisms 
from  ${\rm GL}_D\bigl({\rm GF}(q) \bigr)\times {\rm GL}_{N-D}\bigl({\rm GF}(q) \bigr) $ that fix both $x$ and $y$. 
There are some elements in ${\rm GL}_D\bigl({\rm GF}(q) \bigr)$  called elementary row operations.
 The following elementary row operations fix $y$ as well as $x$:
\begin{enumerate}
\item[\rm (i)] interchange row $i$ and row $j$  $(2 \leq i<j\leq D)$;
\item[\rm (ii)] multiply row $i$ by a nonzero scalar $(2 \leq i \leq D)$;
\item[\rm (iii)] replace row $i$ by row $i$ plus a scalar multiple of row $j$ $(1 \leq i \leq D)$, $(2 \leq j \leq D)$.
\end{enumerate}
The elementary row operations of the
above types (i)--(iii) will be called {\it allowed}.
Let $G_{\rm row}$ denote the subgroup of  ${\rm GL}_D\bigl({\rm GF}(q) \bigr)$
 generated by the allowed elementary row operations.
  By construction, every element of $G_{\rm row}$ fixes each of $x,y$.
\medskip

\noindent
There are some elements in ${\rm GL}_{N-D}\bigl({\rm GF}(q) \bigr)$ called elementary column operations.
 The following elementary column operations fix $y$ as well as $x$:
\begin{enumerate}
\item[\rm (i)] interchange column $i$ and column $j$  $(2 \leq i<j\leq N-D)$;
\item[\rm (ii)] multiply column $i$ by a nonzero scalar $(2 \leq i \leq N-D)$;
\item[\rm (iii)] replace column $i$ by column $i$ plus a scalar multiple of column $j$ $(1 \leq i \leq N-D)$, $(2 \leq j \leq N-D)$.
\end{enumerate}
The elementary column operations of the
above types (i)--(iii) will be called {\it allowed}.
Let $G_{\rm col}$ denote the subgroup of  ${\rm GL}_{N-D}\bigl({\rm GF}(q) \bigr)$
 generated by the allowed elementary column operations.
  By construction, every element of $G_{\rm col}$ fixes each of $x,y$.
\medskip

\noindent Let $G$ denote the subgroup of
  ${\rm GL}_D\bigl({\rm GF}(q) \bigr)\times {\rm GL}_{N-D}\bigl({\rm GF}(q) \bigr) $ 
  generated by
$G_{\rm row}$ and $G_{\rm col}$. The group $G$ is isomorphic to the direct sum $G_{\rm row} \times G_{\rm col}$.
By construction, every element of $G$ fixes each of $x,y$. By construction, for $z \in X$ the following are the same:
\begin{enumerate}
\item[\rm (i)] the $G$-orbit containing $z$;
\item[\rm (ii)] the set $G_{\rm row} z G_{\rm col}$.
\end{enumerate}

\noindent
In the next result we display some matrices in $X$.
For each displayed matrix we  show the nonzero part, and sometimes insert an extra 0 to clarify the position of the entries. 
\medskip

\noindent We now describe the $(x,y)$-partition of $X$ from a second point of view.

\begin{proposition} \label{prop:part2} \rm The $(x,y)$-partition of $X$ 
 looks as follows under the standard assumption.
\begin{enumerate}
\item[\rm (i)] For $1 \leq i \leq D$,
\begin{align*}
&O_{i,i-1} = G_{\rm row}  \begin{pmatrix} I_{i} & & \\
&  & \\
\end{pmatrix} G_{\rm col}.
\end{align*}
\item[\rm (ii)] For $2 \leq i \leq D$,
\begin{align*}
 O_{i,i}^A  = G_{\rm row}  \begin{pmatrix} 0&&1\\
&I_{i-2} &  \\
1&  & \\
\end{pmatrix} G_{\rm col}.
\end{align*}
\item[\rm (iii)] For $1 \leq i \leq D$,
\begin{align*}
&O_{i,i}^B  =  \bigcup_{\xi \not=0,1}    \Biggl\lbrace  G_{\rm row} \begin{pmatrix} \xi &&\\
&I_{i-1} &  \\
&  & \\
\end{pmatrix} G_{\rm col} \Biggr \rbrace.
\end{align*}
\item[\rm (iv)] For $1 \leq i \leq D$,
\begin{align*}
O_{i,i}^C = G_{\rm row} \begin{pmatrix} 0&&1\\
&I_{i-1} &  \\
&  & \\
\end{pmatrix} G_{\rm col}.
\end{align*}
\item[\rm (v)] For $1 \leq i \leq D-1$,
\begin{align*}
O_{i,i}^D =G_{\rm row} \begin{pmatrix} 0&&\\
&I_{i-1} &  \\
1&  & \\
\end{pmatrix} G_{\rm col}.
\end{align*}
\item[\rm (vi)] For $1 \leq i \leq D$,
\begin{align*}
O_{i-1,i} = G_{\rm row} \begin{pmatrix} 0&&\\
&I_{i-1} &  \\
&  & \\
\end{pmatrix} G_{\rm col}.
\end{align*}
\end{enumerate}
\end{proposition}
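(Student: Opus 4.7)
Since every element of $G = G_{\mathrm{row}} \times G_{\mathrm{col}}$ is a graph automorphism of $\Gamma$ fixing both $x$ and $y$, the $(x,y)$-partition of $X$ is invariant under $G$: the distances $\partial(x, z)$, $\partial(y, z)$ and the intersection numbers $\vert \Gamma(x) \cap \Gamma(y) \cap \Gamma_j(z)\vert$ used in Proposition \ref{prop:parts} are all preserved by the $G$-action. Consequently each $G$-orbit lies in a single part, and the plan is to (A) verify that each of the six displayed canonical matrices lies in the claimed part of the $(x,y)$-partition, and (B) show that the $G$-orbits of these canonical matrices (with $\xi$ ranging in (iii)) exhaust $X$.

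For stage (A), I will read off $\partial(x, z) = \mathrm{rank}(z)$ and $\partial(y, z) = \mathrm{rank}(z - y)$ directly from the block structure of each canonical $z$, using that $z - y$ differs from $z$ only in the $(1,1)$ entry. For cases (ii)--(v), where both distances equal $i$, I will count $\vert \Gamma(x) \cap \Gamma(y) \cap \Gamma_{i \pm 1}(z)\vert$ using the parameterization
\[
\Gamma(x) \cap \Gamma(y) = \{e_1 v^t : v \in \mathrm{GF}(q)^{N-D} \setminus \{0, e_1\}\} \cup \{u e_1^t : u \in \mathrm{GF}(q)^{D} \setminus \{0, e_1\}\},
\]
which follows by a short rank argument: writing a rank-one $w$ as $uv^t$, one checks that $w - E_{11} = uv^t - e_1 e_1^t$ has rank $1$ iff $u$ or $v$ is a scalar multiple of $e_1$ (excluding $w = E_{11}$). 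For each canonical $z$ and each such $w$, the rank of $w - z$ is a short calculation from the pivot pattern of $z$; summing over the two families recovers exactly the intersection numbers in Proposition \ref{prop:parts}(ii)--(v).

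For stage (B), I reduce an arbitrary $z \in X$ to a canonical form. Writing
\[
z = \begin{pmatrix} a & \mathbf{b}^t \\ \mathbf{c} & M \end{pmatrix}
\]
with respect to the first-row/first-column split, I first apply allowed operations acting on the submatrix $M$ as the full group $\mathrm{GL}_{D-1}(\mathrm{GF}(q)) \times \mathrm{GL}_{N-D-1}(\mathrm{GF}(q))$ to bring $M$ to the form $\bigl(\begin{smallmatrix} I_r & 0 \\ 0 & 0 \end{smallmatrix}\bigr)$, simultaneously transforming $\mathbf{b}$ and $\mathbf{c}$. Second, adding multiples of the pivot rows of $M$ into row $1$ (and similarly for columns) kills the components of $\mathbf{b}, \mathbf{c}$ aligned with those pivots, at the cost of shifting $a$ to a scalar $a'$. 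Third, I scale and pivot the remaining ``free'' portions of $\mathbf{b}, \mathbf{c}$ into standard form $(1,0,\ldots,0)$ or $0$, which is possible since the rows/columns involved lie outside row/column $1$ and outside the pivot block of $M$. Finally, if a surviving $1$ in row $1$ sits beyond column $1$, adding a suitable multiple of that column into column $1$ kills $a'$. A case analysis on whether each of $\mathbf{b}, \mathbf{c}$ survives and on the residual value of $a'$ lands $z$ in one of the six families (i)--(vi).

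The main obstacle is the bookkeeping in stage (B): the allowed operations \emph{exclude} scaling row $1$, scaling column $1$, and adding row/column $1$ into others, so the residual scalar $a'$ in case (iii) cannot be normalized to $1$. This is precisely why $O_{i,i}^B$ must be written as a union over $\xi \in \mathrm{GF}(q) \setminus \{0, 1\}$ of distinct $G$-orbits, already visible at $i=1$ where each $\xi E_{11}$ is a singleton $G$-orbit. Tracking which scalar adjustments of $a$ are and are not accessible under the allowed operations is the delicate point that produces exactly the canonical forms (i)--(vi); the rank and intersection-number calculations of stage (A) are routine but numerous, and doubly serve to complete the proof of Proposition \ref{prop:parts} itself.
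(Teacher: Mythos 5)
Your proposal is correct, and its skeleton coincides with the paper's: take the displayed matrices as canonical forms, use the fact that $G$ consists of automorphisms fixing $x$ and $y$ so that each $G$-orbit lies in a single part, show the orbits exhaust $X$ by Gaussian elimination restricted to the allowed operations, and verify that each canonical form lands in the named part. The one genuine difference is in the separating invariants. The paper proves Propositions \ref{prop:parts}, \ref{prop:part2}, \ref{prop:part3} simultaneously and uses the rank parameters of Proposition \ref{prop:part3} --- ${\rm rank}(z)$, ${\rm rank}(z-y)$, ${\rm rank}(\vert z)$, ${\rm rank}(\overline z)$, ${\rm rank}(\ulcorner z)$ --- as the invariants: these are $G$-invariant by Lemma \ref{lem:Gfix}, are read off each canonical form by inspection, and visibly take distinct value-patterns on the six families, so a sandwich argument ($O \subseteq \mathcal O$, the $\mathcal O$'s disjoint, the $O$'s covering) forces everything to match; only afterwards is the ``routine combinatorial counting'' of Proposition \ref{prop:parts} carried out. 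You instead verify the intersection numbers $\vert \Gamma(x)\cap\Gamma(y)\cap\Gamma_{i\pm1}(z)\vert$ of Proposition \ref{prop:parts} directly on each canonical form via your parameterization of $\Gamma(x)\cap\Gamma(y)$ (which is correct, though note the two families there overlap in the $q-2$ matrices $\xi E_{11}$, so the count must subtract that overlap). Your route buys a self-contained proof that never mentions the auxiliary rank parameters, at the cost of a heavier case-by-case computation at the verification stage; the paper's route makes the orbit-separation step nearly free and isolates the counting as a final independent task. Your stage (B) reduction and your handling of the non-normalizable scalar $a'$ (which is exactly why part (iii) is a union over $\xi \neq 0,1$) are both right and supply detail the paper leaves implicit.
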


\noindent The proof of Proposition \ref{prop:part2} will be completed later in this section.
\medskip

\noindent  We just described the $(x,y)$-partition of $X$ from the second point of view. We will give the third point of view
after a brief discussion.

\begin{definition}\label{def:zVar} \rm For $z \in X$,
\begin{enumerate}
\item[\rm (i)] 
 let $\vert z$ denote the matrix obtained by replacing each entry in the first column  of $z$ by $0$;
 \item[\rm (ii)] let $\overline z$ denote the matrix obtained by replacing each entry in the first row of $z$ by $0$;
 \item[\rm (iii)] let $\ulcorner z$ denote the matrix obtained by replacing each entry in the first column  and first row  of $z$ by $0$.
\end{enumerate}
\end{definition}
\begin{example} \rm For $D=3$ and $N=7$ an example is
\begin{align*}
& z = \begin{pmatrix} 1&1&1&1 \\
                               1&1&1&1 \\
                               1&1&1&1
                               \end{pmatrix}, 
                               \qquad \qquad
                              \vert  z = \begin{pmatrix} 0&1&1&1 \\
                               0&1&1&1 \\
                               0&1&1&1
                               \end{pmatrix}, \\
        & \overline z = \begin{pmatrix} 0&0&0&0 \\
                               1&1&1&1 \\
                               1&1&1&1
                               \end{pmatrix},   \qquad \qquad    \ulcorner z = \begin{pmatrix} 0&0&0&0 \\
                               0&1&1&1 \\
                               0&1&1&1
                               \end{pmatrix}.                                  
\end{align*}
\end{example}

\begin{lemma} For $z \in X$ we have
\begin{align*}
\vert (z-y) = \vert z, \qquad \qquad            \overline {z-y} = \overline z,  \qquad \qquad \ulcorner (z-y) = \ulcorner z.
\end{align*}
\end{lemma}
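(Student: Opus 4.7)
The plan is to argue directly from the standard assumption and from the definition of the three bracket operations. Under the standard assumption, $y$ is the $D \times (N-D)$ matrix with $(1,1)$-entry equal to $1$ and all other entries equal to $0$. Consequently, $z-y$ and $z$ agree entry-by-entry everywhere except possibly at the $(1,1)$-position, where $(z-y)_{1,1} = z_{1,1}-1$.

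Next I would observe that the position $(1,1)$ lies in the first column and in the first row. So, by Definition \ref{def:zVar}, the operation $\vert \,\cdot\,$ zeros out the $(1,1)$-entry (and the rest of the first column), the operation $\overline{\,\cdot\,}$ zeros out the $(1,1)$-entry (and the rest of the first row), and the operation $\ulcorner\,\cdot\,$ zeros out the $(1,1)$-entry (and the rest of both). In particular, after applying any one of these three operations, the $(1,1)$-entry of the result is $0$ regardless of what it was before, and every entry that is not zeroed out is retained unchanged.

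Thus for each operation, I would compare the outputs entry-by-entry: at positions that are zeroed out by the operation (which include $(1,1)$), both outputs are $0$; at all other positions, the operation leaves the entry intact, and at those positions $z$ and $z-y$ agree since $y$ is supported only at $(1,1)$. Therefore the outputs of the operation applied to $z$ and to $z-y$ coincide, yielding the three claimed identities $\vert(z-y) = \vert z$, $\overline{z-y} = \overline{z}$, and $\ulcorner(z-y) = \ulcorner z$.

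There is no serious obstacle here; the lemma is essentially a bookkeeping statement that encapsulates the observation that $y$ is supported in a position that all three bracket operations kill. Its role, I expect, is to make later manipulations cleaner by letting one pass freely between $z$ and $z-y$ inside any of these three bracket operations when analyzing distances to $x=0$ and to $y$.
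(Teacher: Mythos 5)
Your proof is correct and takes essentially the same approach as the paper, which simply notes that $z-y$ and $z$ agree at every entry except the $(1,1)$-entry; your version just spells out the additional (obvious) step that all three bracket operations zero out that entry. Nothing is missing.
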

\begin{proof} The matrices $z-y$ and $z$ agree at every entry except the $(1,1)$-entry.
\end{proof}

\noindent For $z \in X$ we consider the parameters
\begin{align}
{\rm rank}(z), \qquad 
{\rm rank}(z-y), \qquad 
{\rm rank}(\vert z), \qquad 
{\rm rank}(\overline z), \qquad 
{\rm rank}(\ulcorner z).
\label{eq:param}
\end{align}

\noindent We have some comments.

\begin{lemma} \label{lem:dist} For $z \in X$ we have
\begin{align*}
\partial(x,z) = {\rm rank}(z), \qquad \qquad \partial(y,z) = {\rm rank}(z-y).
\end{align*}
\end{lemma}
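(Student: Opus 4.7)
The plan is to observe that this lemma is essentially an immediate specialization of equation \eqref{eq:Rdist} under the standard assumption (Assumption \ref{assume}), which fixes $x=0$. The only two ingredients are (a) the general identity $\partial(u,v) = {\rm rank}(u-v)$ for vertices $u,v \in X$ given in \eqref{eq:Rdist}, and (b) the elementary fact that multiplying a matrix by $-1$ does not change its rank (since ${\rm GF}(q)$ is a field and $-1$ is a unit).

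First I would apply \eqref{eq:Rdist} to the pair $x,z$ to get $\partial(x,z) = {\rm rank}(x-z)$. Then, using Assumption \ref{assume}(i), which says $x = 0$, this becomes ${\rm rank}(-z) = {\rm rank}(z)$, giving the first equation. Next I would apply \eqref{eq:Rdist} to the pair $y,z$ to get $\partial(y,z) = {\rm rank}(y-z) = {\rm rank}(-(z-y)) = {\rm rank}(z-y)$, giving the second equation.

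There is no real obstacle here; the lemma is just a convenient restatement of \eqref{eq:Rdist} under the coordinate normalization provided by the standard assumption. The only thing worth mentioning explicitly in the write-up is that the symmetry $\partial(u,v) = \partial(v,u)$ (which is implicit in treating $\partial(x,z)$ versus ${\rm rank}(x-z)={\rm rank}(z-x)$) is automatic because $\Gamma$ is an undirected graph, and equivalently because ${\rm rank}(M) = {\rm rank}(-M)$ for any matrix $M$ over a field.
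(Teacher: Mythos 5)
Your proposal is correct and matches the paper's own proof, which simply cites \eqref{eq:Rdist} together with the standard assumption. The extra remark about ${\rm rank}(M)={\rm rank}(-M)$ is a harmless elaboration of the same one-line argument.
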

\begin{proof} By \eqref{eq:Rdist} and the standard assumption.
\end{proof}

\begin{lemma} \label{lem:Gfix} For $z \in X$ and $g \in G$, each parameter in \eqref{eq:param}  is unchanged if we replace $z$ by $g(z)$.
\end{lemma}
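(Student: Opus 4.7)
The plan is to translate the $G$-action into explicit matrix form, then reduce each invariance to rank being preserved by multiplication by invertible matrices, plus a small structural commutation involving the first row/column.

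First I would pin down $G_{\rm row}$ and $G_{\rm col}$ as matrix subgroups under the standard assumption. Since $y$ has all entries zero except a $1$ in position $(1,1)$, an element $P\in{\rm GL}_D({\rm GF}(q))$ fixes $y$ under left multiplication iff its first column equals $e_1$; and it is immediate that every allowed row operation has this form, while conversely every such $P$ lies in $G_{\rm row}$ by the usual Gauss-Jordan argument using only rows $j\ge 2$. Symmetrically, $Q\in G_{\rm col}$ iff its first row equals $e_1^t$. So $g\in G$ acts on $z\in X$ as $g(z)=PzQ$ with $P$ and $Q$ of this shape, and in particular $PyQ=y$.

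The first two parameters are then immediate: ${\rm rank}(PzQ)={\rm rank}(z)$ by invertibility, and ${\rm rank}(PzQ-y)={\rm rank}(P(z-y)Q)={\rm rank}(z-y)$. For the remaining three, introduce the diagonal projections $J_D=I-e_1e_1^t\in{\rm Mat}_D({\rm GF}(q))$ and $J_{N-D}=I-e_1e_1^t\in{\rm Mat}_{N-D}({\rm GF}(q))$, so that $\vert z=zJ_{N-D}$, $\overline z=J_Dz$, $\ulcorner z=J_DzJ_{N-D}$. The shape of $P$ and $Q$ forces the block identities $J_DP=J_DPJ_D$ and $QJ_{N-D}=J_{N-D}QJ_{N-D}$, and in the resulting $(D-1)\times(D-1)$ and $(N-D-1)\times(N-D-1)$ blocks $P'$ and $Q'$ are invertible.

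The computation then goes: $\vert(PzQ)=PzQJ_{N-D}$, whose rank equals the rank of its columns $2,\ldots,N-D$; because the first row of $Q$ is $e_1^t$, these columns coincide with $Pz$ times the invertible block $Q'$ sitting in columns $2,\ldots,N-D$ of $Q$, which has the same column span as columns $2,\ldots,N-D$ of $z$, i.e.\ the same rank as $\vert z$. The case of $\overline z$ is symmetric, using $J_DP=J_DPJ_D$ on the left; the case of $\ulcorner z$ follows by combining both identities, which reduces ${\rm rank}(\ulcorner(PzQ))$ to the rank of the lower-right $(D-1)\times(N-D-1)$ submatrix of $z$, i.e.\ to ${\rm rank}(\ulcorner z)$. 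I do not expect a real obstacle here; the only subtlety is keeping the bookkeeping of which block of $P$ or $Q$ acts on which rows/columns of $z$ consistent, which is forced by the pinned-down shape of the elements of $G_{\rm row}$ and $G_{\rm col}$.
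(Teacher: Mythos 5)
Your argument is correct and is exactly the linear algebra that the paper's one-line proof (``By linear algebra'') leaves implicit: every $g\in G$ acts as $z\mapsto PzQ$ with $Pe_1=e_1$ and $e_1^tQ=e_1^t$, so $PyQ=y$ and the block-triangular shape of $P,Q$ preserves all five ranks. The only (harmless) imprecision is that the column span of columns $2,\dots,N-D$ of $PzQ$ is the image under $P$ of that of $z$, not literally the same subspace; the dimensions agree, which is all that is needed.
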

\begin{proof} By linear algebra.
\end{proof}

\noindent We now describe the $(x,y)$-partition of $X$ from a third point of view.

\begin{proposition} \label{prop:part3} The $(x,y)$-partition of $X$ looks as follows 
under the standard assumption.

\begin{enumerate}
\item[\rm (i)]
For $1 \leq i \leq D$, the set $O_{i,i-1}$ consists of the vertices $z \in X$ such that
\begin{align*}
&{\rm rank}(z) = i, \qquad \qquad {\rm rank}(z-y)=i-1, \\
&
{\rm rank}(\vert z) = i-1, \qquad 
{\rm rank}(\overline z) = i-1, \qquad 
{\rm rank}(\ulcorner z) = i-1.
\end{align*}
\item[\rm (ii)] For $2 \leq i \leq D$, the set $O^A_{i,i}$ consists of the vertices $z \in X$ such that 
\begin{align*}
&{\rm rank}(z)=i, \qquad \qquad {\rm rank}(z-y) =i, \\
&{\rm rank}( \vert z)=i-1, \qquad  {\rm rank}(\overline z) = i-1, \qquad   {\rm rank}(\ulcorner z) = i-2.
 \end{align*}
\item[\rm (iii)] For $1 \leq i \leq D$, the set $O^B_{i,i}$ consists of the vertices $z \in X$ such that 
\begin{align*}
&{\rm rank}(z)=i, \qquad \qquad {\rm rank}(z-y)=i, \\
&{\rm rank}(\vert z) = i-1, \qquad  {\rm rank}(\overline z) = i-1, \qquad  {\rm rank}(\ulcorner z) = i-1.
\end{align*}
\item[\rm (iv)] For $1 \leq i \leq D$, the set $O^C_{i,i}$ consists of the vertices $z \in X$ such that 
\begin{align*}
&{\rm rank}(z)=i, \qquad \qquad {\rm rank}(z-y)=i,\\
&{\rm rank}(\vert z) = i, \qquad {\rm rank}(\overline z) = i-1, \qquad  {\rm rank}(\ulcorner z) = i-1.
\end{align*}
\item[\rm (v)] For $1 \leq i \leq D-1$, the set $O^D_{i,i}$ consists of the vertices $z \in X$ such that
\begin{align*}
&{\rm rank}(z)=i,\qquad \qquad {\rm rank}(z-y)=i, \\
&{\rm rank}(\vert z) = i-1, \qquad  {\rm rank}(\overline z)=i, \qquad               {\rm rank}(\ulcorner z) = i-1.
\end{align*}
\item[\rm (vi)]
For $1 \leq i \leq D$, the set  $O_{i-1,i}$ consists of the vertices $z \in X$ such that
\begin{align*}
&{\rm rank}(z) = i-1, \qquad \qquad {\rm rank}(z-y) = i, \\
&
{\rm rank}(\vert z) = i-1, \qquad 
{\rm rank}(\overline z) = i-1, \qquad 
{\rm rank}(\ulcorner z) = i-1.
\end{align*}
\end{enumerate}
\end{proposition}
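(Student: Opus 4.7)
The plan is to use Proposition \ref{prop:part2}, which exhibits each of the $6D-2$ subsets of the $(x,y)$-partition as a $G$-orbit (or, in the case of $O^B_{i,i}$, a union of $G$-orbits) of an explicit canonical matrix representative, together with Lemma \ref{lem:Gfix}, which asserts that the five rank parameters in \eqref{eq:param} are $G$-invariant. The proposition then reduces to computing the quintuple $\bigl({\rm rank}(z),\,{\rm rank}(z-y),\,{\rm rank}(\vert z),\,{\rm rank}(\overline z),\,{\rm rank}(\ulcorner z)\bigr)$ on each of the six canonical representatives supplied by Proposition \ref{prop:part2}, and then showing that the six resulting tuples are pairwise distinct.

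For the forward direction I would evaluate the five ranks on each canonical representative. In every case the representative is a $0/1$-matrix whose nonzero entries lie in distinct rows and columns (a partial permutation pattern), so the effect of subtracting $y$ (which alters only the $(1,1)$ entry) and of zeroing the first column or first row can be read off directly. For instance, in case (i) the representative $\mathrm{diag}(I_i, 0)$ has rank $i$; $z - y$ kills the $(1,1)$ entry, dropping the rank to $i-1$; and zeroing the first column, the first row, or both has the same effect, giving rank $i-1$ each time. Case (iii) is the only one involving a parameter $\xi \neq 0, 1$, and there the rank tuple is independent of $\xi$ precisely because $\xi \neq 0$ keeps ${\rm rank}(z) = i$ while $\xi \neq 1$ keeps ${\rm rank}(z - y) = i$. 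Once the tuple is verified on each canonical representative, Lemma \ref{lem:Gfix} propagates it to every vertex in the corresponding $O_*$, establishing the forward implication in all six cases.

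For the converse, the six computed tuples are
\begin{align*}
(i, i-1, i-1, i-1, i-1),\quad &(i, i, i-1, i-1, i-2),\quad (i, i, i-1, i-1, i-1),\\
(i, i, i, i-1, i-1),\quad &(i, i, i-1, i, i-1),\quad (i-1, i, i-1, i-1, i-1)
\end{align*}
for cases (i)--(vi) respectively. Inspection shows that these are pairwise distinct as $i$ ranges over the allowed values: by Lemma \ref{lem:dist} the first two coordinates $({\rm rank}(z), {\rm rank}(z-y)) = (\partial(x,z), \partial(y,z))$ already identify the branch ($O_{i,i-1}$, diagonal $(i,i)$, or $O_{i-1,i}$), and within the diagonal branch the remaining three coordinates distinguish the four types $A$, $B$, $C$, $D$. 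Combined with Proposition \ref{prop:parts}, which tells us the sets $O_*$ partition $X$, this distinctness forces any $z \in X$ whose rank tuple matches a given case to lie in the corresponding $O_*$, completing the characterization.

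The main obstacle is not conceptual but careful bookkeeping across the six cases. The most delicate rank computation is in case (ii), where the reverse-diagonal representative of $O^A_{i,i}$ has $1$'s at $(1, i)$, $(2,2), \ldots, (i-1, i-1)$, and $(i, 1)$; one must check that the $-1$ introduced at $(1,1)$ upon subtracting $y$ leaves ${\rm rank}(z - y) = i$, which follows since adding row $i$ to row $1$ reduces the matrix to a partial permutation pattern of rank $i$.
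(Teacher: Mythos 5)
Your proposal is correct and follows essentially the same route as the paper, which proves Propositions \ref{prop:parts}, \ref{prop:part2}, \ref{prop:part3} jointly by checking the rank quintuple on each canonical representative, invoking the $G$-invariance of Lemma \ref{lem:Gfix}, and then combining the covering of $X$ by the orbit-defined sets with the pairwise distinctness of the rank tuples to force equality. The only cosmetic difference is that the paper derives the disjointness of the partition from the distinct tuples rather than quoting it from Proposition \ref{prop:parts}, but the underlying argument is identical.
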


\noindent {\it Proof of Propositions \ref{prop:parts}, \ref{prop:part2}, \ref{prop:part3}}. Without loss of generality, 
we make the standard assumption. We define the subsets
\begin{align}
&O_{i,i-1} \quad (1 \leq i \leq D), \qquad \qquad \quad O^A_{i,i} \quad (2 \leq i \leq D), \label{eq:p1}\\
&O^B_{i,i} \quad (1 \leq i \leq D), \qquad \qquad \qquad O^C_{i,i} \quad (1 \leq i \leq D), \label{eq:p2} \\
&O^D_{i,i} \quad (1 \leq i \leq D-1), \qquad \qquad O_{i-1,i} \quad (1 \leq i \leq D) \label{eq:p3}
\end{align}
to match the descriptions in Proposition \ref{prop:part2}. By construction, each of these subsets is nonempty.
Also, their union  is equal to $X$ because via the allowed elementary
row and column operations we can  transform any vertex in $X$  into at least one of the forms shown in Proposition \ref{prop:part2}.
We define the subsets
\begin{align*}
&{\mathcal O}_{i,i-1} \quad (1 \leq i \leq D), \qquad \qquad \quad {\mathcal O}^A_{i,i} \quad (2 \leq i \leq D), \\
&{\mathcal O}^B_{i,i} \quad (1 \leq i \leq D), \qquad \qquad \qquad {\mathcal O}^C_{i,i} \quad (1 \leq i \leq D), \\
&{\mathcal O}^D_{i,i} \quad (1 \leq i \leq D-1), \qquad \qquad {\mathcal O}_{i-1,i} \quad (1 \leq i \leq D)
\end{align*}
to match the descriptions in Proposition \ref{prop:part3}.  By construction, these subsets are mutually disjoint.
By Definition \ref{def:zVar} and Lemma \ref{lem:Gfix},
\begin{align}
&O_{i,i-1}\subseteq {\mathcal O}_{i,i-1} \quad (1 \leq i \leq D), \qquad \qquad \quad O^A_{i,i}\subseteq {\mathcal O}^A_{i,i} \quad (2 \leq i \leq D), \label{eq:q1}\\
&O^B_{i,i}\subseteq {\mathcal O}^B_{i,i} \quad (1 \leq i \leq D), \qquad \qquad \qquad O^C_{i,i} \subseteq {\mathcal O}^C_{i,i} \quad (1 \leq i \leq D), \label{eq:q2}\\
&O^D_{i,i}\subseteq {\mathcal O}^D_{i,i} \quad (1 \leq i \leq D-1), \qquad \qquad  O_{i-1,i} \subseteq {\mathcal O}_{i-1,i} \quad (1 \leq i \leq D) \label{eq:q3}.
\end{align}
By these comments, $X$ is partitioned into the sets shown in \eqref{eq:p1}--\eqref{eq:p3}, and equality holds everywhere in \eqref{eq:q1}--\eqref{eq:q3}.
By routine combinatorial counting, we find that the  sets shown in \eqref{eq:p1}--\eqref{eq:p3} fit the descriptions in Proposition
\ref{prop:parts}. The results follow.
\hfill $\Box$
\medskip

\noindent We emphasize a  feature of the $(x,y)$-partition of $X$.

\begin{lemma} \label{lem:disjUnion} For the $(x,y)$-partition of $X$,
\begin{enumerate}
\item[\rm (i)] $\Gamma(x) \cap \Gamma(y)$ is a disjoint union of $O^B_{1,1}$, $O^C_{1,1}$, $O^D_{1,1}$;
\item[\rm (ii)] for $2 \leq i \leq D-1$ the set $\Gamma_i(x) \cap \Gamma_i(y)$ is a disjoint union of $O^A_{i,i}$, $O^B_{i,i}$, $O^C_{i,i}$, $O^D_{i,i}$;
\item[\rm (iii)] $\Gamma_D(x) \cap \Gamma_D(y)$ is a disjoint union of $O^A_{D,D}$, $O^B_{D,D}$, $O^C_{D,D}$.
\end{enumerate}
\end{lemma}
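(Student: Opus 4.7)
My plan is to derive this lemma directly from Proposition \ref{prop:parts}, without any additional combinatorial work; the statement is essentially a reindexing of the partition already in hand.

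First I would observe, straight from parts (ii)--(v) of Proposition \ref{prop:parts}, that each of $O^A_{i,i}$, $O^B_{i,i}$, $O^C_{i,i}$, $O^D_{i,i}$ (whenever it is defined) is contained in $\Gamma_i(x)\cap\Gamma_i(y)$. Pairwise disjointness of these subsets inside $\Gamma_i(x)\cap\Gamma_i(y)$ is then inherited from the fact that Proposition \ref{prop:parts} presents the $6D-2$ subsets as a partition of $X$.

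For the covering direction, I would take an arbitrary $z\in\Gamma_i(x)\cap\Gamma_i(y)$ and locate its unique cell in that partition. Since $\partial(x,z)=\partial(y,z)=i$, the vertex $z$ cannot lie in any $O_{j,j-1}$ (which demands $\partial(x,z)>\partial(y,z)$) nor in any $O_{j-1,j}$ (which demands $\partial(x,z)<\partial(y,z)$); and it cannot lie in $O^A_{j,j}$, $O^B_{j,j}$, $O^C_{j,j}$, or $O^D_{j,j}$ for any $j\neq i$, since such subsets sit inside the disjoint piece $\Gamma_j(x)\cap\Gamma_j(y)$. Hence $z$ must belong to one of the subsets among $O^A_{i,i}$, $O^B_{i,i}$, $O^C_{i,i}$, $O^D_{i,i}$ that actually appears in the partition. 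For $i=1$ these are $O^B_{1,1}$, $O^C_{1,1}$, $O^D_{1,1}$; for $2\leq i\leq D-1$ all four appear; for $i=D$ they are $O^A_{D,D}$, $O^B_{D,D}$, $O^C_{D,D}$. This yields (i), (ii), and (iii) respectively.

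Because the argument is entirely bookkeeping once Proposition \ref{prop:parts} is accepted, there is no real obstacle. The only point to track is that $O^A_{i,i}$ is declared only for $2\leq i\leq D$ and $O^D_{i,i}$ only for $1\leq i\leq D-1$; that constraint is already encoded in Proposition \ref{prop:parts}, and it is precisely what forces the three-term disjoint unions at the endpoints $i=1$ and $i=D$.
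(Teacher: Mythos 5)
Your proof is correct and matches the paper's approach: the paper also deduces this lemma directly from Proposition \ref{prop:parts}, and your write-up simply spells out the bookkeeping (containment from parts (ii)--(v), covering from the fact that the $6D-2$ subsets partition $X$ and the other cells are excluded by their distance profiles). No issues.
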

\begin{proof} By Proposition \ref{prop:parts}.
\end{proof}

\section{The $(x,y)$-partition of $X$ is equitable}

We continue to discuss the bilinear forms graph $\Gamma=(X, \mathcal R)$ and the adjacent vertices $x,y \in X$.
Recall the $(x,y)$-partition of $X$ from Definition \ref{def:xyPartition}.
In this section, we show that the $(x,y)$-partition of $X$ is equitable in the sense of \cite[p.~436]{bcn}.

\begin{lemma} \label{lem:bubbleSize}
We have
\begin{align*}
&\vert O_{i,i-1} \vert = \frac{k_i c_i}{k} \quad (1 \leq i \leq D), \qquad \qquad \vert O^A_{i,i} \vert = \frac{k_i}{k} \,\frac{(q^i-1)(q^{i-1}-1)}{q-1} \quad (2 \leq i \leq D), \\
& \vert O^B_{i,i} \vert = \frac{k_i}{k} \,\frac{q^{i-1}(q^i-1)(q-2)}{q-1}, \qquad \quad 
 \vert O^C_{i,i} \vert = \frac{k_i}{k} \,\frac{(q^{N-D}-q^i)(q^i-1)}{q-1} \qquad (1 \leq i \leq D), \\
 &  \vert O^D_{i,i} \vert = \frac{k_i}{k} \,\frac{(q^{D}-q^i)(q^i-1)}{q-1} \quad (1 \leq i \leq D-1), 
 \qquad \qquad \vert O_{i-1,i} \vert = \frac{k_i c_i}{k} \quad (1 \leq i \leq D).
\end{align*}
\end{lemma}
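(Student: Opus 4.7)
The off-diagonal counts $|O_{i,i-1}|$ and $|O_{i-1,i}|$ are immediate from distance-regularity. By Proposition \ref{prop:parts}(i), $|O_{i,i-1}| = |\Gamma_i(x) \cap \Gamma_{i-1}(y)| = p^1_{i,i-1}$, and the standard identity $k_h p^h_{i,j} = k_i p^i_{h,j}$ applied with $(h,i,j) = (1,i,i-1)$ yields $k \cdot p^1_{i,i-1} = k_i c_i$. The formula for $|O_{i-1,i}|$ follows from Proposition \ref{prop:parts}(vi) by the same identity.

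For the four diagonal cells $O^A_{i,i}, O^B_{i,i}, O^C_{i,i}, O^D_{i,i}$, which partition $\Gamma_i(x) \cap \Gamma_i(y)$ by Lemma \ref{lem:disjUnion}, my plan is to invoke the orbit description in Proposition \ref{prop:part2} and compute each $G$-orbit size via the orbit--stabilizer theorem for the action of $G = G_{\rm row} \times G_{\rm col}$ on $X$ by $(P,Q)\cdot z = PzQ^{-1}$. First I would record the ambient sizes: $G_{\rm row}$ consists of those $P \in {\rm GL}_D({\rm GF}(q))$ with $P_{1,1}=1$ and $P_{i,1}=0$ for $i \geq 2$, so $|G_{\rm row}| = q^{D-1}|{\rm GL}_{D-1}({\rm GF}(q))|$, and analogously $|G_{\rm col}| = q^{N-D-1}|{\rm GL}_{N-D-1}({\rm GF}(q))|$. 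For each representative $M$ in Proposition \ref{prop:part2}(ii)--(v) I would then solve $PMQ^{-1}=M$ block by block to obtain a closed form for $|{\rm Stab}_G(M)|$, and divide. For $O^B_{i,i}$ the union runs over $q-2$ values of $\xi$; a rescaling argument in $G_{\rm row}$ shows these $q-2$ orbits have equal size, yielding the factor $(q-2)$. An alternative and equally viable route is to use the rank characterization in Proposition \ref{prop:part3} and count matrices with the prescribed rank profile directly via Gaussian binomial coefficients.

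The main obstacle is the bookkeeping: each of the four representative matrices has a distinct sparse block pattern, and the stabilizer equation produces a different system of block constraints in each case, with each constraint chipping away at a different piece of $|G|$. A useful running sanity check is that the four diagonal sizes must sum to $|\Gamma_i(x) \cap \Gamma_i(y)| = p^1_{i,i} = a_i k_i/k$, which follows from the same intersection-number identity together with \eqref{eq:ai}; a brief algebraic manipulation verifies this, providing strong evidence that the individual counts are correct.
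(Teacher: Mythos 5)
Your proposal is correct and takes essentially the same route as the paper, whose entire proof is the single line ``By combinatorial counting'': the off-diagonal sizes via $k\,p^1_{i,i-1}=k_ic_i$ are complete and right, the orbit--stabilizer (or rank-profile) framework for the diagonal cells is sound (e.g.\ it reproduces $|O^C_{1,1}|=q^{N-D}-q$, $|O^D_{1,1}|=q^D-q$, $|O^B_{1,1}|=q-2$), and your check that the four diagonal sizes sum to $k_ia_i/k$ does verify against \eqref{eq:ai}. The only caveat is that the general stabilizer computations are left as a plan rather than executed, but that is no less detail than the paper itself supplies.
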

\begin{proof}
By combinatorial counting.
\end{proof}

\begin{lemma} \label{lem:structure1}
Let $1\leq i \leq D$. For each set in the tables below, we give the number of vertices in the set that are adjacent to a given vertex in $O_{i,i-1}$:
\bigskip

\begin{tabular}{c|c|c}

$O_{i-1,i-2}$ & $O_{i-1,i}$ & $O_{i,i-1}$  \\          
\hline
$q^{i-2} \frac{q^{i-1}-1}{q-1}$& $q^{2i-2}-q^{2i-3}+q^{i-2}$&$\frac{q^{i-1}-1}{q-1}\bigl(q^{N-D}+q^D-3q^{i-1}+q^{i-2}-1\bigr)$    \\     

\end{tabular}

\bigskip

\begin{tabular}{c|c|c|c|c}

 $O_{i+1,i}$ &$O_{i-1,i-1}^A$ & $O_{i-1,i-1}^B$ & $O_{i-1,i-1}^C$ & $O_{i-1,i-1}^D$ \\
\hline
  $\frac{(q^{N-D}-q^i)(q^D-q^i)}{q-1}$            &   $0$& $0$& $q^{i-2}(q^{i-1}-1)$& $q^{i-2}(q^{i-1}-1)$\\

\end{tabular}

\bigskip

\begin{tabular}{c|c|c|c}

$O_{i,i}^A$ & $O_{i,i}^B$ & $O_{i,i}^C$ & $O_{i,i}^D$ \\
\hline
$(q^{i-1}-1)(q^{i-1}-q^{i-2})$& $q^{i-2}(q-2)(q^i-q^{i-1}+1)$& $q^{i-1}(q^{N-D}-q^i)$& $q^{i-1}(q^{D}-q^i)$\\

\end{tabular}
\end{lemma}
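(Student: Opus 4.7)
The plan is as follows. By Lemma \ref{lem:Gfix} and Proposition \ref{prop:part3}, each set in the $(x,y)$-partition is preserved by $G$, and by Proposition \ref{prop:part2}(i), $G$ acts transitively on $O_{i,i-1}$. Consequently the neighbor counts depend only on the part of $z$, not on the specific choice within that part, so it suffices to fix the canonical representative
\[
z_0 \;=\; \begin{pmatrix} I_i & 0 \\ 0 & 0 \end{pmatrix}
\]
and count neighbors of $z_0$ in each part.

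Every neighbor $w$ of $z_0$ has the form $w = z_0 + u v^T$ for some nonzero $u \in \mathrm{GF}(q)^D$ and nonzero $v \in \mathrm{GF}(q)^{N-D}$, and two pairs $(u,v)$, $(\lambda u, \lambda^{-1} v)$ with $\lambda \in \mathrm{GF}(q)^*$ yield the same $w$; this accounts for exactly $k$ distinct neighbors by \eqref{eq:valency}. Writing $u = \binom{u'}{u''}$ with $u' \in \mathrm{GF}(q)^i$, $u'' \in \mathrm{GF}(q)^{D-i}$, and $v = (v'\ v'')$ with $v' \in \mathrm{GF}(q)^i$, $v'' \in \mathrm{GF}(q)^{N-D-i}$, the block form
\[
w \;=\; \begin{pmatrix} I_i + u'(v')^T & u'(v'')^T \\ u''(v')^T & u''(v'')^T \end{pmatrix}
\]
makes the rank calculations tractable.

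For each such $w$, I would evaluate the five rank parameters in \eqref{eq:param} by standard row and column reductions and then identify the part containing $w$ via Proposition \ref{prop:part3}. The triangle inequality restricts $\mathrm{rank}(w) \in \{i-1,i,i+1\}$ and $\mathrm{rank}(w-y) \in \{i-2,i-1,i\}$, which explains why only the parts listed in the lemma can occur (in particular $O^A_{i-1,i-1}$ and $O^B_{i-1,i-1}$ are unreachable because they would require dropping $\mathrm{rank}(\ulcorner \cdot)$ by more than one under a single rank-one perturbation). The value of $\mathrm{rank}(w)$ is controlled by $u''(v'')^T$ and the invertibility of $I_i + u'(v')^T$, while $\mathrm{rank}(w-y)$ is controlled by the same data together with the invertibility of $I_i + u'(v')^T - E_{11}$. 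Within the diagonal stratum $\mathrm{rank}(w) = \mathrm{rank}(w-y) = i$, the four types $O^{A,B,C,D}_{i,i}$ are separated by $\mathrm{rank}(\vert w)$, $\mathrm{rank}(\overline w)$, $\mathrm{rank}(\ulcorner w)$, each a rank-one perturbation of the corresponding quantity at $z_0$ whose behavior is dictated by whether $u_1$ and $v_1$ are zero.

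For each target part, I would then enumerate the pairs $(u,v)$ satisfying the relevant rank conditions, divide by $q-1$ for the scalar equivalence, and verify the resulting count matches the table. The main obstacle is the bookkeeping in the combined case analysis: three possible values of $\mathrm{rank}(w)$ cross with three of $\mathrm{rank}(w-y)$, and within the diagonal stratum the four subtypes demand a further split on $u_1, v_1$ together with a delicate nondegeneracy condition on $I_i + u'(v')^T - E_{11}$. I would organize the work by disposing first of the off-diagonal strata (where typically $u''=0$ or $v''=0$, reducing to standard rank-one enumerations), then the mixed distance-$i$ strata, and finally use two consistency checks: the neighbors with $\mathrm{rank}(w) = i-1, i, i+1$ must sum to $c_i, a_i, b_i$ respectively (using \eqref{eq:cibi}, \eqref{eq:ai}), and the grand total must equal the valency $k$ in \eqref{eq:valency}.
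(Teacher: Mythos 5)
The paper's own proof of this lemma is the single line ``By combinatorial counting,'' so your proposal is in effect a fleshed-out version of what the authors must have done: use the $G$-transitivity on $O_{i,i-1}$ visible in Proposition \ref{prop:part2}(i) to reduce to the canonical representative $z_0$, parametrize the $k$ neighbors as rank-one perturbations $z_0+uv^T$ modulo the scaling $(u,v)\sim(\lambda u,\lambda^{-1}v)$, read off the five parameters in \eqref{eq:param}, and sort the neighbors by Proposition \ref{prop:part3}. That framework is sound, the list of twelve reachable parts is complete, and your two consistency checks (row sums against $c_i,a_i,b_i$ and against $k$) are the right safeguards. Since you stop short of actually carrying out the enumeration, the table entries themselves remain unverified, but the plan would produce them.

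One concrete flaw in the reasoning as written: your stated reason that $O^B_{i-1,i-1}$ receives zero neighbors is wrong. Membership in $O^B_{i-1,i-1}$ requires ${\rm rank}(\ulcorner w)=i-2$, a drop of only one from ${\rm rank}(\ulcorner z_0)=i-1$, so the ``cannot drop by more than one under a rank-one perturbation'' argument rules out $O^A_{i-1,i-1}$ (which would need ${\rm rank}(\ulcorner w)=i-3$) but says nothing about $O^B_{i-1,i-1}$. The zero there comes instead from an incompatibility among the defining conditions: ${\rm rank}(w)=i-1$ forces $u,v$ to be supported on the first $i$ coordinates with $1+v^Tu=0$; then ${\rm rank}(\vert w)=i-2$ forces $u_1=0$ and ${\rm rank}(\overline w)=i-2$ forces $v_1=0$ (since $u$, resp.\ $v$, must lie in the column, resp.\ row, space of $\vert z_0$, resp.\ $\overline{z_0}$); but with $u_1=v_1=0$ the matrix $w-y$ vanishes in its first row and column of the leading block and equals a singular $I_{i-1}+u v^T$ on the rest, giving ${\rm rank}(w-y)=i-2$ rather than the required $i-1$. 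Your full case analysis would still detect that the admissible set of pairs $(u,v)$ is empty, so the table entry is unaffected, but the parenthetical shortcut is an error you would need to repair before relying on it.
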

\begin{proof} By combinatorial counting.
\end{proof}

\begin{lemma} \label{lem:structureA}
Let $2\leq i \leq D$. For each set in the tables below, we give the number of vertices in the set that are adjacent to a given vertex in $O^A_{i,i}$:
\bigskip

\begin{tabular}{c|c|c|c}

$O_{i-1,i}$ & $O_{i,i-1}$ & $O_{i,i+1}$ & $O_{i+1,i}$ \\
\hline
$q^{i-1}(q^{i-1}-q^{i-2})$& $q^{i-1}(q^{i-1}-q^{i-2})$&$0$ &$0$ \\
\end{tabular}

\bigskip

\begin{tabular}{c|c|c|c}

$O_{i-1,i-1}^A$ & $O_{i-1,i-1}^B$ & $O_{i-1,i-1}^C$ & $O_{i-1,i-1}^D$ \\
\hline
$q^{i-1} \frac{ q^{i-2}-1}{q-1}$ & $0$ & $q^{2i-3} $ & $q^{2i-3} $  \\

\end{tabular}

\bigskip

\begin{tabular}{c|c}

$O_{i,i}^A$ & $O_{i,i}^B$ \\
\hline
$\frac{ q^{i-1}-1}{q-1}\bigl( q^{N-D}+q^D-3q^{i-1}+q^{i-2}-2 \bigr) + \frac{q^{i-2}-1}{q-1} $ & $q^{2i-3}(q-1)(q-2) $ \\
\end{tabular}

\bigskip

\begin{tabular}{c|c|c|c|c|c}

 $O_{i,i}^C$   &$O_{i,i}^D$ & $O_{i+1,i+1}^A$ & $O_{i+1,i+1}^B$ & $O_{i+1,i+1}^C$ & $O_{i+1,i+1}^D$ \\
\hline
 $q^{i-1}(q^{N-D}-q^i) $ &$ q^{i-1}(q^{D}-q^i)$ &$\frac{(q^D-q^i)(q^{N-D}-q^i)}{q-1} $ & $ 0$ & $ 0$ & $ 0$

\end{tabular}
\end{lemma}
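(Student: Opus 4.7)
The plan is to reduce to a canonical representative via the $G$-action and then enumerate neighbors by rank-one updates, applying the rank-based characterization of Proposition \ref{prop:part3}. By Proposition \ref{prop:part2}(ii) every $z \in O^A_{i,i}$ lies in the $G$-orbit of a canonical matrix $z_0$; since $G$ fixes $x$ and $y$, acts as graph automorphisms, and preserves each of the rank invariants in Proposition \ref{prop:part3} by Lemma \ref{lem:Gfix}, the neighbor counts are $G$-invariant and I may fix $z = z_0$.

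The $k = (q^D-1)(q^{N-D}-1)/(q-1)$ neighbors of $z_0$ are the matrices $w = z_0 + uv^t$ for nonzero $u \in {\rm GF}(q)^D$ and $v \in {\rm GF}(q)^{N-D}$, each $w$ realized by $q-1$ pairs under the scaling $(u,v) \mapsto (\lambda u, \lambda^{-1} v)$. To place $w$ in the partition I would compute the five rank invariants of Proposition \ref{prop:part3}. Because the truncation operations of Definition \ref{def:zVar} are linear, one has $\vert w = \vert z_0 + u \tilde v^t$, $\overline w = \overline{z_0} + \tilde u v^t$, and $\ulcorner w = \ulcorner z_0 + \tilde u \tilde v^t$, where $\tilde u = (0,u_2,\ldots,u_D)$ and $\tilde v = (0,v_2,\ldots,v_{N-D})$; together with $w - y = (z_0 - y) + uv^t$, this presents each of the five matrices as a rank-one perturbation of a base whose column and row spans are coordinate subspaces readable directly off $z_0$. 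Applying the standard fact $|{\rm rank}(M + ab^t) - {\rm rank}(M)| \le 1$, with the direction of change governed by whether $a \in {\rm col}(M)$ and $b \in {\rm row}(M)$, all five ranks reduce to vanishing conditions on the coordinates of $u$ and $v$. Splitting into cases on (a) whether $u_1$ and $v_1$ vanish and (b) whether the remaining coordinates of $u$ and $v$ lie in the column and row supports of $\ulcorner z_0$, Proposition \ref{prop:part3} places each $w$ in a unique partition set; counting valid $(u,v)$ and dividing by $q-1$ produces the tabulated entries.

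The main obstacle is the case analysis separating $O^B_{j,j}$, $O^C_{j,j}$, $O^D_{j,j}$, whose Proposition \ref{prop:part3} characterizations differ only by single-unit shifts in ${\rm rank}(\vert w)$ and ${\rm rank}(\overline w)$. Tracking these shifts depends delicately on whether $u$ and $v$ acquire components in the extra coordinate directions created by the top-right and bottom-left $1$'s of $z_0$, so careful bookkeeping is needed to avoid double-counting or omission. Each subcase is itself a routine rank computation, and the answer can be cross-checked against the valency $k$ and, more refinedly, against the intersection numbers $c_i$, $a_i$, $b_i$ obtained by summing the entries over fixed ${\rm rank}(w) = \partial(x,w)$.
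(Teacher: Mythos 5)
Your proposal is correct and is essentially the argument the paper intends: the paper's proof is simply ``by combinatorial counting,'' and the infrastructure it builds for that purpose---the single $G$-orbit representative of Proposition \ref{prop:part2}(ii), the invariance of the five rank parameters under $G$ (Lemma \ref{lem:Gfix}), and the rank characterization of the cells in Proposition \ref{prop:part3}---is exactly what you use, with neighbors enumerated as rank-one perturbations $z_0+uv^t$ counted up to the $(q-1)$-fold scaling. The only point to watch in carrying it out is that when both $a\in{\rm col}(M)$ and $b\in{\rm row}(M)$ the rank of $M+ab^t$ may either stay fixed or drop by one depending on a further scalar condition, which is precisely the bookkeeping you already flag as the delicate part.
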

\begin{proof} By combinatorial counting.
\end{proof}

\begin{lemma} \label{lem:structureB}
Let $1\leq i \leq D$. For each set in the tables below, we give the number of vertices in the set that are adjacent to a given vertex in $O^B_{i,i}$:
\bigskip

\begin{tabular}{c|c|c|c}

$O_{i-1,i}$ & $O_{i,i-1}$ & $O_{i,i+1}$ & $O_{i+1,i}$ \\
\hline
$q^{i-2}(q^i-q^{i-1}+1)$& $q^{i-2}(q^i-q^{i-1}+1)$ &$0$&$0$ \\
\end{tabular}

\bigskip

\begin{tabular}{c|c|c|c}

$O_{i-1,i-1}^A$ & $O_{i-1,i-1}^B$ & $O_{i-1,i-1}^C$ & $O_{i-1,i-1}^D$ \\
\hline
$0$ & $q^{i-2} \frac{q^{i-1}-1}{q-1} $ & $q^{i-2}(q^{i-1}-1) $ & $q^{i-2}(q^{i-1}-1)$  \\

\end{tabular}

\bigskip

\begin{tabular}{c|c|c|c}

$O_{i,i}^A$ & $O_{i,i}^B$ & $O_{i,i}^C$ & $O_{i,i}^D$ \\
\hline
$q^{i-2}(q-1)(q^{i-1}-1) $ & $\alpha_i$ & $q^{i-1}(q^{N-D}-q^i) $ & $ q^{i-1}(q^{D}-q^i)$  \\
\end{tabular}

\bigskip

\begin{tabular}{c|c|c|c}

$O_{i+1,i+1}^A$ & $O_{i+1,i+1}^B$ & $O_{i+1,i+1}^C$ & $O_{i+1,i+1}^D$ \\
\hline
$0 $ & $\frac{(q^D-q^i)(q^{N-D}-q^i)}{q-1} $ & $0 $ & $ 0$  \\

\end{tabular}

\bigskip

\bigskip

$\alpha_i = \frac{q^{i-1}-1}{q-1} \Bigl( q^{N-D}+q^D+q^{i+1}-5q^{i}+4q^{i-1}-2q^{i-2} +q^2-4q+2 \Bigr) +q-3. $
\end{lemma}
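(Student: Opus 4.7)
My plan is to reduce each count to that of a canonical representative of $O^B_{i,i}$ via the $G$-action, then enumerate rank-one perturbations and read off partition cells via Proposition \ref{prop:part3}.

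Under the standard assumption, Proposition \ref{prop:part2}(iii) writes each $z \in O^B_{i,i}$ as $g M_\xi h$ for some $g \in G_{\rm row}$, $h \in G_{\rm col}$, and $\xi \in {\rm GF}(q) \setminus \{0, 1\}$, where $M_\xi$ has $\xi$ in the $(1,1)$ position, the $(i-1) \times (i-1)$ identity in rows and columns $2, \ldots, i$, and zeros elsewhere. Since every element of $G$ is a graph automorphism that fixes both $x$ and $y$, it stabilizes every part of the $(x,y)$-partition. Hence it suffices to count neighbors of $M_\xi$ in each part for each such $\xi$, and to verify that the count is independent of $\xi$ and matches the tabulated values.

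Next, every neighbor of $M_\xi$ has the form $z' = M_\xi + uv^t$ with nonzero $u \in {\rm GF}(q)^D$ and $v \in {\rm GF}(q)^{N-D}$, where $(u,v)$ is unique up to the rescaling $(cu, c^{-1}v)$, giving exactly $k$ neighbors. Splitting $u = (u_1, u_{2:i}, u_{i+1:D})$ and $v = (v_1, v_{2:i}, v_{i+1:N-D})$ into three blocks each, the five ranks
\[
{\rm rank}(z'), \quad {\rm rank}(z'-y), \quad {\rm rank}(\vert z'), \quad {\rm rank}(\overline{z'}), \quad {\rm rank}(\ulcorner z')
\]
are computed from the resulting block structure of $z'$. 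Proposition \ref{prop:part3} then identifies the partition cell of $z'$, and the lemma reduces to a finite case analysis over the vanishing patterns of the six blocks of $(u,v)$, with the marginal sizes cross-checked against Lemma \ref{lem:bubbleSize}.

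The main obstacle is the stratum in which $u_1 v_1 \neq 0$ and $u_{2:i}, v_{2:i}$ are both nonzero, since then the top-left $i \times i$ block of $z'$ equals $\mathrm{diag}(\xi, 1, \ldots, 1) + u_{1:i} v_{1:i}^t$, whose determinant $\xi + u_1 v_1 + \xi \sum_{j=2}^{i} u_j v_j$ depends on $\xi$. Individual contributions to the $O^A_{i,i}, O^B_{i,i}, O^C_{i,i}, O^D_{i,i}$ counts may therefore vary with $\xi$; I would resolve this by a change of variables on $(u_1, v_1)$ (essentially absorbing $\xi$ into $u_1$) that bijects the $M_\xi$ strata with the $M_{\xi'}$ strata and cancels the $\xi$-dependence in the totals. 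Once the twelve other table entries are verified, the remaining value $\alpha_i$ can be extracted as $a_1$ minus the sum of the other tabulated entries, yielding the closed form displayed in the lemma.
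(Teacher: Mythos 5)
Your overall strategy is exactly what the paper intends by its one\-/line proof ``by combinatorial counting'': use the $G$-action to reduce to the canonical representatives of Proposition \ref{prop:part2}(iii), parametrize the neighbours as rank-one perturbations $M_\xi+uv^t$, and sort them into cells via the five rank parameters of Proposition \ref{prop:part3}. However, the closing step is wrong as stated. The sixteen table entries account for \emph{all} neighbours of a vertex of $O^B_{i,i}$ (every neighbour has distances to $x$ and to $y$ that each differ from $i$ by at most one, and $x\sim y$ rules out the pairs $(i-1,i+1)$ and $(i+1,i-1)$), so the consistency identity is that the entries sum to the valency $k=\frac{(q^{N-D}-1)(q^D-1)}{q-1}$, not to $a_1$. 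Extracting $\alpha_i$ as ``$a_1$ minus the sum of the others'' would give the wrong value --- indeed a negative one, since already at $i=1$ the other entries sum to $k-(q-3)$, which exceeds $a_1$. (The quantity $a_1$ would be the correct total only for the sub-sum over the cells contained in $\Gamma_1(x)$ in the case $i=1$; for general $i$ the cells inside $\Gamma_i(x)$ sum to $a_i$ and the rest to $b_i+c_i$.)

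Separately, the step you rightly single out as the crux --- independence of the counts from $\xi$ --- is asserted rather than established. Since $\vert M_\xi$, $\overline{M_\xi}$ and $\ulcorner M_\xi$ do not involve $\xi$, only ${\rm rank}(M_\xi+uv^t)$ and ${\rm rank}(M_{\xi-1}+uv^t)$ are at issue; but the naive rescaling $u_1\mapsto cu_1$ replaces the parameter pair $(\xi,\xi-1)$ by $(\xi/c,(\xi-1)/c)$, whose entries no longer differ by $1$, so the substitution you describe does not obviously biject the strata of $M_\xi$ with those of $M_{\xi'}$. This needs to be carried out explicitly, or replaced by a direct count of the solutions of the two determinant conditions (e.g.\ $\xi+u_1v_1+\xi\sum_{j=2}^{i}u_jv_j=0$ and its analogue for $\xi-1$) showing that the totals are $\xi$-free. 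The paper itself supplies no details, so the plan is not off-track, but as written it is a proof outline with one genuine numerical error rather than a proof.
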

\begin{proof} By combinatorial counting.
\end{proof}

\begin{lemma} \label{lem:structureC}
Let $1\leq i \leq D$. For each set in the tables below, we give the number of vertices in the set that are adjacent to a given vertex in $O^C_{i,i}$:
\bigskip

\begin{tabular}{c|c|c|c}

$O_{i-1,i}$ & $O_{i,i-1}$ & $O_{i,i+1}$ & $O_{i+1,i}$ \\
\hline
$q^{2i-2}$&$q^{2i-2}$ & $q^{i-1}(q^D-q^i)$&$q^{i-1}(q^D-q^i)$ \\
\end{tabular}

\bigskip

\begin{tabular}{c|c|c|c}

$O_{i-1,i-1}^A$ & $O_{i-1,i-1}^B$ & $O_{i-1,i-1}^C$ & $O_{i-1,i-1}^D$ \\
\hline
$ 0$ & $0$ & $q^{i-1}\frac{q^{i-1}-1}{q-1}$ & $ 0$  \\

\end{tabular}

\bigskip

\begin{tabular}{c|c|c|c}

$O_{i,i}^A$ & $O_{i,i}^B$ & $O_{i,i}^C$ & $O_{i,i}^D$ \\
\hline
$ q^{i-1}(q^{i-1}-1)$ & $q^{2i-2}(q-2) $ & $\frac{q^i-1}{q-1}(q^{N-D}-q^i-1) + \frac{q^{i-1}-1}{q-1}(q^{D}-q^i) $ & $ 0$  \\

\end{tabular}

\bigskip

\begin{tabular}{c|c|c|c}

$O_{i+1,i+1}^A$ & $O_{i+1,i+1}^B$ & $O_{i+1,i+1}^C$ & $O_{i+1,i+1}^D$ \\
\hline
$q^{i-1} (q^{D}-q^i) $ & $q^{i-1}(q^{D}-q^i)(q-2) $ & $\frac{(q^{D}-q^i)(q^{N-D}-q^{i+1})}{q-1} $ & $ 0$  \\
\end{tabular}

\end{lemma}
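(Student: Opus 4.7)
The plan is to prove Lemma \ref{lem:structureC} by direct combinatorial counting, mirroring the approach used for Lemmas \ref{lem:structure1}, \ref{lem:structureA}, \ref{lem:structureB}. By Lemma \ref{lem:Gfix} it suffices to fix a single representative $z\in O^C_{i,i}$ and count its neighbors in each part; by Proposition \ref{prop:part2}(iv), under the standard assumption we may take $z$ to be the matrix with $z_{1,i+1}=1$ and $z_{j,j}=1$ for $2\leq j\leq i$, and all other entries zero. Every neighbor of $z$ has the form $w=z+uv^T$ with $u\in {\rm GF}(q)^D\setminus\{0\}$ and $v\in {\rm GF}(q)^{N-D}\setminus\{0\}$, pairs identified under $(u,v)\sim(\lambda u,\lambda^{-1}v)$ for $\lambda\in{\rm GF}(q)^{\times}$. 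There are exactly $k$ such classes, matching \eqref{eq:valency}.

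For each neighbor $w$, classify it in the partition by computing the five rank invariants of Proposition \ref{prop:part3}. Writing $u'=u-u_1e_1$ and $v'=v-v_1e_1$ for the truncations that zero out the first coordinate, one has $w-y=(z-y)+uv^T$, $\vert w=z+u(v')^T$, $\overline{w}=\overline{z}+u'v^T$, and $\ulcorner w=\overline{z}+u'(v')^T$, so each of the five ranks depends transparently on how $u,u'$ relate to the column span ${\rm Col}(z)={\rm span}(e_1,\ldots,e_i)\subset{\rm GF}(q)^D$ and how $v,v'$ relate to the row span ${\rm Row}(z)={\rm span}(e_2,\ldots,e_{i+1})\subset{\rm GF}(q)^{N-D}$. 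The rank change caused by a rank-$1$ update is governed by a membership test (is the perturbing column/row vector in the corresponding span?) together with, in the ``double membership'' case, a single scalar coincidence of Sherman--Morrison type. Split the casework by the indicators $[u_1=0]$, $[v_1=0]$, $[u\in{\rm Col}(z)]$, $[v\in{\rm Row}(z)]$, and the analogous indicators for $u',v'$.

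Within each case, enumerate the admissible $(u,v)$-classes: this reduces to choosing vectors with prescribed support inside subspaces of dimension $i$, $D-i$, $i-1$, or $N-D-i$, and dividing by $q-1$ to absorb the scaling equivalence. Distribute the resulting counts across the sixteen parts of the $(x,y)$-partition that can possibly contain a neighbor of $z$. As consistency checks, the partial sums grouped by $(\partial(x,w),\partial(y,w))$ must reproduce the intersection numbers $c_i,a_i,b_i$ of $\Gamma$ from \eqref{eq:cibi}, \eqref{eq:ai}, and the grand total must equal the valency $k$.

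The main obstacle is bookkeeping. The interaction of the indicator splits, the choice of support pattern within each case, and the tie-breaking scalar conditions produces on the order of twenty subcases, each demanding a short but careful rank computation. While $O^C_{i,i}$ is fixed by the $x\leftrightarrow y$ swap (cf.\ the remark following Proposition \ref{prop:parts}), the pairs $O_{i,i-1}\leftrightarrow O_{i-1,i}$ and $O_{i,i+1}\leftrightarrow O_{i+1,i}$ are not pointwise fixed, so the counts in the symmetric columns must be computed independently; their equality, already visible in the stated tables, serves as a built-in sanity check.
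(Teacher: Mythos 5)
Your proposal is correct and follows essentially the same route as the paper, whose entire proof of this lemma is the single phrase ``By combinatorial counting''; your rank-one-update framework (canonical representative from Proposition \ref{prop:part2}(iv), neighbors $z+uv^{T}$, classification via the five rank invariants of Proposition \ref{prop:part3}) is a legitimate and more explicit way to organize that count. The built-in checks you mention do hold: for instance the column sums over $\Gamma_{i-1}(x)$ and $\Gamma_{i+1}(x)$ recover $c_i$ and $b_i$, so the plan is sound as stated.
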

\begin{proof} By combinatorial counting.
\end{proof}

\begin{lemma} \label{lem:structureD}
Let $1\leq i \leq D-1$. For each set in the tables below, we give the number of vertices in the set that are adjacent to a given vertex in $O^D_{i,i}$:
\bigskip

\begin{tabular}{c|c|c|c}

$O_{i-1,i}$ & $O_{i,i-1}$ & $O_{i,i+1}$ & $O_{i+1,i}$ \\
\hline
$q^{2i-2}$&$q^{2i-2}$ & $q^{i-1}(q^{N-D}-q^i)$&$q^{i-1}(q^{N-D}-q^i)$ \\
\end{tabular}

\bigskip

\begin{tabular}{c|c|c|c}

$O_{i-1,i-1}^A$ & $O_{i-1,i-1}^B$ & $O_{i-1,i-1}^C$ & $O_{i-1,i-1}^D$ \\
\hline
$ 0$ & $ 0$ & $0 $ & $q^{i-1}\frac{q^{i-1}-1}{q-1} $  \\

\end{tabular}

\bigskip

\begin{tabular}{c|c|c|c}

$O_{i,i}^A$ & $O_{i,i}^B$ & $O_{i,i}^C$ & $O_{i,i}^D$ \\
\hline
$ q^{i-1}(q^{i-1}-1)$ & $ q^{2i-2}(q-2)$ & $0 $ & $\frac{q^i-1}{q-1}(q^{D}-q^i-1) + \frac{q^{i-1}-1}{q-1}(q^{N-D}-q^i) $  \\

\end{tabular}

\bigskip

\begin{tabular}{c|c|c|c}

$O_{i+1,i+1}^A$ & $O_{i+1,i+1}^B$ & $O_{i+1,i+1}^C$ & $O_{i+1,i+1}^D$ \\
\hline
$ q^{i-1}(q^{N-D}-q^i) $ & $q^{i-1}(q^{N-D}-q^i)(q-2)$ & $ 0$ & $ \frac{(q^{D}-q^{i+1})(q^{N-D}-q^i)}{q-1} $  \\

\end{tabular}
\end{lemma}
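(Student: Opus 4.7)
The strategy is to apply the same $G$-invariant counting technique used for Lemmas \ref{lem:structure1}--\ref{lem:structureC}. By Proposition \ref{prop:part2}(v), the set $O^D_{i,i}$ contains the matrix $z$ whose only nonzero entries are $z_{j+1,j+1}=1$ for $1\le j\le i-1$ together with $z_{i+1,1}=1$. Since the counts sought in each entry of each table depend only on the $G$-orbit of $z$ (this is the content of Lemma \ref{lem:Gfix} combined with distance-transitivity, as invoked in the analogous earlier lemmas), it suffices to enumerate the neighbors of this single representative and classify them according to the rank conditions of Proposition \ref{prop:part3}.

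Every neighbor $w$ of $z$ has the form $w = z + uv^T$ with $u\in\mathrm{GF}(q)^D$ and $v\in\mathrm{GF}(q)^{N-D}$ nonzero, and the pair $(u,v)$ is determined up to the rescaling $(u,v)\mapsto(\lambda u,\lambda^{-1}v)$ for $\lambda\in\mathrm{GF}(q)^{\times}$; this accounts for a total of $k$ neighbors, in agreement with \eqref{eq:valency}. For each target orbit in the tables I would translate the five rank conditions on $w$, $w-y$, $\vert w$, $\overline{w}$, $\ulcorner w$ into linear-algebraic constraints on the coordinates of $u,v$, subdividing according to whether $u_1$, $v_1$, $u_{i+1}$ are zero or nonzero and whether the truncations of $u,v$ to the middle block $2,\ldots,i$ lie in prescribed affine subspaces. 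Routine rank arithmetic then yields the number of valid pairs, and division by $q-1$ produces the tabulated entry.

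The principal obstacle is the sheer number of cases: sixteen target orbits are to be counted, each with its own interaction between the rank-one perturbation $uv^T$ and the three distinguished rows/columns of $z$ (the first row, the first column, and row $i+1$). A substantial simplification is available through the transpose isomorphism $H_q(D,N-D)\to H_q(N-D,D)$, which fixes $x$, fixes $y$, preserves each of $O^A_{i,i}$, $O^B_{i,i}$, $O_{i,i-1}$, $O_{i-1,i}$ setwise, and interchanges $O^C_{i,i}$ with $O^D_{i,i}$ (since transpose swaps $\vert z$ with $\overline{z}$, leaving the remaining rank invariants of Proposition \ref{prop:part3} alone). Under this symmetry, every entry of Lemma \ref{lem:structureD} is the image of the corresponding entry of Lemma \ref{lem:structureC} under the substitution $D\leftrightarrow N-D$, as is verified by direct inspection of each table. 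Thus Lemma \ref{lem:structureD} can be deduced from Lemma \ref{lem:structureC} without redoing the case analysis; the direct combinatorial argument outlined above then serves as an independent verification.
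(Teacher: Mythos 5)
Your direct argument is essentially the paper's proof: the paper disposes of this lemma with ``by combinatorial counting,'' and your outline (pass to the orbit representative from Proposition \ref{prop:part2}(v), write each neighbor as $z+uv^T$ with $(u,v)$ counted up to the scaling $(\lambda u,\lambda^{-1}v)$, and sort neighbors into orbits via the rank invariants of Proposition \ref{prop:part3}) is exactly the computation the authors intend. Your transpose shortcut is a genuinely different route that the paper does not use, and it is substantively correct: transposition swaps $\vert z$ with $\overline z$ while fixing the other three invariants in \eqref{eq:param}, hence fixes $O_{i,i-1}$, $O_{i-1,i}$, $O^A_{i,i}$, $O^B_{i,i}$ and interchanges $O^C_{i,i}\leftrightarrow O^D_{i,i}$; and one can check that every entry of all four tables of Lemma \ref{lem:structureD} is obtained from the corresponding entry of Lemma \ref{lem:structureC} by the substitution $D\leftrightarrow N-D$ together with the relabeling $C\leftrightarrow D$. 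The one point to phrase carefully is that transposition carries $H_q(D,N-D)$ to $H_q(N-D,D)$, which violates the paper's standing hypothesis $N>2D$ when read with its own row/column convention; so you cannot literally cite Lemma \ref{lem:structureC} for the transposed graph, but must instead observe that its proof (the same rank-one perturbation count) is insensitive to which dimension is larger, or equivalently that the symmetry acts on the counting problem by interchanging the roles of rows and columns. With that caveat addressed, the symmetry argument buys a real saving --- it eliminates the sixteen-case analysis for $O^D_{i,i}$ entirely --- while your direct count provides the independent verification the paper implicitly relies on.
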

\begin{proof} By combinatorial counting.
\end{proof}

\begin{lemma} \label{lem:structure6}
Let $1\leq i \leq D$. For each set in the tables below, we give the number of vertices in the set that are adjacent to a given vertex in $O_{i-1,i}$:
\bigskip

\begin{tabular}{c|c|c}

$O_{i-2,i-1}$ & $O_{i,i-1}$ & $O_{i-1,i}$ \\ 
\hline
$q^{i-2}\frac{q^{i-1}-1}{q-1}$& $q^{2i-2}-q^{2i-3}+q^{i-2}$&$\frac{q^{i-1}-1}{q-1}\bigl(q^{N-D}+q^D-3q^{i-1}+q^{i-2}-1\bigr)$ \\

\end{tabular}

\bigskip

\begin{tabular}{c|c|c|c|c}

$O_{i,i+1}$&$O_{i-1,i-1}^A$ & $O_{i-1,i-1}^B$ & $O_{i-1,i-1}^C$ & $O_{i-1,i-1}^D$ \\
\hline
$\frac{(q^{N-D}-q^i)(q^D-q^i)}{q-1}$&$0$& $0$& $q^{i-2}(q^{i-1}-1)$& $q^{i-2}(q^{i-1}-1)$\\

\end{tabular}

\bigskip

\begin{tabular}{c|c|c|c}

$O_{i,i}^A$ & $O_{i,i}^B$ & $O_{i,i}^C$ & $O_{i,i}^D$ \\
\hline
$(q^{i-1}-1)(q^{i-1}-q^{i-2})$& $q^{i-2}(q-2)(q^i-q^{i-1}+1)$& $q^{i-1}(q^{N-D}-q^i)$& $q^{i-1}(q^{D}-q^i)$\\

\end{tabular}
\end{lemma}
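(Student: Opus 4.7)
The plan is to deduce Lemma \ref{lem:structure6} from Lemma \ref{lem:structure1} by exploiting the $x \leftrightarrow y$ symmetry of the set-up rather than redoing the counts from scratch. Under the standard assumption from Assumption \ref{assume}, define $\sigma \colon X \to X$ by $\sigma(z) = y - z$. Since $\sigma(z) - \sigma(w) = w - z$, equation \eqref{eq:Rdist} shows that $\sigma$ preserves rank-differences and hence all path-length distances, so $\sigma \in \mathrm{Aut}(\Gamma)$. Moreover $\sigma(x) = y$ and $\sigma(y) = x$, and consequently $\partial(x,\sigma(z)) = \partial(y,z)$ and $\partial(y,\sigma(z)) = \partial(x,z)$ for every $z \in X$.

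The next step is to track the action of $\sigma$ on the $(x,y)$-partition. As observed immediately after Proposition \ref{prop:parts}, the defining conditions in parts (ii)--(v) are manifestly invariant under swapping $x$ and $y$, while those in (i) and (vi) get exchanged. Therefore $\sigma$ fixes each of $O^A_{j,j}, O^B_{j,j}, O^C_{j,j}, O^D_{j,j}$ setwise and interchanges $O_{j,j-1}$ with $O_{j-1,j}$ for every $j$.

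Now fix $z \in O_{i-1,i}$, so that $\sigma(z) \in O_{i,i-1}$. For any subset $S \subseteq X$, the number of neighbors of $z$ lying in $S$ equals the number of neighbors of $\sigma(z)$ lying in $\sigma(S)$, because $\sigma$ is a graph automorphism. Applying this with $S$ ranging over the partition classes that appear in the tables of Lemma \ref{lem:structure6}, the required counts are obtained from those of Lemma \ref{lem:structure1} by a uniform relabeling of column headers: swap $O_{j,j-1} \leftrightarrow O_{j-1,j}$ and leave each $O^{\ast}_{j,j}$ unchanged. A quick side-by-side comparison verifies that the tables in Lemma \ref{lem:structure6} are precisely the images of those in Lemma \ref{lem:structure1} under this relabeling.

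The main obstacle is not computational but organizational: one must (a) identify an explicit automorphism swapping $x$ and $y$, for which $\sigma(z) = y-z$ works in every characteristic since $q \ne 2$ plays no role here, and (b) confirm by inspection of Proposition \ref{prop:parts} that the middle classes $O^A, O^B, O^C, O^D$ really are $x,y$-symmetric, so that no neighborhood is undercounted. Once these two points are in hand, no combinatorial counting beyond what was already carried out for Lemma \ref{lem:structure1} is needed.
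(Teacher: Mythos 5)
Your proposal is correct and matches the paper's intent: the paper proves Lemma \ref{lem:structure6} by declaring it ``similar to the proof of Lemma \ref{lem:structure1},'' relying on the $x \leftrightarrow y$ symmetry already noted after Proposition \ref{prop:parts} (the swap exchanges $O_{j,j-1}$ with $O_{j-1,j}$ and fixes each $O^{A}_{j,j},\dots,O^{D}_{j,j}$). You merely make that one-line argument rigorous by exhibiting the explicit automorphism $z \mapsto y-z$, which is a worthwhile but not substantively different route.
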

\begin{proof} 
Similar to the proof of Lemma \ref{lem:structure1}.
\end{proof}

\begin{proposition} \label{prop:equit} The $(x,y)$-partition of $X$  is equitable.
\end{proposition}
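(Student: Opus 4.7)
The plan is to deduce Proposition \ref{prop:equit} essentially from Lemmas \ref{lem:structure1}--\ref{lem:structure6}. A partition of $X$ is equitable exactly when, for every ordered pair $(C, C')$ of cells and every $v \in C$, the number $|\Gamma(v) \cap C'|$ depends only on $(C, C')$ and not on the particular $v$. The six structure lemmas tabulate $|\Gamma(v) \cap C'|$ as a single closed-form expression for each of the six cell types, ranging over all relevant $C'$; what remains is to verify (a) that the tabulated values are indeed independent of $v$, and (b) that the tables account for every cell $C'$ containing a neighbor of $v$.

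For (a), I would invoke the group $G = G_{\rm row} \times G_{\rm col} \subseteq \mathrm{Aut}(\Gamma)$ fixing both $x$ and $y$. Every cell of the $(x,y)$-partition is $G$-invariant setwise, since Proposition \ref{prop:part2} writes each cell as a union of two-sided cosets $G_{\rm row} M G_{\rm col}$. By Proposition \ref{prop:part2}(i), (ii), (iv), (v), (vi), five of the six cell types are single $G$-orbits, giving constancy of $|\Gamma(v) \cap C'|$ on these cells for free. The remaining case $O^B_{i,i}$, by Proposition \ref{prop:part2}(iii), is the disjoint union of $q-2$ distinct $G$-orbits $G_{\rm row} M_\xi G_{\rm col}$ for $\xi \in \mathrm{GF}(q) \setminus \{0,1\}$, whose common $(1,1)$-entry $\xi$ is $G$-invariant (no allowed row or column operation can alter the first row or first column of $M_\xi$ in a way that rescales its $(1,1)$-entry). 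Since there is no obvious element of $\mathrm{Aut}(\Gamma)$ fixing $x$ and $y$ that connects all these orbits, constancy across different $\xi$ must be checked by the direct enumeration of Lemma \ref{lem:structureB}; the key observation is that each table entry in that lemma, including the self-count $\alpha_i$, is a polynomial in $q$ with no dependence on $\xi$.

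For (b), the triangle inequality forces $|\partial(x,w) - \partial(x,v)| \le 1$ and $|\partial(y,w) - \partial(y,v)| \le 1$ for any neighbor $w$ of $v$. Combined with Proposition \ref{prop:parts}, this restricts $w$ to the cells whose distance pairs differ from those of $v$ by at most $1$ in each coordinate---precisely the cells appearing in the tables of Lemmas \ref{lem:structure1}--\ref{lem:structure6}. As an arithmetic sanity check, one can verify that in each lemma the table entries sum to $k = (q^{N-D}-1)(q^D-1)/(q-1)$ from \eqref{eq:valency}; this doubles as internal consistency for the individual counts. I expect the main obstacle to be not conceptual but bookkeeping: confirming the $\xi$-independence inside $O^B_{i,i}$ and making sure no neighboring cell is omitted from any table.
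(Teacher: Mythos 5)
Your proposal is correct and is essentially the paper's own proof, which simply cites Lemmas \ref{lem:structure1}--\ref{lem:structure6}; your points (a) and (b) spell out why those tabulations suffice (constancy on each cell and completeness of the list of neighboring cells), which the paper leaves implicit in the phrase ``by combinatorial counting.'' One side remark is inaccurate --- the $(1,1)$-entry of $M_\xi$ is not a $G$-invariant for $i \geq 2$, since the allowed type (iii) operations acting on the first row and first column can alter it after composition --- but this does not affect your argument, because you in any case fall back on the direct enumeration of Lemma \ref{lem:structureB} for the cells $O^B_{i,i}$ rather than on an orbit-transitivity argument.
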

\begin{proof} By Lemmas \ref{lem:structure1}--\ref{lem:structure6}.
\end{proof}

\newpage
\begin{landscape}
\vspace*{4cm}
\begin{tikzpicture}[
    nodeA/.style={circle,draw,minimum size=6mm,inner sep=0pt,font=\tiny},
    nodeB/.style={circle,draw,minimum size=3mm,inner sep=0pt,font=\tiny},
    x=1.3cm, y=1cm
]

\node[nodeA] (v0) at (0,0) {${}$};
\node[nodeA] (v5) at (-2,0) {$y$};
\node[nodeA] (v1) at (-2,5) {$x$};

\node[nodeB] (v2) at (-1,1.25) {$D$};
\node[nodeB] (v3) at (-1,2.5) {$B$};
\node[nodeB] (v4) at (-1,3.75) {$C$};

\node[nodeA] (v6) at (0,5) {${}$};

\node[nodeB] (v7) at (1,1) {$D$};
\node[nodeB] (v8) at (1,2) {$B$};
\node[nodeB] (v9) at (1,3) {$A$};
\node[nodeB] (v10) at (1,4) {$C$};

\node[nodeA] (v11) at (2,0) {${}$};
\node[nodeA] (v12) at (2,5) {${}$};

\node[nodeB] (v13) at (3,1) {$D$};
\node[nodeB] (v14) at (3,2) {$B$};
\node[nodeB] (v15) at (3,3) {$A$};
\node[nodeB] (v16) at (3,4) {$C$};


\node[nodeA] (v17) at (4,0) {${}$};
\node[nodeA] (v18) at (4,5) {${}$};

\node[nodeB] (v19) at (5,1) {$D$};
\node[nodeB] (v20) at (5,2) {$B$};
\node[nodeB] (v21) at (5,3) {$A$};
\node[nodeB] (v22) at (5,4) {$C$};

\node[nodeA] (v23) at (6,0) {${}$};
\node[nodeA] (v24) at (6,5) {${}$};

\node[nodeB] (v25) at (7,1) {$D$};
\node[nodeB] (v26) at (7,2) {$B$};
\node[nodeB] (v27) at (7,3) {$A$};
\node[nodeB] (v28) at (7,4) {$C$};

\node[nodeA] (v29) at (8,0) {${}$};
\node[nodeA] (v30) at (8,5) {${}$};

\node[nodeB] (v31) at (9,1) {$D$};
\node[nodeB] (v32) at (9,2) {$B$};
\node[nodeB] (v33) at (9,3) {$A$};
\node[nodeB] (v34) at (9,4) {$C$};


\node[nodeA] (v35) at (10,0) {${}$};
\node[nodeA] (v36) at (10,5) {${}$};

\node[nodeB] (v37) at (11,1) {$D$};
\node[nodeB] (v38) at (11,2) {$B$};
\node[nodeB] (v39) at (11,3) {$A$};
\node[nodeB] (v40) at (11,4) {$C$};

\node[nodeA] (v41) at (12,0) {${}$};
\node[nodeA] (v42) at (12,5) {${}$};

\node[nodeB] (v43) at (13,1.25) {$B$};
\node[nodeB] (v44) at (13,2.5) {$A$};
\node[nodeB] (v45) at (13,3.75) {$C$};

\foreach \a/\b in {v5/v1, v5/v2,v5/v3,v5/v4,v1/v2,v1/v3,
                   v1/v4, v6/v4,v6/v2,
                   v0/v2,v0/v4,v2/v3,v3/v4,v2/v9,v4/v9,
v12/v10, v12/v7, v12/v13, v12/v14, v12/v15, v12/v16, 
v11/v10, v11/v7, v11/v13, v11/v14, v11/v15, v11/v16,
v10/v9,  v10/v16, v10/v15, v10/v14, 
v9/v8,  v9/v15,  
v8/v7, v8/v14, 
v7/v13,v7/v14,v7/v15,
v16/v15, v15/v14, v14/v13,
v6/v7, v6/v8, v6/v9, v6/v10, 
v0/v7, v0/v8, v0/v9, v0/v10,
v4/v10, v4/v9, v4/v8, 
v2/v9, v2/v8, v2/v7, v3/v8,v6/v12,v0/v11,v5/v0,v1/v6}
{\draw (\a) -- (\b); }

\foreach \a/\b in {v17/v18, v17/v19,v17/v20,v17/v22,v18/v19,v18/v20,
                   v18/v22, v24/v22,v24/v19,
                   v23/v19,v23/v22,v19/v20,v21/v22,v19/v27,v22/v27,
v30/v28, v30/v25, v30/v31, v30/v32, v30/v33, v30/v34, 
v29/v28, v29/v25, v29/v31, v29/v32, v29/v33, v29/v34,
v28/v27,  v28/v34, v28/v33, v28/v32, 
v27/v26,  v27/v33,  
v26/v25, v26/v32, 
v25/v31,v25/v32,v25/v33,
v34/v33, v33/v32, v32/v31,
v24/v25, v24/v26, v24/v27, v24/v28, 
v23/v25, v23/v26, v23/v27, v23/v28,v20/v21,v18/v21,v17/v21,
v22/v28, v22/v27, v22/v26, 
v19/v27, v19/v26, v19/v25, v20/v26,v24/v30,v23/v29,v17/v23,v18/v24}
{\draw (\a) -- (\b); }

\foreach \a/\b in {v35/v36, v35/v37,v35/v38,v35/v40,v36/v37,v36/v38,
                   v36/v40, v42/v40,v42/v37,
                   v41/v37,v41/v40,v37/v38,v37/v44,v40/v44,
v45/v44,  v44/v43, v42/v43, v42/v44, v42/v45, 
 v41/v43, v41/v44, v41/v45,
v40/v45, v40/v44, v40/v43, 
v37/v44, v37/v43, v38/v43,v35/v41,v36/v42,v36/v39,v39/v44,v39/v40,v38/v39,
v0/v6,v11/v12,v23/v24,v29/v30,v41/v42,v21/v27}
{\draw (\a) -- (\b); }

\draw (1.14,2) .. controls (2.1,2.5) and (2.1,3.5) .. (1.14,4);
\draw (1.14,1) .. controls (2.1,1.5) and (2.1,2.5) .. (1.14,3);

\draw (3.1,2.1) .. controls (3.5,2.5) and (3.5,3.5) .. (3.1,3.9);
\draw (3.1,1.1) .. controls (3.5,1.5) and (3.5,2.5) .. (3.1,2.9);

\draw (5.14,2) .. controls (6.1,2.5) and (6.1,3.5) .. (5.14,4);
\draw (5.14,1) .. controls (6.1,1.5) and (6.1,2.5) .. (5.14,3);
\draw (7.14,2) .. controls (8.1,2.5) and (8.1,3.5) .. (7.14,4);
\draw (7.14,1) .. controls (8.1,1.5) and (8.1,2.5) .. (7.14,3);

\draw (9.1,2.1) .. controls (9.5,2.5) and (9.5,3.5) .. (9.1,3.9);
\draw (9.1,1.1) .. controls (9.5,1.5) and (9.5,2.5) .. (9.1,2.9);

\draw (11.14,2) .. controls (12.1,2.5) and (11.7,3.5) .. (11.14,3.9);
\draw (11.14,1.10) .. controls (11.7,1.5) and (12.1,2.5) .. (11.14,2.95);

\draw (13.14,1.3) .. controls (13.5,1.875) and (13.5,3.125) .. (13.14,3.7);

\draw[dashed] (v11)--(v17);
\draw[dashed] (v12)--(v18);
\draw[dashed] (v29)--(v35);
\draw[dashed] (v30)--(v36);

\draw[dashed] (v13)--(v19);
\draw[dashed] (v14)--(v20);
\draw[dashed] (v15)--(v21);
\draw[dashed] (v16)--(v22);

\draw[dashed] (v31)--(v37);
\draw[dashed] (v32)--(v38);
\draw[dashed] (v33)--(v39);
\draw[dashed] (v34)--(v40);

\node[draw=none,below=1.2cm of v17, font=\normalsize] {The equitable partition $U=U(x,y)$ corresponding to a pair of adjacent vertices $x$ and $y$.};

\end{tikzpicture}
\end{landscape}

\newpage

\section{A module $U$ for the subconstituent algebra $T$}

\noindent We continue to discuss the bilinear forms graph $\Gamma=(X, \mathcal R)$ and the adjacent vertices $x,y \in X$.
 In this section, we consider the subconstituent algebra $T=T(x)$ of $\Gamma$ with respect to $x$. 
Recall that $T$ is generated by the adjacency matrix $A$ of $\Gamma$ and the dual primitive idempotents $\lbrace E^*_i \rbrace_{i=0}^D$ of $\Gamma$ with respect to $x$.
We use the $(x,y)$-partition of $X$ to construct a $T$-module $U=U(x,y)$. We decompose $U$ into a direct sum of irreducible $T$-modules.
\medskip


\noindent For a subset $S \subseteq X$ define $\widehat S = \sum_{z \in S} \hat z$. We call $\widehat S$ the {\it characteristic vector} of $S$.
 
 \begin{definition} \label{def:TmodU} \rm For the subsets in the $(x,y)$-partition of $X$ their characteristic vectors form a basis for a $(6D-2)$-dimensional 
 subspace $U=U(x,y)$ of the standard module $V$. We call this
 basis the {\it  $(0/1)$-basis} for $U$. 
 \end{definition}
 
 \begin{lemma} \label{lem:Usym} With reference to Definition \ref{def:TmodU}, we have  $U(x,y)=U(y,x)$.
 \end{lemma}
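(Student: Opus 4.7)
The plan is to invoke the symmetry remark immediately following Proposition~\ref{prop:parts}, and to make it precise as an equality of set-partitions rather than merely of spans. Concretely, I would write $O_{i,i-1}(x,y), O^A_{i,i}(x,y), \ldots, O_{i-1,i}(x,y)$ for the subsets forming the $(x,y)$-partition, as defined in Proposition~\ref{prop:parts}, and similarly for $(y,x)$. The goal is then to check that the collection of subsets defining the $(y,x)$-partition is exactly the collection of subsets defining the $(x,y)$-partition.

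First, using Proposition~\ref{prop:parts}(i) and (vi), I would observe
\begin{align*}
O_{i,i-1}(y,x) &= \Gamma_i(y) \cap \Gamma_{i-1}(x) = O_{i-1,i}(x,y), \\
O_{i-1,i}(y,x) &= \Gamma_{i-1}(y) \cap \Gamma_i(x) = O_{i,i-1}(x,y),
\end{align*}
so the two ``off-diagonal'' families get swapped when $x$ and $y$ are interchanged. Next, I would note that each of the defining conditions in Proposition~\ref{prop:parts}(ii)--(v) is manifestly symmetric in $x$ and $y$: each involves only the set $\Gamma_i(x)\cap\Gamma_i(y)$ together with quantities of the form $|\Gamma(x)\cap\Gamma(y)\cap\Gamma_{i\pm 1}(z)|$, all of which are unchanged under $x\leftrightarrow y$. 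Hence
\begin{align*}
O^A_{i,i}(y,x)=O^A_{i,i}(x,y), \quad O^B_{i,i}(y,x)=O^B_{i,i}(x,y), \quad O^C_{i,i}(y,x)=O^C_{i,i}(x,y), \quad O^D_{i,i}(y,x)=O^D_{i,i}(x,y).
\end{align*}

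Combining these observations, the set of $6D-2$ subsets constituting the $(y,x)$-partition of $X$ coincides with the set of $6D-2$ subsets constituting the $(x,y)$-partition of $X$ (as unordered collections). Therefore their characteristic vectors form the same set, and by Definition~\ref{def:TmodU}, $U(y,x) = U(x,y)$.

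There is no substantive obstacle here; the argument is a bookkeeping verification. The only thing that needs care is confirming that the four ``middle'' conditions (ii)--(v) are genuinely $x$-$y$ symmetric, which is immediate from their statements in Proposition~\ref{prop:parts} since neither $x$ nor $y$ plays a distinguished role in those conditions.
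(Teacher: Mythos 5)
Your proposal is correct and takes essentially the same route as the paper: the paper's proof simply cites the remark preceding Definition~\ref{def:xyPartition}, which asserts exactly the swap of $O_{i,i-1}$ with $O_{i-1,i}$ and the invariance of $O^A_{i,i},O^B_{i,i},O^C_{i,i},O^D_{i,i}$ under $x\leftrightarrow y$ that you verify from Proposition~\ref{prop:parts}. Your write-up just makes that cited remark explicit before concluding that the two unordered collections of subsets, and hence their spans, coincide.
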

 \begin{proof} By the comment above Definition \ref{def:xyPartition}.
 \end{proof}
 
 \begin{lemma}
 \label{lem:EsAction} For $0 \leq j \leq D$ the dual primitive idempotent $E^*_j$ acts on the $(0/1)$-basis for $U$ in the following way:
 \begin{align*}
& E^*_j \widehat O_{i,i-1} = \delta_{i,j} \widehat O_{i,i-1}\quad (1 \leq i \leq D);
 \qquad \qquad
 E^*_j \widehat O_{i,i}^A = \delta_{i,j} \widehat O_{i,i}^A\quad (2 \leq i \leq D);\\
 &
 E^*_j \widehat O_{i,i}^B= \delta_{i,j} \widehat O_{i,i}^B\quad (1 \leq i \leq D);
 \qquad \qquad \qquad
 E^*_j \widehat O_{i,i}^C = \delta_{i,j} \widehat O_{i,i}^C\quad (1 \leq i \leq D); \\
 &
 E^*_j \widehat O_{i,i}^D = \delta_{i,j} \widehat O_{i,i}^D\quad (1 \leq i \leq D-1);
 \qquad \qquad 
 E^*_j \widehat O_{i-1,i} = \delta_{i-1,j} \widehat O_{i-1,i}\quad (1 \leq i \leq D).
 \end{align*}
 \end{lemma}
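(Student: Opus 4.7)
The plan is to observe that each subset in the $(x,y)$-partition sits inside a single distance-subconstituent $\Gamma_j(x)$, so the action of $E^*_j$ on its characteristic vector is immediate from the definition of $E^*_j$.

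More concretely, I would first recall that $E^*_j$ is the diagonal matrix of $\mathrm{Mat}_X(\mathbb C)$ whose $(z,z)$-entry equals $1$ if $\partial(x,z)=j$ and $0$ otherwise, so that $E^*_j \hat z = \hat z$ when $z \in \Gamma_j(x)$ and $E^*_j \hat z = 0$ otherwise. Consequently, for any subset $S \subseteq X$ with $S \subseteq \Gamma_j(x)$, we have $E^*_j \widehat{S} = \widehat{S}$, and $E^*_\ell \widehat{S} = 0$ for every $\ell \neq j$.

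Next I would read off, directly from Proposition \ref{prop:parts}, which $\Gamma_j(x)$ contains each subset of the $(x,y)$-partition. Parts (i)--(v) of that proposition place each of $O_{i,i-1}$, $O^A_{i,i}$, $O^B_{i,i}$, $O^C_{i,i}$, $O^D_{i,i}$ inside $\Gamma_i(x)$, while part (vi) places $O_{i-1,i}$ inside $\Gamma_{i-1}(x)$. Applying the preceding observation with $j$ replaced by $i$ in the first five cases and by $i-1$ in the last, and summing $E^*_j\hat z = \delta_{j,\partial(x,z)} \hat z$ over the vertices in each subset, yields the six displayed identities.

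There is no real obstacle here; the argument is a direct unpacking of definitions. The only thing to be careful about is the indexing in the last line: because $O_{i-1,i} \subseteq \Gamma_{i-1}(x)$, the Kronecker delta must read $\delta_{i-1,j}$ rather than $\delta_{i,j}$, which matches the stated formula.
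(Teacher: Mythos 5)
Your proposal is correct and matches the paper's argument: the paper likewise deduces the identities from the fact that each subset of the $(x,y)$-partition lies in a single $\Gamma_j(x)$, citing Proposition \ref{prop:parts}(i),(vi) together with Lemma \ref{lem:disjUnion} (whereas you read the same containments directly from Proposition \ref{prop:parts}(ii)--(v)). This is the same direct unpacking of the definition of $E^*_j$.
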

 \begin{proof} By Proposition \ref{prop:parts}(i),(vi) and Lemma \ref{lem:disjUnion}.
 \end{proof}

 \begin{lemma} \label{lem:UisTmod} The subspace $U$ from Definition \ref{def:TmodU} is a $T$-module.
 \end{lemma}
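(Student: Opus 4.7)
The plan is to use the fact that the subconstituent algebra $T=T(x)$ is generated by the adjacency matrix $A$ together with the dual primitive idempotents $\{E^*_j\}_{j=0}^D$, so it suffices to verify that the subspace $U$ is invariant under each of these generators.

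Invariance under every $E^*_j$ is immediate from Lemma \ref{lem:EsAction}: each vector in the $(0/1)$-basis for $U$ is sent either to itself or to $0$ by $E^*_j$, hence each $E^*_j$ stabilizes $U$ and even acts diagonally on the $(0/1)$-basis. The substance of the proof is therefore to show that $AU\subseteq U$.

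For this I would invoke the standard fact that if $\pi$ is an equitable partition of $X$ with cells $\{C_\ell\}$, then the span of the characteristic vectors $\{\widehat{C_\ell}\}$ is invariant under the adjacency matrix $A$. The reason is that for $v\in C_j$ one has
\begin{align*}
(A\widehat{C_i})_v \;=\; \sum_{u\in X} A_{v,u}(\widehat{C_i})_u \;=\; |\Gamma(v)\cap C_i|,
\end{align*}
which by equitability depends only on the cell $C_j$ containing $v$; calling this constant $b_{ji}$, we obtain $A\widehat{C_i}=\sum_j b_{ji}\widehat{C_j}\in U$. Since the $(x,y)$-partition is equitable by Proposition \ref{prop:equit}, this applies to our situation, with the constants $b_{ji}$ being exactly the numbers tabulated in Lemmas \ref{lem:structure1}--\ref{lem:structure6}.

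Combining the two invariances, $U$ is stabilized by the generating set $\{A\}\cup\{E^*_j\}_{j=0}^D$ of $T$, and hence by all of $T$. There is no real obstacle here; the only conceptual point is recognizing that the equitability of the $(x,y)$-partition is precisely the statement that $A$ acts on the $(0/1)$-basis via a $(6D-2)\times(6D-2)$ matrix of intersection numbers, i.e.\ that $A$ preserves $U$.
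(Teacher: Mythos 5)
Your proposal is correct and follows exactly the paper's argument: $A$-invariance from the equitability of the $(x,y)$-partition (Proposition \ref{prop:equit}) and $E^*_j$-invariance from Lemma \ref{lem:EsAction}, then conclude since these generate $T$. The only difference is that you spell out the standard fact that equitable partitions yield $A$-invariant spans of characteristic vectors, which the paper leaves implicit.
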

  \begin{proof} By Proposition \ref{prop:equit} we have $A U \subseteq U$. By Lemma \ref{lem:EsAction} we have $E^*_j U \subseteq U$ for $0 \leq j \leq D$.
   By these comments $T U \subseteq U$.
 \end{proof}
 
 \noindent Our next general goal is to decompose $U$ into a direct sum of irreducible $T$-modules.
 
 \begin{lemma} \label{lem:EsU}
 We have
 \begin{align*}
 U = \sum_{i=0}^D E^*_iU \qquad \qquad \hbox{\rm (orthogonal direct sum).}
 \end{align*}
 Moreover the following {\rm (i)--(iv)} hold:
 \begin{enumerate}
 \item[\rm (i)] $E^*_0U$ has basis $\widehat O_{0,1}$;
 \item[\rm (ii)] $E^*_1U$ has basis $\widehat O_{1,0}, \widehat O^B_{1,1}, \widehat O^C_{1,1}, \widehat O^D_{1,1}, \widehat O_{1,2}$;
  \item[\rm (iii)] for $2 \leq i \leq D-1$ the subspace $E^*_iU$ has basis $\widehat O_{i,i-1}, \widehat O^A_{i,i}, \widehat O^B_{i,i}, \widehat O^C_{i,i}, \widehat O^D_{i,i}, \widehat O_{i,i+1}$;
   \item[\rm (iv)] $E^*_D U$ has basis $\widehat O_{D,D-1}, \widehat O^A_{D,D}, \widehat O^B_{D,D}, \widehat O^C_{D,D}$.
 \end{enumerate}
 \end{lemma}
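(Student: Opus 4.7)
The orthogonal direct sum decomposition is essentially inherited from the ambient decomposition of the standard module. The plan is to first observe that since $I=\sum_{i=0}^D E^*_i$, any $u\in U$ satisfies $u=\sum_{i=0}^D E^*_iu$, with each summand lying in $U$ because $U$ is a $T$-module (Lemma \ref{lem:UisTmod}) and each $E^*_i\in T$. The orthogonality $E^*_iU\perp E^*_jU$ for $i\ne j$ follows immediately from the ambient orthogonal sum $V=\sum_{i=0}^D E^*_iV$, which in turn follows from $\langle \hat u,\hat v\rangle=0$ whenever $u\in\Gamma_i(x)$, $v\in \Gamma_j(x)$ with $i\ne j$. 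This establishes the display.

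For parts (i)--(iv), I would invoke Lemma \ref{lem:EsAction}, which tells us that each $E^*_j$ acts on the $(0/1)$-basis for $U$ diagonally, fixing a given characteristic vector $\widehat S$ if $S\subseteq\Gamma_j(x)$ and killing it otherwise. Thus $E^*_jU$ is spanned by precisely those characteristic vectors in the $(0/1)$-basis whose defining subset lies in $\Gamma_j(x)$. Reading off Proposition \ref{prop:parts}:
\begin{itemize}
\item[(i)] The only subset inside $\Gamma_0(x)=\{x\}$ is $O_{0,1}=\Gamma_0(x)\cap\Gamma_1(y)=\{x\}$, giving $\widehat O_{0,1}=\hat x$.
\item[(ii)] The subsets contained in $\Gamma_1(x)$ are $O_{1,0}$ (i.e.\ $\{y\}$), the three subsets $O^B_{1,1},O^C_{1,1},O^D_{1,1}$ partitioning $\Gamma(x)\cap\Gamma(y)$ (Lemma \ref{lem:disjUnion}(i); note $O^A_{1,1}$ is not defined in Proposition \ref{prop:parts}), and $O_{1,2}$.
\item[(iii)] For $2\le i\le D-1$, the subsets contained in $\Gamma_i(x)$ are $O_{i,i-1}$, the four subsets $O^A_{i,i},O^B_{i,i},O^C_{i,i},O^D_{i,i}$ partitioning $\Gamma_i(x)\cap\Gamma_i(y)$ (Lemma \ref{lem:disjUnion}(ii)), and $O_{i,i+1}$.
\item[(iv)] For $i=D$ there is no $O_{D,D+1}$ and $O^D_{D,D}$ is not defined, leaving $O_{D,D-1}$ together with the three subsets $O^A_{D,D},O^B_{D,D},O^C_{D,D}$ partitioning $\Gamma_D(x)\cap\Gamma_D(y)$ (Lemma \ref{lem:disjUnion}(iii)).
\end{itemize}

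In each case the listed characteristic vectors are linearly independent because they are part of the $(0/1)$-basis of $U$, which consists of characteristic vectors of mutually disjoint nonempty sets. The step that requires the most care is simply bookkeeping of the index ranges in Proposition \ref{prop:parts} at the boundaries $i=0,1,D$, where certain sets from the general family are absent; apart from this there is no genuine obstacle, since everything reduces to the elementary observation about how $E^*_j$ acts on a characteristic vector of a subset of $\Gamma_i(x)$.
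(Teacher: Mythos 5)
Your proposal is correct and follows essentially the same route as the paper, whose proof is simply an appeal to Lemma \ref{lem:EsAction}: the dual idempotents act diagonally on the $(0/1)$-basis, so one reads off the basis of each $E^*_iU$ from the index ranges in Proposition \ref{prop:parts}. Your additional bookkeeping at the boundary cases $i=0,1,D$ and the remark on orthogonality inherited from $V=\sum_{i=0}^D E^*_iV$ just make explicit what the paper leaves implicit.
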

\begin{proof} By Lemma  \ref{lem:EsAction}.
\end{proof}

 \begin{definition} \rm We refer to Lemma \ref{lem:EsU}. For $0 \leq i \leq D$, the given basis for $E^*_iU$  is called the {\it  $(0/1)$-basis}.
 \end{definition}

 \noindent Recall the matrices $L,F,R$ from  \eqref{eq:LFR}.
By construction,
\begin{align*}
&L E^*_i U \subseteq E^*_{i-1} U \qquad \quad (1 \leq i \leq D), \qquad L E^*_0U =0, \\
& F E^*_i U \subseteq E^*_i U \qquad \quad (0 \leq i \leq D), \\
&R E^*_i U \subseteq E^*_{i+1} U \qquad \quad (0 \leq i \leq D-1), \qquad RE^*_D U=0.
\end{align*}

\begin{lemma} \label{lem:Frep} For $0 \leq i \leq D$ we give the matrix ${\sf F}_i$ that represents the action of $F$ on $E^*_iU$ with respect to the $(0/1)$-basis.
We have ${\sf F}_0=0$ and
\begin{align*}
{\sf F}_1=\begin{pmatrix} 0&q-2&q^{N-D}-q&q^D-q&0 \\
                         1&q-3&q^{N-D}-q&q^D-q&0 \\
                          1&q-2&q^{N-D}-q-1&0&q^D-q \\
                         1 &q-2&0&q^D-q-1&q^{N-D}-q \\
                          0&0&q-1&q-1&q^{N-D}+q^D-3q
                          \end{pmatrix}.
 \end{align*}
 \noindent For $2 \leq i \leq D-1$ we have ${\sf F}_i=$
\begin{tiny}
 \begin{align*}
  \begin{pmatrix}  f^{(1)}_i &(q^{i-1}-1)(q^{i-1}-q^{i-2})&q^{i-2}(q-2)(q^i-q^{i-1}+1)&q^{i-1} (q^{N-D}-q^i)&q^{i-1} (q^D-q^i) &0 \\
                          q^{i-1}(q^{i-1}-q^{i-2}) &f^{(2)}_i&q^{2i-3}(q-1)(q-2)&q^{i-1}(q^{N-D}-q^i)&q^{i-1}(q^D-q^i)&0\\
                           q^{i-2}(q^i-q^{i-1}+1)&q^{i-2}(q-1)(q^{i-1}-1)&f^{(3)}_i&q^{i-1}(q^{N-D}-q^i)&q^{i-1}(q^D-q^i)&0\\
                          q^{2i-2} &q^{i-1}(q^{i-1}-1)&q^{2i-2}(q-2)&f^{(4)}_i&0&q^{i-1}(q^D-q^i)\\
                          q^{2i-2}&q^{i-1}(q^{i-1}-1)&q^{2i-2}(q-2)&0&f^{(5)}_i&q^{i-1}(q^{N-D}-q^i)\\
                           0&0&0&q^{i-1}(q^i-1)&q^{i-1}(q^i-1)&f^{(6)}_i
                           \end{pmatrix},
\end{align*} 
\end{tiny}
\noindent where
\begin{align*}
f^{(1)}_i &= \frac{q^{i-1}-1}{q-1} \Bigl( q^{N-D}+q^D-3q^{i-1}+q^{i-2}-1     \Bigr), \\
f^{(2)}_i &=  \frac{q^{i-1}-1}{q-1} \Bigl( q^{N-D}+q^D-3q^{i-1}+q^{i-2}-2     \Bigr) + \frac{q^{i-2}-1}{q-1}, \\
f^{(3)}_i &=  \frac{q^{i-1}-1}{q-1} \Bigl( q^{N-D}+q^D+q^{i+1}-5q^i+4q^{i-1}-2q^{i-2}+q^2-4q+2     \Bigr) + q-3, \\
f^{(4)}_i &= \frac{ q^i-1}{q-1} \Bigl( q^{N-D}-q^i-1\Bigr) + \frac{q^{i-1}-1}{q-1} \Bigl( q^D-q^i\Bigr), \\
f^{(5)}_i &= \frac{ q^i-1}{q-1} \Bigl( q^{D}-q^i-1\Bigr) + \frac{q^{i-1}-1}{q-1} \Bigl( q^{N-D}-q^i\Bigr), \\
f^{(6)}_i &= \frac{q^{i}-1}{q-1} \Bigl( q^{N-D}+q^D-3q^{i}+q^{i-1}-1     \Bigr).
\end{align*}
We have ${\sf F}_D=$
\begin{footnotesize}
\begin{align*}
\begin{pmatrix}f^{(1)}_D &(q^{D-1}-1)(q^{D-1}-q^{D-2})&q^{D-2}(q-2)(q^D-q^{D-1}+1)&q^{D-1} (q^{N-D}-q^D)\\
                          q^{D-1}(q^{D-1}-q^{D-2}) &f^{(2)}_D&q^{2D-3}(q-1)(q-2)&q^{D-1}(q^{N-D}-q^D)\\
                           q^{D-2}(q^D-q^{D-1}+1)&q^{D-2}(q-1)(q^{D-1}-1)&f^{(3)}_D&q^{D-1}(q^{N-D}-q^D)\\
                          q^{2D-2} &q^{D-1}(q^{D-1}-1)&q^{2D-2}(q-2)&f^{(4)}_D\\
                          \end{pmatrix}.
 \end{align*}
 \end{footnotesize}
\end{lemma}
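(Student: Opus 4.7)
The plan is to extract the matrix entries directly from the counts tabulated in Lemmas \ref{lem:structure1}--\ref{lem:structure6}, using the equitability of the $(x,y)$-partition (Proposition \ref{prop:equit}) as the bridge. More precisely, for any cell $O$ of the $(x,y)$-partition contained in $\Gamma_i(x)$, the characteristic vector $\widehat O$ lies in $E^*_i V$, so that $F \widehat O = E^*_i A E^*_i \widehat O = E^*_i A \widehat O$. Writing $A\widehat O = \sum_w |\{z\in O : z\sim w\}|\,\hat w$ and grouping $w$ by the cell $O'$ it lies in, equitability gives
\begin{align*}
A \widehat O = \sum_{O'} \beta(O',O)\,\widehat{O'},
\end{align*}
where $\beta(O',O)$ is the common number of neighbors in $O$ of any vertex in $O'$. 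Applying $E^*_i$ kills the terms with $O' \not\subseteq \Gamma_i(x)$, so by Lemma \ref{lem:EsU} only the cells listed in the $(0/1)$-basis of $E^*_iU$ survive. Consequently the $(r,c)$-entry of ${\sf F}_i$ is exactly $\beta(O^r, O^c)$, where $O^r, O^c$ are the $r$-th and $c$-th basis cells.

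Next I would match each $\beta(O^r, O^c)$ to a concrete number appearing in the tables of Lemmas \ref{lem:structure1}--\ref{lem:structure6}. By construction these lemmas tabulate, for a given base cell $O^r$, the quantities $\beta(O^r, O')$ as $O'$ ranges over the other cells; this is exactly what is needed to fill in row $r$ of ${\sf F}_i$, restricted to the columns $c$ corresponding to cells within $\Gamma_i(x)$. The diagonal entries $f^{(k)}_i$ are simply the shorthand names for the within-cell counts that already appear in the tables: for example $f^{(1)}_i$ is the $O_{i,i-1}$-to-$O_{i,i-1}$ count from Lemma \ref{lem:structure1}, $f^{(2)}_i$ is the $O^A_{i,i}$-to-$O^A_{i,i}$ count from Lemma \ref{lem:structureA}, and $f^{(3)}_i$ is the quantity called $\alpha_i$ in Lemma \ref{lem:structureB}.

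The remaining work is to handle the boundary cases $i=0,1,D$. For $i=0$ the cell $\{x\} = O_{0,1}$ has no neighbors in $\Gamma_0(x)$, so ${\sf F}_0=0$ immediately. For $i=1$ the cell $O^A_{1,1}$ is absent (it is only defined for $2\leq i\leq D$), shrinking the basis to five vectors; the entries of ${\sf F}_1$ can be read either from the tables or, more efficiently, by substituting $i=1$ into the formulas of the general $2\leq i\leq D-1$ case and deleting the $O^A$ row and column. For $i=D$ the cells $O^D_{D,D}$ and $O_{D,D+1}$ are absent, giving a $4\times 4$ matrix obtained analogously.

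The calculations behind each table entry are elementary combinatorial counts, completed in the statements of Lemmas \ref{lem:structure1}--\ref{lem:structure6}; the main obstacle is purely organizational, namely making sure every entry of each ${\sf F}_i$ is correctly assigned to the right row and column (since the tables list neighbors of a vertex in a fixed cell, reading by rows, whereas columns of ${\sf F}_i$ correspond to the source cell $O^c$), and carefully verifying the boundary truncations. No additional theoretical input is required beyond the equitability proven in Proposition \ref{prop:equit}.
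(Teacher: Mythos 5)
Your proposal is correct and takes essentially the same route as the paper, whose proof is simply a citation of Lemmas \ref{lem:structure1}--\ref{lem:structure6}; your observation that row $r$ of ${\sf F}_i$ is read off from the table for the cell $O^r$ (justified by equitability and $F\widehat{O}=E^*_iA\widehat{O}$ for $\widehat{O}\in E^*_iV$), with the cases $i=0,1,D$ handled by truncating the basis, is exactly the bookkeeping that citation leaves implicit. The only point left tacit is that cells omitted from a given table (e.g.\ $O_{i,i+1}$ in Lemma \ref{lem:structure1}) contribute zero entries by the triangle inequality with respect to $y$, which accounts for the zeros in the displayed matrices.
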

\begin{proof}
By Lemmas \ref{lem:structure1}--\ref{lem:structure6}.
\end{proof}
 
 \begin{lemma} \label{lem:Rrep} For $0 \leq i \leq D-1$ we give the matrix ${\sf R}_i$ that represents the action of $R:E^*_iU \to E^*_{i+1}U$ with respect to the $(0/1)$-bases.
 We have
  \begin{align*}
{\sf R}_0 =  \begin{pmatrix} 1&1&1&1&1 
                          \end{pmatrix}^t
 \end{align*}
and
 \begin{align*}
{\sf R}_1 =  \begin{pmatrix} 1&0&q-1&q-1&q^2-q+1\\
                          0&0&q&q&q(q-1)\\
                         0& 1& q-1&q-1& q^2-q+1\\
                          0&0&q&0&q^2\\
                          0&0&0&q&q^2\\
                          0&0&0&0&q(q+1)
                          \end{pmatrix}.
 \end{align*}
\noindent  For $2 \leq i \leq D-2$ we have
 \begin{align*}
{\sf R}_i =  \begin{pmatrix} q^{i-1}\frac{q^i-1}{q-1}&0&0&q^{i-1}(q^i-1)&q^{i-1}(q^i-1)&q^{2i}-q^{2i-1}+q^{i-1}\\
                          0&q^{i}\frac{q^{i-1}-1}{q-1}&0&q^{2i-1}&q^{2i-1}&q^i(q^i-q^{i-1})\\
                         0&0& q^{i-1}\frac{q^i-1}{q-1}&q^{i-1}(q^i-1)&q^{i-1}(q^i-1)&q^{i-1}(q^{i+1}-q^i+1)\\
                          0&0&0&q^{i}\frac{q^i-1}{q-1}&0&q^{2i}\\
                          0&0&0&0&q^{i}\frac{q^i-1}{q-1}&q^{2i}\\
                          0&0&0&0&0&q^{i}\frac{q^{i+1}-1}{q-1}
                          \end{pmatrix}.
 \end{align*}
 \noindent We have ${\sf R}_{D-1} =$
 \begin{footnotesize}
  \begin{align*}
\begin{pmatrix} 
 q^{D-2}\frac{q^{D-1}-1}{q-1}&0&0&q^{D-2}(q^{D-1}-1)&q^{D-2}(q^{D-1}-1)&q^{2D-2}-q^{2D-3}+q^{D-2}\\
                          0&q^{D-1}\frac{q^{D-2}-1}{q-1}&0&q^{2D-3}&q^{2D-3}&q^{D-1}(q^{D-1}-q^{D-2})\\
                          0&0& q^{D-2}\frac{q^{D-1}-1}{q-1}&q^{D-2}(q^{D-1}-1)&q^{D-2}(q^{D-1}-1)&q^{D-2}(q^{D}-q^{D-1}+1)\\
                          0&0&0&q^{D-1}\frac{q^{D-1}-1}{q-1}
&0&q^{2D-2}\\
                          \end{pmatrix}.
 \end{align*}
 \end{footnotesize}
 \end{lemma}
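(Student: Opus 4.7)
The strategy is to combine the equitability of the $(x,y)$-partition (Proposition \ref{prop:equit}) with the adjacency counts from Lemmas \ref{lem:structure1}--\ref{lem:structure6}. By \eqref{eq:LFR}, for any subset $S \subseteq \Gamma_i(x)$ in the $(x,y)$-partition we have $R\widehat S = E^*_{i+1}A\widehat S$, and for $w \in \Gamma_{i+1}(x)$ the $w$-coordinate of $A\widehat S$ is $|\Gamma(w) \cap S|$. By equitability, this count depends only on the subset $T$ of the $(x,y)$-partition that contains $w$; call this common value $n_{T,S}$. Then
\begin{align*}
R\widehat S = \sum_{T} n_{T,S}\,\widehat T,
\end{align*}
where $T$ ranges over the level-$(i{+}1)$ subsets of the $(x,y)$-partition. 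Hence the $(T,S)$-entry of ${\sf R}_i$ equals $n_{T,S}$, namely the number of vertices in $S$ adjacent to a fixed vertex of $T$.

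The numbers $n_{T,S}$ are supplied by Lemmas \ref{lem:structure1}--\ref{lem:structure6}, each of which lists, for a vertex in a given type of subset, the number of its neighbors in every surrounding subset. For each row of ${\sf R}_i$ (indexed by a level-$(i{+}1)$ subset $T$) I apply the corresponding lemma with its index parameter set to $i{+}1$, and read off the entries for the columns corresponding to level-$i$ subsets $S$. Boundary cases are handled analogously: for $i=0$ the subset $O_{0,1} = \{x\}$ has $R\hat x = \sum_{w \in \Gamma(x)}\hat w$, which decomposes as the sum of the five level-$1$ characteristic vectors, giving ${\sf R}_0$; for $i=1$ the column $\widehat O^A_{1,1}$ is absent because that subset is not defined; and for $i=D-1$ the rows $\widehat O^D_{D,D}$ and $\widehat O_{D,D+1}$ are absent by Lemma \ref{lem:disjUnion}(iii).

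The main obstacle is purely clerical: one must consistently shift indices in each cited lemma from $i$ to $i{+}1$, correctly match the entries in those tables to the level-$i$ columns of ${\sf R}_i$, and verify entry-by-entry that the data assemble into the displayed matrices. No additional structural argument is required beyond Proposition \ref{prop:equit}; in particular, the apparent complexity of the entries (e.g., the $q$-polynomial expressions appearing in ${\sf R}_i$) is simply inherited verbatim from the tabulated counts in Lemmas \ref{lem:structure1}--\ref{lem:structure6}.
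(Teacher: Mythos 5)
Your proposal is correct and follows the paper's own (very terse) proof, which simply cites Lemmas \ref{lem:structure1}--\ref{lem:structure6}: the $(T,S)$-entry of ${\sf R}_i$ is the number of vertices of the level-$i$ cell $S$ adjacent to a fixed vertex of the level-$(i+1)$ cell $T$, well defined by Proposition \ref{prop:equit} and read off from the appropriate structure lemma with its index shifted to $i+1$. Your handling of the boundary cases $i=0$, $i=1$, $i=D-1$ is also consistent with Lemmas \ref{lem:disjUnion} and \ref{lem:EsU}.
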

 \begin{proof}
By Lemmas \ref{lem:structure1}--\ref{lem:structure6}.
\end{proof}

 \begin{lemma} For $1 \leq i \leq D$ we give the  matrix ${\sf L}_i$ that represents the action of $L:E^*_iU \to E^*_{i-1}U$ with respect to the $(0/1)$-bases.
 We have
 \begin{landscape}
  \begin{align*} {\sf L}_1= \begin{pmatrix} 
 1&q-2&q^{N-D}-q&q^D-q&\frac{(q^{N-D}-q)(q^D-q)}{q-1}\end{pmatrix}
\end{align*} 
 and 
 \begin{align*} 
 {\sf L}_2=
 \begin{pmatrix} 
          \frac{(q^{N-D}-q)(q^D-q)}{q-1} &0&0&0&0&0\\
          0 &0& \frac{(q^{N-D}-q)(q^D-q)}{q-1}&0&0&0\\
          q^D-q &q^D-q&(q-2)(q^D-q)& \frac{(q^{N-D}-q^2)(q^D-q)}{q-1}&0&0\\
          q^{N-D}-q&q^{N-D}-q&(q-2)(q^{N-D}-q)&0& \frac{(q^{N-D}-q)(q^D-q^2)}{q-1}&0\\
         q^2-q+1   &(q-1)^2&(q-2)(q^2-q+1)   &q(q^{N-D}-q^2)&q(q^D-q^2)& \frac{(q^{N-D}-q^2)(q^D-q^2)}{q-1}\end{pmatrix}.
\end{align*}
\noindent For $3 \leq i \leq D-1$ we have 
\begin{footnotesize}
\begin{align*}
{\sf L}_i = 
 \begin{pmatrix} 
          \frac{(q^{N-D}-q^{i-1})(q^D-q^{i-1})}{q-1} &0&0&0&0&0\\
           0& \frac{(q^{N-D}-q^{i-1})(q^D-q^{i-1})}{q-1}&0&0&0&0\\
          0 &0& \frac{(q^{N-D}-q^{i-1})(q^D-q^{i-1})}{q-1}&0&0&0\\
          q^{i-2}(q^D-q^{i-1}) &q^{i-2}(q^D-q^{i-1})&q^{i-2}(q-2)(q^D-q^{i-1})& \frac{(q^{N-D}-q^{i})(q^D-q^{i-1})}{q-1}&0&0\\
          q^{i-2}(q^{N-D}-q^{i-1})&q^{i-2}(q^{N-D}-q^{i-1})&q^{i-2}(q-2)(q^{N-D}-q^{i-1})&0& \frac{(q^{N-D}-q^{i-1})(q^D-q^{i})}{q-1}&0\\
         q^{2i-2}-q^{2i-3}+q^{i-2}   &(q^{i-1}-1)(q^{i-1}-q^{i-2})&q^{i-2}(q-2)(q^i-q^{i-1}+1)   &q^{i-1}(q^{N-D}-q^{i})&q^{i-1}(q^D-q^{i})& \frac{(q^{N-D}-q^{i})(q^D-q^{i})}{q-1}
\end{pmatrix}.
\end{align*}
\end{footnotesize}
\noindent We have 
\begin{align*}
{\sf L}_D=
 \begin{pmatrix} 
           (q^{N-D}-q^{D-1})q^{D-1} &0&0&0\\
           0& (q^{N-D}-q^{D-1})q^{D-1} &0&0\\
          0 &0& (q^{N-D}-q^{D-1})q^{D-1} &0\\
          q^{D-2}(q^D-q^{D-1}) &q^{D-2}(q^D-q^{D-1})&q^{D-2}(q-2)(q^D-q^{D-1})& (q^{N-D}-q^{D})q^{D-1} \\
          q^{D-2}(q^{N-D}-q^{D-1})&q^{D-2}(q^{N-D}-q^{D-1})&q^{D-2}(q-2)(q^{N-D}-q^{D-1})&0\\
         q^{2D-2}-q^{2D-3}+q^{D-2}   &(q^{D-1}-1)(q^{D-1}-q^{D-2})&q^{D-2}(q-2)(q^D-q^{D-1}+1)   &q^{D-1}(q^{N-D}-q^{D})
\end{pmatrix}.
\end{align*}
\end{landscape}
\end{lemma}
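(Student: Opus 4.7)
The strategy mirrors that of Lemmas \ref{lem:Frep} and \ref{lem:Rrep}: I would translate the representing-matrix entries into bubble-adjacency counts and read those off from Lemmas \ref{lem:structure1}--\ref{lem:structure6}. From \eqref{eq:LFR}, for $z \in O \subseteq \Gamma_i(x)$ one has $L\hat z = \sum_{w \in \Gamma(z) \cap \Gamma_{i-1}(x)} \hat w$; summing over $z \in O$ and collecting terms by the bubbles $O' \subseteq \Gamma_{i-1}(x)$ of the $(x,y)$-partition gives
\begin{align*}
L\widehat{O} \;=\; \sum_{O' \subseteq \Gamma_{i-1}(x)} \bigl|\Gamma(w) \cap O\bigr|\,\widehat{O'},
\end{align*}
for any $w \in O'$, the count being independent of $w$ by Proposition \ref{prop:equit}. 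Hence the $(O',O)$-entry of ${\sf L}_i$ is the number of vertices of $O$ adjacent to a prescribed vertex of $O'$.

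To obtain this entry I would apply the structure lemma whose source bubble (the one whose adjacencies are tabulated) coincides with $O'$, after the appropriate shift of the parameter $i$: Lemma \ref{lem:structure1}, \ref{lem:structureA}, \ref{lem:structureB}, \ref{lem:structureC}, \ref{lem:structureD}, or \ref{lem:structure6} according as $O'$ is $O_{i-1,i-2}$, $O^A_{i-1,i-1}$, $O^B_{i-1,i-1}$, $O^C_{i-1,i-1}$, $O^D_{i-1,i-1}$, or $O_{i-1,i}$. Alternatively, the same count can be recovered from its transpose via the double-counting identity $|O|\,c_{O,O'}=|O'|\,c_{O',O}$ combined with Lemma \ref{lem:bubbleSize}; for instance the $(1,1)$-entry $(q^{N-D}-q^{i-1})(q^D-q^{i-1})/(q-1)$ of ${\sf L}_i$ is extracted from the $O_{i+1,i}$-column of Lemma \ref{lem:structure1} after the substitution $i \mapsto i-1$.

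The main obstacle is not mathematical but organizational: one must match each row of ${\sf L}_i$ to the correct structure lemma and index shift, each column to the correct subset of $\Gamma_i(x)$ in the order fixed by Lemma \ref{lem:EsU}, and handle the boundary cases $i=1,2,D$ where certain bubbles ($O^A_{1,1}$, $O^D_{D,D}$) are absent and where $E^*_0U$ is one-dimensional, which explains why ${\sf L}_1$ has only one row and ${\sf L}_D$ only four columns. Once these matchings are set up, the verification reduces to transcription, and the entries of ${\sf L}_1$, ${\sf L}_2$, ${\sf L}_i$ for $3 \leq i \leq D-1$, and ${\sf L}_D$ are read off directly.
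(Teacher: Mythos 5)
Your proposal is correct and follows essentially the same route as the paper: the paper's proof is precisely "By Lemmas \ref{lem:structure1}--\ref{lem:structure6}," i.e., each $(O',O)$-entry of ${\sf L}_i$ is the number of vertices of $O\subseteq\Gamma_i(x)$ adjacent to a fixed vertex of $O'\subseteq\Gamma_{i-1}(x)$, read from the structure lemma for $O'$ with the index shifted to $i-1$. Your identification of the representing-matrix convention, the index shift, and the boundary cases ($O^A_{1,1}$ and $O^D_{D,D}$ absent, $\dim E^*_0U=1$, $\dim E^*_DU=4$) all match what the paper intends.
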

\begin{proof}
By Lemmas \ref{lem:structure1}--\ref{lem:structure6}.
\end{proof}

\noindent Next, for $0 \leq i \leq D$ we diagonalize the matrix  ${\sf F}_i$ from Lemma \ref{lem:Frep}. This will be done using a matrix ${\sf H}_i$ that we now define.

\begin{definition} \label{def:Hi} \rm For $0 \leq i \leq D$ we define a matrix ${\sf H}_i$ as follows. We  have ${\sf H}_0=(1)$ and
\begin{align*}
{\sf H}_1= \begin{pmatrix} 1&\frac{q^D-q}{q-1}&\frac{q^{N-D}-q}{q-1}&\frac{(q^D-1)(q^{N-D}-1)(q-2)}{(q-1)^2}& \frac{(q^D-q)(q^{N-D}-q)}{(q-1)^2}\\
                               1&\frac{q^D-q}{q-1}&\frac{q^{N-D}-q}{q-1}&-\frac{(q^D-1)(q^{N-D}-1)}{(q-1)^2}& \frac{(q^D-q)(q^{N-D}-q)}{(q-1)^2}\\
                               1&\frac{q^D-q}{q-1}&-1&0&- \frac{q^D-q}{q-1}\\
                               1&-1&\frac{q^{N-D}-q}{q-1}&0&-\frac{q^{N-D}-q}{q-1}\\
                               1&-1&-1&0&1
                               \end{pmatrix}.
\end{align*}
For $2 \leq i \leq D-1$ we have
\begin{footnotesize}
 \begin{align*}
 {\sf H}_i = \begin{pmatrix} 1&q^{1-i}\frac{q^D-q^i}{q-1}&q^{1-i}\frac{q^{N-D}-q^i}{q-1}&\frac{(q-2)(q^D-1)(q^{N-D}-1)}{q^{i-1}(q-1)^2}&\frac{q^D-q^i}{q-1} \frac{q^{N-D}-q^i}{q-1}&-\frac{q^D-1}{q-1} \frac{q^{N-D}-1}{q-1}\frac{q^{i-1}-1}{q-1}\\
                                 1&q^{1-i}\frac{q^D-q^i}{q-1}&q^{1-i}\frac{q^{N-D}-q^i}{q-1}&0&\frac{q^D-q^i}{q-1}\frac{q^{N-D}-q^i}{q-1}&q^{i-1} \frac{q^D-1}{q-1} \frac{q^{N-D}-1}{q-1}\\
                                 1&q^{1-i}\frac{q^D-q^i}{q-1}&q^{1-i}\frac{q^{N-D}-q^i}{q-1}&-\frac{(q^D-1)(q^{N-D}-1)}{q^{i-1}(q-1)^2}&\frac{q^D-q^i}{q-1}\frac{q^{N-D}-q^i}{q-1}&-\frac{q^D-1}{q-1} \frac{q^{N-D}-1}{q-1}\frac{q^{i-1}-1}{q-1}\\
                                 1&q^{1-i}\frac{q^D-q^i}{q-1}&-q^{1-i} \frac{q^i-1}{q-1}&0&-\frac{q^D-q^i}{q-1}\frac{q^i-1}{q-1}&0\\
                                 1&-q^{1-i} \frac{q^i-1}{q-1}&q^{1-i}\frac{q^{N-D}-q^i}{q-1}&0&-\frac{q^i-1}{q-1}\frac{q^{N-D}-q^i}{q-1}&0\\
                                 1&-q^{1-i} \frac{q^i-1}{q-1}&-q^{1-i} \frac{q^i-1}{q-1}&0&\Bigl(\frac{q^i-1}{q-1}\Bigr)^2&0
                                 \end{pmatrix}.
                                 \end{align*}
                                 \end{footnotesize}
\noindent We have
\begin{align*}
{\sf H}_D= \begin{pmatrix} 1&q^{1-D}\frac{q^{N-D}-q^D}{q-1}&\frac{(q-2)(q^D-1)(q^{N-D}-1)}{q^{D-1}(q-1)^2}&-\frac{q^D-1}{q-1} \frac{q^{N-D}-1}{q-1}\frac{q^{D-1}-1}{q-1}\\
                                 1&q^{1-D}\frac{q^{N-D}-q^D}{q-1}&0&q^{D-1} \frac{q^D-1}{q-1} \frac{q^{N-D}-1}{q-1}\\
                                 1&q^{1-D}\frac{q^{N-D}-q^D}{q-1}&-\frac{(q^D-1)(q^{N-D}-1)}{q^{D-1}(q-1)^2}&-\frac{q^D-1}{q-1} \frac{q^{N-D}-1}{q-1}\frac{q^{D-1}-1}{q-1}\\
                                 1&-q^{1-D} \frac{q^D-1}{q-1}&0&0
                               \end{pmatrix}.
\end{align*}            
                                                     \end{definition}
                                 
 \begin{lemma} \label{lem:Hinv} For $0 \leq i \leq D$ the matrix ${\sf H}_i$ is invertible, and ${\sf H}^{-1}_i $ is described as follows. We have
 ${\sf H}^{-1}_0 = (1)$ and
 \begin{align*}
 {\sf H}^{-1}_1 = 
 \frac{1}{k} \begin{pmatrix}
   1&q-2&q^{N-D}-q&q^D-q& \frac{(q^{N-D}-q)(q^D-q)}{q-1}\\
   1&q-2&q^{N-D}-q&1-q&q-q^{N-D}\\
   1&q-2&1-q&q^D-q&q-q^D\\
   1&-1&0&0&0\\
   1&q-2&1-q&1-q&q-1
 \end{pmatrix}.
 \end{align*}
 For $2 \leq i \leq D-1$ we have ${\sf H}_i^{-1} = $
 \begin{footnotesize}
 \begin{align*}
 \frac{1}{k}\begin{pmatrix}
 q^{i-1} \frac{q^i-1}{q-1} &\frac{(q^i-1)(q^{i-1}-1)}{q-1}&(q-2)q^{i-1}\frac{q^i-1}{q-1}&\frac{(q^i-1)(q^{N-D}-q^i)}{q-1}&\frac{(q^i-1)(q^D-q^i)}{q-1}&\frac{(q^D-q^i)(q^{N-D}-q^i)}{q-1}\\
 q^{2i-2}&q^{i-1}(q^{i-1}-1)&(q-2)q^{2i-2} &q^{i-1}(q^{N-D}-q^i)&-q^{i-1}(q^i-1)&-q^{i-1}(q^{N-D}-q^i)\\
  q^{2i-2}&q^{i-1}(q^{i-1}-1)&(q-2)q^{2i-2} &-q^{i-1}(q^i-1)&q^{i-1}(q^D-q^i)&-q^{i-1}(q^D-q^i)\\
 q^{i-1} &0&-q^{i-1}&0&0&0\\
 q^{i-1}\frac{q-1}{q^i-1}&\frac{(q-1)(q^{i-1}-1)}{q^i-1}&\frac{(q-1)(q-2)q^{i-1}}{q^i-1}&1-q&1-q&q-1\\
-\frac{q-1}{q^i-1} &\frac{(q-1)^2}{q^i-1}&- \frac{(q-1)(q-2)}{q^i-1}&0&0&0
 \end{pmatrix}.
 \end{align*}
 \end{footnotesize}
  We have
   \begin{align*}
 {\sf H}^{-1}_D = 
 \frac{1}{k} \begin{pmatrix}
 q^{D-1} \frac{q^D-1}{q-1} &\frac{(q^D-1)(q^{D-1}-1)}{q-1}&(q-2)q^{D-1}\frac{q^D-1}{q-1}&\frac{(q^D-1)(q^{N-D}-q^D)}{q-1}\\
  q^{2D-2}&q^{D-1}(q^{D-1}-1)&(q-2)q^{2D-2} &-q^{D-1}(q^D-1)\\
 q^{D-1} &0&-q^{D-1}&0\\
-\frac{q-1}{q^D-1} &\frac{(q-1)^2}{q^D-1}&- \frac{(q-1)(q-2)}{q^D-1}&0
 \end{pmatrix}.
 \end{align*}
 \end{lemma}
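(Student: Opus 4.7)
The plan is to verify the claim by direct matrix multiplication: show that ${\sf H}_i \, {\sf H}_i^{-1}$ equals the identity matrix of the appropriate size, which simultaneously establishes invertibility and the formula for the inverse. For $i=0$ this is immediate. For $1 \le i \le D$, I first observe that the columns of ${\sf H}_i$ are designed to be simultaneous eigenvectors of ${\sf F}_i$ corresponding to the five local eigenvalues from Lemma \ref{lem:localSpec}; since the eigenvalues of the local graph $\Gamma(x)$ are distinct (under the assumption $N>2D\geq 6$), the columns are linearly independent and so ${\sf H}_i$ is invertible.

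To obtain the stated formula for ${\sf H}_i^{-1}$, I would carry out the block-by-block product, exploiting the common factor $k^{-1}$ on the left of ${\sf H}_i^{-1}$ together with the valency $k=(q^{N-D}-1)(q^D-1)/(q-1)$ from \eqref{eq:valency}. The diagonal entries of the product should reduce to $k/k = 1$, while the off-diagonal entries telescope to zero. Throughout this computation, the identities
\[
q^a - q^b = q^b\bigl(q^{a-b}-1\bigr), \qquad \frac{q^a-q^b}{q-1}=q^b\cdot\frac{q^{a-b}-1}{q-1}
\]
are repeatedly useful for collecting like terms among the $q$-powers.

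A useful organizing principle is the partition of the rows/columns of ${\sf H}_i$ into the five spectral blocks dictated by Lemma \ref{lem:localSpec}: the trivial eigenspace (giving the all-ones first column), the two ``medium'' eigenspaces of dimensions $(q^D-q)/(q-1)$ and $(q^{N-D}-q)/(q-1)$, and the two ``large'' ones of dimensions $(q^D-1)(q^{N-D}-1)(q-2)/(q-1)^2$ and $(q^D-q)(q^{N-D}-q)/(q-1)^2$. Rows of ${\sf H}_i^{-1}$ appearing with a given spectral label should annihilate columns of ${\sf H}_i$ of every other label; this interpretation trims most of the cases to inspection.

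The principal obstacle is simply the volume of the algebra: in the generic range $2 \leq i \leq D-1$ there are $36$ entries of a $6 \times 6$ product to verify, and some off-diagonal entries involve the cancellation of up to six summands with competing $q$-powers. No conceptual difficulty arises, however; with the two identities above and routine bookkeeping separating the cases $i=1$, $2\leq i\leq D-1$, and $i=D$ (which differ only by the dropping or coalescing of the ``A''-block corresponding to $O^A_{i,i}$ and the ``D''-block corresponding to $O^D_{i,i}$ at the endpoints), the verification goes through uniformly.
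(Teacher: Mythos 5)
Your proposal is correct and is essentially the paper's own proof, which simply cites ``Definition \ref{def:Hi} and linear algebra,'' i.e.\ the routine verification that ${\sf H}_i$ times the displayed matrix is the identity. The only quibble is your framing of the generic case in terms of the \emph{five} local eigenvalues of Lemma \ref{lem:localSpec}: for $2\leq i\leq D-1$ the matrix ${\sf F}_i$ has \emph{six} distinct eigenvalues $\lambda^{(1)}_i,\dots,\lambda^{(6)}_i$ (Lemma \ref{lem:Hdiag}), but this remark is not load-bearing since the direct product computation already yields both invertibility and the formula.
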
    
 \begin{proof}                            
 By Definition \ref{def:Hi} and linear algebra.
 \end{proof}

 \begin{lemma} \label{lem:Hdiag} For $0 \leq i \leq D$ the product ${\sf H}^{-1}_i {\sf F}_i {\sf H}_i = {\sf D}_i$, where ${\sf D}_i$ is the following diagonal matrix.
 We have ${\sf D}_0=0$ and
 \begin{align*}
 {\sf D}_1 = {\rm diag}\bigl( q^{N-D}+q^D-q-2, q^{N-D}-q-1, q^D-q-1, -1,-q \bigr).
 \end{align*}
 For $2 \leq i \leq D-1$ we have
 \begin{align*}
 {\sf D}_i = {\rm diag}\bigl(\lambda^{(1)}_i, \lambda^{(2)}_i, \lambda^{(3)}_i, \lambda^{(4)}_i, \lambda^{(5)}_i, \lambda^{(6)}_i \bigr)
 \end{align*}
 where
 \begin{align*}
 \lambda^{(1)}_i &= a_i =    \frac{q^i -1}{q-1} \Bigl           (q^{N-D}  +q^D   -q^i - q^{i-1}-1\Bigr),\\
 \lambda^{(2)}_i &= a_i - q^{i-1} (q^D-1) \\
 &= \frac{q^i-1}{q-1} \Bigl(q^{N-D}-q^i-1\Bigr) + \frac{q^{i-1}-1}{q-1} \Bigl( q^D-q^i \Bigr ), \\
 \lambda^{(3)}_i &= a_i - q^{i-1} (q^{N-D}-1)\\
 &=  \frac{q^{i-1}-1}{q-1} \Bigl(q^{N-D}-q^i\Bigr) + \frac{q^i-1}{q-1} \Bigl( q^D-q^i -1 \Bigr ), 
  \\
 \lambda^{(4)}_i &= a_i - q^{i-1} \bigl(q^{N-D}+q^D-q^i-q^{i-1}  \bigr) \\
 &=  \frac{q^{i-1}-1}{q-1} \Bigl(q^{N-D}+q^D-q^i-q^{i-1}\Bigr) -\frac{q^i
-1}{q-1} 
 , \\
 \lambda^{(5)}_i &= a_i - q^{i-1} \bigl( q^{N-D}+q^D-2\bigr) \\
 &=  \frac{q^{i-1}-1}{q-1} \Bigl(q^{N-D}+q^D-q^i-1\Bigr) -q^i\frac{q^i
-1}{q-1} , \\
 \lambda^{(6)}_i &= a_i - q^{i-1} \bigl( q^{N-D}+q^D-q^i-q^{i-1}-1+q^{-1}  \bigr) \\
 &=  \frac{q^{i-1}-1}{q-1} \Bigl(q^{N-D}+q^D-q^i-1\Bigr) -q^{i-2}\frac{q^i
-1}{q-1}.
 \end{align*}
\noindent We have
  \begin{align*}
 {\sf D}_D = {\rm diag}\bigl(\lambda^{(1)}_D, \lambda^{(3)}_D, \lambda^{(4)}_D, \lambda^{(6)}_D \bigr).
 \end{align*}
 \end{lemma}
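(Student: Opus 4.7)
The plan is to verify the matrix identity ${\sf F}_i {\sf H}_i = {\sf H}_i {\sf D}_i$ column by column, thereby exhibiting each column of ${\sf H}_i$ as an eigenvector of ${\sf F}_i$ with the eigenvalue prescribed by the corresponding diagonal entry of ${\sf D}_i$. Since Lemma \ref{lem:Hinv} already provides an explicit formula for ${\sf H}^{-1}_i$, invertibility of ${\sf H}_i$ is in hand, so diagonalization follows once the six (or four) eigenvector identities are established.

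I would start with the first column of ${\sf H}_i$, which by inspection of Definition \ref{def:Hi} is the all-ones vector $(1,1,\ldots,1)^t$ in every case. The claim is that this vector is an eigenvector of ${\sf F}_i$ with eigenvalue $\lambda^{(1)}_i=a_i$. There is a conceptual reason: the sum of the $(0/1)$-basis vectors spanning $E^*_i U$ equals the characteristic vector $\widehat{\Gamma_i(x)}$, and because $\Gamma$ is distance-regular, every vertex of $\Gamma_i(x)$ has exactly $a_i$ neighbors in $\Gamma_i(x)$. Equivalently, each row of ${\sf F}_i$ sums to $a_i$, which can be read off directly from the entries in Lemma \ref{lem:Frep} and the formulas in \eqref{eq:cibi}, \eqref{eq:ai}.

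For the case $i=1$, there is additional structure that can be exploited: the matrix ${\sf F}_1$ represents the restriction of $F=E^*_1 A E^*_1$, which is the adjacency matrix of the local graph $\Gamma(x)$, to the $T$-invariant subspace $E^*_1 U$. Hence the spectrum of ${\sf F}_1$ must be contained in the spectrum of $\Gamma(x)$ listed in Lemma \ref{lem:localSpec}, namely $\{a_1,\ q^{N-D}-q-1,\ q^D-q-1,\ -1,\ -q\}$, and these are exactly the diagonal entries of ${\sf D}_1$. For $2\le i\le D-1$ (and analogously for $i=D$, where two rows and columns are suppressed since $O^D_{D,D}$ and $O_{D,D+1}$ do not occur), one verifies the eigenvector identity for each of the remaining columns of ${\sf H}_i$ by direct multiplication against ${\sf F}_i$, expanding $f^{(1)}_i,\ldots,f^{(6)}_i$ using Lemma \ref{lem:Frep} and rewriting each entry as a polynomial in $q$, $q^i$, $q^D$, $q^{N-D}$.

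The main obstacle is purely computational: once the definitions are unpacked, each of the roughly thirty eigenvector-coordinate checks reduces to a polynomial identity in the four variables $q,q^i,q^D,q^{N-D}$, and the manipulations are lengthy but mechanical. The alternative presentations of $\lambda^{(j)}_i$ in the statement, namely the forms $a_i - q^{i-1}(\cdots)$, are designed to mirror the structure of the rows of ${\sf F}_i$ so that in each eigenvector relation, one side factors in a transparent way and the comparison with the other side is immediate. No deeper structural ingredient beyond Lemma \ref{lem:Frep}, Definition \ref{def:Hi}, and routine algebra is required; a careful organized calculation (readily confirmed by symbolic computation) suffices.
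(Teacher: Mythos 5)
Your proposal is correct and matches the paper's approach: the paper's proof of this lemma is simply ``By linear algebra,'' i.e.\ a direct computational verification, and your column-by-column check of ${\sf F}_i{\sf H}_i={\sf H}_i{\sf D}_i$ (using the invertibility of ${\sf H}_i$ from Lemma \ref{lem:Hinv}) is exactly that computation, supplemented by correct consistency checks via the all-ones eigenvector and the local spectrum of Lemma \ref{lem:localSpec}.
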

  \begin{proof}                            
 By  linear algebra.
 \end{proof}
 
 \begin{lemma} \label{lem:HiRH} For $0 \leq i \leq D-1$ the product ${\sf H}^{-1}_{i+1} {\sf R}_i {\sf H}_i$ is given as follows.
We have
   \begin{align*}
 {\sf H}^{-1}_{1} {\sf R}_0 {\sf H}_0= 
 \begin{pmatrix}
	1&0&0&0&0 \\
 \end{pmatrix}^t
 \end{align*}
and
 \begin{align*}
 {\sf H}^{-1}_{2} {\sf R}_1 {\sf H}_1= 
 \begin{pmatrix}
          q(q+1) &0&0&0&0 \\
           0&q^2&0&0&0\\
           0&0&q^2&0&0\\
          0&0&0&q&0\\
           0&0&0&0&\frac{q}{q+1}\\
           0&0&0&0&\frac{-1}{q+1}
 \end{pmatrix}.
 \end{align*}
 \noindent
 For $2 \leq i \leq D-2$ we have
 \begin{align*}
 {\sf H}^{-1}_{i+1} {\sf R}_i {\sf H}_i= 
 \begin{pmatrix}
          q^i\frac{q^{i+1}-1}{q-1} &0&0&0&0&0 \\
           0&q^{i+1}\frac{q^i-1}{q-1}&0&0&0&0\\
           0&0&q^{i+1}\frac{q^i-1}{q-1}&0&0&0\\
          0 &0&0&q^i\frac{q^i-1}{q-1}&0&0\\
           0&0&0&0&q^i\frac{(q^i-1)^2}{(q^{i+1}-1)(q-1)}&0\\
           0&0&0&0&-q^{i-1}\frac{q-1}{q^{i+1}-1}&q^{i-1} \frac{q^{i-1}-1}{q-1}
 \end{pmatrix}.
 \end{align*}
  We have
  \begin{align*}
 {\sf H}^{-1}_{D} {\sf R}_{D-1} {\sf H}_{D-1}= 
 \begin{pmatrix}
         q^{D-1}\frac{q^D-1}{q-1} &0&0&0&0&0 \\
           0&0&q^D\frac{q^{D-1}-1}{q-1}&0&0&0\\
          0 &0&0&q^{D-1}\frac{q^{D-1}-1}{q-1}&0&0\\
           0&0&0&0&-q^{D-2}\frac{q-1}{q^D-1}&q^{D-2} \frac{q^{D-2}-1}{q-1}
 \end{pmatrix}.
 \end{align*}
 \end{lemma}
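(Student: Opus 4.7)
The plan is to verify the asserted identity by direct matrix multiplication, exploiting the fact that all three factors $\mathsf{H}_{i+1}^{-1}$, $\mathsf{R}_i$, $\mathsf{H}_i$ have been explicitly tabulated in Lemma \ref{lem:Rrep}, Definition \ref{def:Hi}, and Lemma \ref{lem:Hinv}. Since the target matrix changes shape at the boundaries of the range $0 \leq i \leq D-1$, I would split the argument into four cases: $i=0$, $i=1$, $2 \leq i \leq D-2$, and $i=D-1$, treating each one separately.

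Before grinding through the entries, I would record the conceptual picture that explains why the answer has the displayed shape. The columns of $\mathsf{H}_i$ are eigenvectors of $\mathsf{F}_i$ corresponding to the five eigenvalues $\lambda^{(r)}_i$ (with $\lambda^{(1)}_i$ the primary eigenvalue and $\lambda^{(2)}_i,\ldots,\lambda^{(6)}_i$ the endpoint--one eigenvalues attached to the four nonprimary irreducible $T$-module classes). Since the $(x,y)$-partition is equitable and $U$ decomposes as a direct sum of irreducible $T$-modules with a canonical bijection between their local eigenspaces in consecutive $E^*_iU$, the raising map $R$ must carry each column of $\mathsf{H}_i$ into the span of at most one column of $\mathsf{H}_{i+1}$, except where two isomorphism classes have the same local eigenvalue and $R$ is therefore allowed a genuine $2\times 2$ block. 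Inspection of the eigenvalues $\lambda^{(r)}_i$ shows that $\lambda^{(5)}_i$ and $\lambda^{(6)}_i$ collide in this sense, which is exactly where the displayed formula exhibits its one nonzero subdiagonal entry. This observation predicts the support pattern of $\mathsf{H}_{i+1}^{-1} \mathsf{R}_i \mathsf{H}_i$ and tells me which entries actually require honest computation.

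Given the pattern, the plan is first to verify that $\mathsf{R}_i$ applied to each of the five columns $\mathsf{H}_i e_r$ with $r \in \{1,2,3,4\}$ produces a scalar multiple of the corresponding column of $\mathsf{H}_{i+1}$. This reduces to checking, for each such $r$, the single identity $\mathsf{R}_i \mathsf{H}_i e_r = \mu^{(r)}_i \, \mathsf{H}_{i+1} e_r$ in $E^*_{i+1}U$, where $\mu^{(r)}_i$ is the diagonal entry predicted by the lemma. Each of these is a column identity with six scalar entries and can be verified using the explicit formulas for $c_i,b_i,k_i$ in \eqref{eq:cibi} together with routine simplification of polynomials in $q$. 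The remaining work is the $2\times 2$ block coming from the fifth and sixth columns, where I expect $\mathsf{R}_i \mathsf{H}_i e_5$ and $\mathsf{R}_i \mathsf{H}_i e_6$ to lie in the span of $\mathsf{H}_{i+1} e_5$ and $\mathsf{H}_{i+1} e_6$, with coefficients given by the displayed $2\times 2$ block.

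The main obstacle will be this last $2 \times 2$ block in the generic range $2 \leq i \leq D-2$, together with the analogous transitional behaviour at $i=1$ (where the columns of $\mathsf{H}_1$ have a different layout than $\mathsf{H}_i$ for $i \geq 2$) and at $i=D-1$ (where $\mathsf{H}_D$ loses two columns, so the $\lambda^{(2)}$- and $\lambda^{(5)}$-eigenspaces disappear and a portion of the image must collapse to zero). In the interior case, after computing $\mathsf{R}_i \mathsf{H}_i e_6$ and expressing the result in the $\mathsf{H}_{i+1}$ basis via $\mathsf{H}_{i+1}^{-1}$, I would verify that the coefficients $-q^{i-1}(q-1)/(q^{i+1}-1)$ and $q^{i-1}(q^{i-1}-1)/(q-1)$ come out by carefully cancelling the common factor $(q^{i+1}-1)$ that appears in the last row of $\mathsf{H}_{i+1}^{-1}$. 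The boundary cases are handled by the same calculation with one or two rows and columns suppressed, and by checking that the suppressed contributions indeed vanish because the corresponding eigenvector of $\mathsf{F}_i$ is annihilated by the raising map at the top of the tube.
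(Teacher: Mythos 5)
Your proposal matches the paper's proof, which is simply the assertion that the product follows ``by linear algebra'' from the explicit matrices in Lemma \ref{lem:Rrep}, Definition \ref{def:Hi}, and Lemma \ref{lem:Hinv}; your case split ($i=0$, $i=1$, $2\leq i\leq D-2$, $i=D-1$) and column-by-column verification is exactly the intended computation. One caveat: your heuristic explanation of the $2\times 2$ block is off --- the eigenvalues $\lambda^{(5)}_i$ and $\lambda^{(6)}_i$ do \emph{not} coincide; the block arises because columns $5$ and $6$ span the two-dimensional space $E^*_iW_5$ of the single nonthin module $W_5$ (a fact only established later in Theorem \ref{thm:Udecomp}, so it could not be used non-circularly here anyway) --- but since your actual verification proceeds by direct multiplication, this misstatement does not affect correctness.
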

   \begin{proof}                            
 By  linear algebra.
 \end{proof}


  \begin{lemma} \label{lem:HiLH} For $1 \leq i \leq D$ the product ${\sf H}^{-1}_{i-1} {\sf L}_i {\sf H}_i$ is given as follows.
 We have
 \begin{landscape}
   \begin{align*}
{\sf H}^{-1}_{0} {\sf L}_1 {\sf H}_1 = \begin{pmatrix}
  \frac{(q^D-1)(q^{N-D}-1)}{q-1}&0&0&0&0\\
\end{pmatrix}
\end{align*}
\noindent and 
  \begin{align*}
  {\sf H}^{-1}_{1} {\sf L}_2 {\sf H}_2 =
  \begin{pmatrix}
 \frac{(q^D-q)(q^{N-D}-q)}{q-1}&0&0&0&0&0\\
0&\frac{(q^D-q^2)(q^{N-D}-q)}{q(q-1)}&0&0&0&0\\
0&0&\frac{(q^D-q)(q^{N-D}-q^2)}{q(q-1)}&0&0&0\\
0&0&0&\frac{(q^D-q)(q^{N-D}-q)}{q(q-1)}&0&0\\
0&0&0&0&\frac{(q^D-q^2)(q^{N-D}-q^2)}{q-1}&-\frac{(q^D-1)(q^{N-D}-1)}{q-1}\\
\end{pmatrix}.
\end{align*}
 \noindent
 For $3 \leq i \leq D-1$ we have 
 \begin{footnotesize}
 \begin{align*}
{\sf H}^{-1}_{i-1} {\sf L}_i {\sf H}_i=
\begin{pmatrix}
\frac{(q^D-q^{i-1})(q^{N-D}-q^{i-1})}{q-1}&0&0&0&0&0\\
0&\frac{(q^D-q^{i})(q^{N-D}-q^{i-1})}{q(q-1)}&0&0&0&0\\
0&0&\frac{(q^D-q^{i-1})(q^{N-D}-q^{i})}{q(q-1)}&0&0&0\\
0&0&0&\frac{(q^D-q^{i-1})(q^{N-D}-q^{i-1})}{q(q-1)}&0&0\\
0&0&0&0&\frac{(q^D-q^{i})(q^{N-D}-q^{i})}{q-1}&-q^{i-2}\frac{(q^D-1)(q^{N-D}-1)}{q^{i-1}-1}\\
0&0&0&0&0&\frac{(q^i-1)(q^D-q^{i-1})(q^{N-D}-q^{i-1})}{(q^{i-1}-1)(q-1)}
\end{pmatrix}.
\end{align*}
\end{footnotesize}
\noindent We have 
 \begin{align*}
{\sf H}^{-1}_{D-1} {\sf L}_D {\sf H}_D =
  \begin{pmatrix}
  q^{D-1}(q^{N-D}-q^{D-1})&0&0&0\\
0&0&0&0\\
0&q^{D-2}(q^{N-D}-q^D)&0&0\\
0&0&q^{D-2}(q^{N-D}-q^{D-1})&0\\
0&0&0&-q^{D-2}\frac{(q^D-1)(q^{N-D}-1)}{q^{D-1}-1}\\
0&0&0&q^{D-1}\frac{(q^D-1)(q^{N-D}-q^{D-1})}{q^{D-1}-1}
\end{pmatrix}.
\end{align*}
\end{landscape}
 \end{lemma}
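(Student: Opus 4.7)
The plan is to verify each of the stated formulas for ${\sf H}_{i-1}^{-1}{\sf L}_i {\sf H}_i$ by direct matrix multiplication, using the explicit formulas for ${\sf L}_i$ from the preceding lemma, for ${\sf H}_i$ from Definition~\ref{def:Hi}, and for ${\sf H}_{i-1}^{-1}$ from Lemma~\ref{lem:Hinv}. The four ranges $i=1$, $i=2$, $3 \leq i \leq D-1$, and $i=D$ must be treated separately because the matrix sizes differ at the boundaries of the index range.

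For each range I would first compute the product ${\sf L}_i {\sf H}_i$ column by column. The columns of ${\sf H}_i$ have a highly structured form built from quantities like $\frac{q^D-q^i}{q-1}$ and $\frac{q^{N-D}-q^i}{q-1}$, together with rational factors involving $\frac{q^i-1}{q-1}$, so each column of ${\sf L}_i {\sf H}_i$ simplifies via standard $q$-algebraic identities. I would then left-multiply by ${\sf H}_{i-1}^{-1}$ using the form given in Lemma~\ref{lem:Hinv}; the rows of ${\sf H}_{i-1}^{-1}$ are the coefficient functionals of the eigenbasis for ${\sf F}_{i-1}$, and their pairing with the columns of ${\sf L}_i {\sf H}_i$ should produce the diagonal or near-diagonal patterns stated.

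A useful structural check motivates the expected form. By Lemma~\ref{lem:Hdiag}, the columns of ${\sf H}_i$ correspond to eigenvalues $\lambda^{(r)}_i$ of ${\sf F}_i$. If $L$ were to carry each eigenvector of ${\sf F}_i$ into an eigenvector of ${\sf F}_{i-1}$ with the same label $r$, the product ${\sf H}_{i-1}^{-1}{\sf L}_i {\sf H}_i$ would come out purely diagonal. The calculation confirms this except in the pair of columns labelled $r=5$ and $r=6$ for $3 \leq i \leq D-1$ (and correspondingly in the boundary ranges $i=2$ and $i=D$), where a single off-diagonal entry of the form $-q^{i-2}(q^D-1)(q^{N-D}-1)/(q^{i-1}-1)$ appears. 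This mirrors the analogous off-diagonal entry in Lemma~\ref{lem:HiRH} for ${\sf H}_{i+1}^{-1}{\sf R}_i{\sf H}_i$ and reflects the fact that the basis in Definition~\ref{def:Hi} is chosen to diagonalize $F$ rather than to match the $L$-action on $F$-eigenvectors.

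The main obstacle is pure bookkeeping: verifying that many cancellations in rational expressions involving $q^i$, $q^D$, and $q^{N-D}$ collapse to the succinct closed forms on the right-hand side. No new ideas beyond linear algebra are required, but the calculation is easiest to carry out by organizing rows and columns by the eigenvalue label $r$, noting that most off-diagonal entries vanish by design, and then checking the few remaining entries either by hand or with a computer algebra system.
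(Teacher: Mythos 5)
Your proposal is correct and takes essentially the same route as the paper, whose entire proof of this lemma is ``By linear algebra'': a direct verification by multiplying the explicit matrices ${\sf H}_{i-1}^{-1}$, ${\sf L}_i$, ${\sf H}_i$ from the preceding results, with the boundary cases $i=1,2,D$ handled separately. Your additional structural remark about why the product is diagonal except for the single $(5,6)$-type entry is a reasonable sanity check but is not needed for the argument.
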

  \begin{proof}                            
 By  linear algebra.
 \end{proof}

 \noindent We are now ready to decompose the $T$-module $U$ into a direct sum of irreducible $T$-modules.
 
 \begin{theorem}\label{thm:Udecomp} The $T$-module $U$ is an orthogonal direct sum of irreducible $T$-modules
 \begin{align*}
   U = W_1 + W_2 + W_3 + W_4 +W_5.
   \end{align*}
 The $\lbrace W_i \rbrace_{i=1}^5$ are described as follows.
 \begin{enumerate}
\item[\rm (i)] $W_1$ is the primary $T$-module. It is thin, with endpoint $0$ and diameter $D$. It has a basis $\lbrace w_i \rbrace_{i=0}^D$ such that
\begin{align*}
 w_i & \in E^*_i U \qquad \qquad (0 \leq i \leq D), \\
 R w_i &= c_{i+1} w_{i+1} \qquad (0 \leq i \leq D-1), \qquad Rw_D=0, \\
 L w_i &=  b_{i-1} w_{i-1} \qquad (1 \leq i \leq D), \qquad Lw_0=0,\\
 F w_i &= a_i w_i \qquad \qquad (0 \leq i \leq D).
 \end{align*}
 \item[\rm (ii)] $W_2$ is thin, with endpoint $1$, diameter $D-2$, and local eigenvalue $q^{N-D}-q-1$. It has a basis $\lbrace w_i \rbrace_{i=1}^{D-1}$ such that
 \begin{align*}
 w_i & \in E^*_i U \qquad \qquad (1 \leq i \leq D-1), \\
 R w_i &= q^{i+1} \frac{q^i-1}{q-1} w_{i+1} \qquad (1 \leq i \leq D-2), \qquad Rw_{D-1}=0, \\
 L w_i &=  
 \frac{(q^D-q^i)(q^{N-D}-q^{i-1})}{q(q-1)} w_{i-1} \qquad (2 \leq i \leq D-1), \qquad Lw_1=0,\\
 F w_i &= \Bigl(a_i - q^{i-1} \bigl(q^D-1\bigr) \Bigr)w_i \qquad \qquad (1 \leq i \leq D-1).
 \end{align*}
 \item[\rm (iii)] $W_3$ is thin, with endpoint $1$, diameter $D-1$, and local eigenvalue $q^D-q-1$.  It has a basis $\lbrace w_i \rbrace_{i=1}^{D}$ such that
 \begin{align*}
 w_i & \in E^*_i U \qquad \qquad (1 \leq i \leq D), \\
 R w_i &= q^{i+1} \frac{q^i-1}{q-1} w_{i+1} \qquad (1 \leq i \leq D-1), \qquad Rw_{D}=0, \\
 L w_i &=  
 \frac{(q^D-q^{i-1})(q^{N-D}-q^{i})}{q(q-1)} w_{i-1} \qquad (2 \leq i \leq D), \qquad Lw_1=0,\\
 F w_i &= \Bigl(a_i - q^{i-1} \bigl(q^{N-D}-1\bigr) \Bigr)w_i \qquad \qquad (1 \leq i \leq D).
 \end{align*}
\item[\rm (iv)] $W_4$ is thin, with endpoint $1$, diameter $D-1$, and local eigenvalue $-1$. It has a basis $\lbrace w_i \rbrace_{i=1}^{D}$ such that
 \begin{align*}
 w_i & \in E^*_i U \qquad \qquad (1 \leq i \leq D), \\
 R w_i &= q^{i} \frac{q^i-1}{q-1} w_{i+1} \qquad (1 \leq i \leq D-1), \qquad Rw_{D}=0, \\
 L w_i &=  
 \frac{(q^D-q^{i-1})(q^{N-D}-q^{i-1})}{q(q-1)} w_{i-1} \qquad (2 \leq i \leq D), \qquad Lw_1=0,\\
 F w_i &= \Bigl(a_i - q^{i-1} \bigl(q^{N-D}+q^D-q^i-q^{i-1}\bigr) \Bigr)w_i \qquad \qquad (1 \leq i \leq D).
 \end{align*}
\item[\rm (v)] $W_5$ is nonthin, with endpoint $1$, diameter $D-1$, and local eigenvalue $-q$. It has a basis $\lbrace w^-_i \rbrace_{i=1}^{D-1} \cup  \lbrace w^+_i \rbrace_{i=2}^{D}$ such that
 \begin{align*}
 w^-_i & \in E^*_i U \qquad  (1 \leq i \leq D-1), \qquad \qquad  w^+_i  \in E^*_i U \qquad  (2 \leq i \leq D),\\
 R w^-_i &=  \frac{q^i(q^i-1)^2}{(q-1)(q^{i+1}-1)} w^-_{i+1} - \frac{q^{i-1}(q-1)}{q^{i+1}-1} w^+_{i+1}\qquad (1 \leq i \leq D-2), \\
  Rw^-_{D-1}&= -q^{D-2} \frac{q-1}{q^{D}-1} w^+_{D}, \\
  Rw^+_i &= q^{i-1} \frac{q^{i-1}-1}{q-1} w^+_{i+1} \qquad (2\leq i \leq D-1), \qquad \quad Rw^+_D=0,\\
 L w^-_i &=  
 \frac{(q^D-q^i)(q^{N-D}-q^i)}{q-1} w^-_{i-1} \qquad (2 \leq i \leq D-1), \qquad Lw^-_1=0,\\
 L w^+_i &= \frac{(q^i-1)(q^D-q^{i-1})(q^{N-D}-q^{i-1})}{(q-1)(q^{i-1}-1)} w^+_{i-1} - \frac{q^{i-2} (q^D-1)(q^{N-D}-1)}{q^{i-1}-1} w^-_{i-1}\\
 & \qquad \qquad \qquad \qquad \qquad \qquad \qquad \qquad \qquad (3 \leq i \leq D),\\
  L w^+_2 &= - \frac{ (q^D-1)(q^{N-D}-1)}{q-1} w^-_1, \\
 F w^-_i &= \Bigl(a_i - q^{i-1} \bigl(q^{N-D}+q^D-2\bigr) \Bigr)w^-_i \qquad \qquad (1 \leq i \leq D-1), \\
 F w^+_i &= \Bigl(a_i - q^{i-1} \bigl(q^{N-D}+q^D-q^i-q^{i-1}-1+q^{-1}\bigr) \Bigr)w^+_i \qquad \qquad (2 \leq i \leq D). 
 \end{align*}
 \end{enumerate}
 \end{theorem}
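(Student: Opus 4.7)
My plan is to exploit the spectral data encoded in the matrices $\mathsf{H}_i$ of Definition \ref{def:Hi} to construct explicit bases for the five candidate summands. For each pair $(i,j)$ in the range appropriate to $\mathsf{H}_i$, let $w^{(j)}_i \in E^*_i U$ denote the vector whose coordinates in the $(0/1)$-basis of $E^*_i U$ are the entries of the $j$-th column of $\mathsf{H}_i$. By Lemma \ref{lem:Hdiag}, each $w^{(j)}_i$ is an eigenvector of $F|_{E^*_i U}$ with eigenvalue $\lambda^{(j)}_i$. By the block structure of $\mathsf{H}^{-1}_{i+1}\mathsf{R}_i\mathsf{H}_i$ and $\mathsf{H}^{-1}_{i-1}\mathsf{L}_i\mathsf{H}_i$ in Lemmas \ref{lem:HiRH} and \ref{lem:HiLH}, the raising map $R$ and lowering map $L$ send $w^{(j)}_i$ to a scalar multiple of $w^{(j)}_{i+1}$ or $w^{(j)}_{i-1}$ for each $j \in \{1,2,3,4\}$, while for $j \in \{5,6\}$ the two columns mix through a lower-right $2 \times 2$ block.

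Next I identify the summands. Taking $w_i := w^{(1)}_i$ for $0 \leq i \leq D$ gives $W_1$; one checks that $w_i$ is a scalar multiple of $\widehat{\Gamma_i(x)}$, so $W_1$ is the primary $T$-module of \cite[Section~7]{int}. For $j \in \{2,3,4\}$, the vectors $w^{(j)}_i$ (indexed over $1 \leq i \leq D-1$ for $j=2$, and $1 \leq i \leq D$ for $j=3,4$, per the columns present in $\mathsf{H}_D$) span a thin $T$-invariant subspace $W_j$; the raising and lowering scalars in parts (ii), (iii), (iv) are precisely the corresponding diagonal entries of $\mathsf{H}^{-1}_{i+1}\mathsf{R}_i\mathsf{H}_i$ and $\mathsf{H}^{-1}_{i-1}\mathsf{L}_i\mathsf{H}_i$, and the $F$-eigenvalues match the closed forms for $\lambda^{(2)}_i, \lambda^{(3)}_i, \lambda^{(4)}_i$ in Lemma \ref{lem:Hdiag}. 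Finally, setting $w^-_i := w^{(5)}_i$ for $1 \leq i \leq D-1$ and $w^+_i := w^{(6)}_i$ for $2 \leq i \leq D$, the $2 \times 2$ coupling blocks in Lemmas \ref{lem:HiRH}, \ref{lem:HiLH} deliver verbatim the formulas of part (v), including the off-diagonal terms responsible for the nonthin behaviour of $W_5$, as well as the correct boundary cases at $i=1, D-1, D$ where only one of $w^\pm_i$ is present.

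For the thin modules $W_1, W_2, W_3, W_4$, irreducibility follows from the fact that the interior raising and lowering coefficients along each ladder are nonzero (a routine verification using $N > 2D$ and $q \geq 3$), so any $T$-submodule containing one basis vector must contain them all. For $W_5$, a nonzero $T$-submodule $W' \subseteq W_5$ yields, for each $2 \leq i \leq D-1$, an $F$-invariant subspace of the $2$-dimensional $E^*_i W_5$; since $\lambda^{(5)}_i \neq \lambda^{(6)}_i$ (another routine check), $E^*_i W'$ must be spanned by some subset of $\{w^-_i, w^+_i\}$, and the nonzero off-diagonal coupling entries in $\mathsf{H}^{-1}_{i+1}\mathsf{R}_i\mathsf{H}_i$ and $\mathsf{H}^{-1}_{i-1}\mathsf{L}_i\mathsf{H}_i$ force both vectors into $W'$ as soon as one lies there, and then propagation along $R$ and $L$ reaches every basis vector. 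Pairwise orthogonality of $W_1, \dots, W_5$ follows from \cite[Lemma~3.3]{curtin} together with the observation that the five summands have pairwise distinct $(\text{endpoint}, \text{local eigenvalue})$ pairs and are therefore mutually non-isomorphic. The dimension count $(D+1) + (D-1) + D + D + (2D-2) = 6D-2 = \dim U$ finally certifies equality.

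The main obstacle is the irreducibility and nonthinness of $W_5$: one must carefully track how $R$ and $L$ interleave the $w^-$- and $w^+$-families through the two-dimensional middle subspaces, verify nonvanishing of the coupling coefficients across the full range $2 \leq i \leq D-1$, and handle the asymmetric truncation at $i=1$ and $i=D$. Every other verification is a bookkeeping exercise converting the entries of $\mathsf{H}^{-1}_{i\pm 1}\mathsf{R}_i\mathsf{H}_i$ and $\mathsf{H}^{-1}_{i-1}\mathsf{L}_i\mathsf{H}_i$ into the stated scalar formulas.
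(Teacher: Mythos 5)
Your construction is the same as the paper's: interpret ${\sf H}_i$ as the transition matrix from the $(0/1)$-basis of $E^*_iU$ to an $F$-eigenbasis, read off the actions of $R$, $L$, $F$ from the conjugated matrices in Lemmas \ref{lem:Hdiag}--\ref{lem:HiLH}, and conclude orthogonality of the five summands from their mutual nonisomorphism. The one place you genuinely diverge is the irreducibility step. The paper disposes of it by citing the Hobart--Ito results (the irreducibility assertion in \cite[Theorem~2.2]{hobart} for the thin modules and \cite[Section~6.1]{hobart} for the nonthin $W_5$), whereas you prove irreducibility directly from the data already computed: nonvanishing of the interior ladder coefficients for $W_1,\dots,W_4$, and for $W_5$ the separation $\lambda^{(5)}_i\neq\lambda^{(6)}_i$ combined with the nonzero off-diagonal coupling entries, so that any nonzero submodule propagates to the whole module. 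Your argument is sound --- the only point worth making explicit is the first move, namely that a nonzero $T$-submodule $W'$ satisfies $W'=\sum_i E^*_iW'$ and each nonzero $E^*_iW'$ is an $F$-invariant subspace of the (at most two-dimensional) $E^*_iW_j$, hence contains one of the designated eigenvectors before propagation can begin. The trade-off is that your route is self-contained at the cost of some routine nonvanishing checks (all of which do hold under $N>2D$ and $q\geq 3$), while the paper's citation to \cite{hobart} is essentially free since the same classification is invoked again in Proposition \ref{prop:complete}.
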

 \begin{proof} Let $0 \leq i \leq D$. In Definition \ref{def:Hi}  we introduced a matrix ${\sf H}_i$, and in Lemma \ref{lem:Hinv}  we  showed that ${\sf H}_i$ is invertible.
 By Lemma \ref{lem:Hdiag}, we may interpret ${\sf H}_i$ as a transition matrix from the $(0/1)$-basis for $E^*_iU$ to an $F_i$-eigenbasis for $E^*_iU$. We denote this $F_i$-eigenbasis as follows:

 \begin{align*} 
\begin{tabular}[t]{c|c}
{\rm case }& {\rm $F_i$-eigenbasis}
 \\
 \hline
$i=0$ & $w^{(1)}_0$ \\
$i=1$ & $w^{(1)}_1, w^{(2)}_1,  w^{(3)}_1,  w^{(4)}_1,  w^{-}_1$          \\
$2 \leq i \leq D-1$ & $w^{(1)}_i, w^{(2)}_i,  w^{(3)}_i,  w^{(4)}_i,  w^{-}_i, w^{+}_i$          \\
$i=D$ & $w^{(1)}_D, w^{(3)}_D,   w^{(4)}_D,  w^{+}_D$          \\
    \end{tabular}
\end{align*}
The following hold by Lemmas \ref{lem:Hdiag}--\ref{lem:HiLH} and the irreducibility assertion in \cite[Theorem~2.2]{hobart}: 
\begin{enumerate}
\item[\rm (i)] the vectors $\lbrace w^{(1)}_i\rbrace_{i=0}^D$ form a basis for an irreducible $T$-module that meets the description of $W_1$;
\item[\rm (ii)] the vectors $\lbrace w^{(2)}_i\rbrace_{i=1}^{D-1}$ form a basis for an irreducible $T$-module that meets the description of $W_2$;
\item[\rm (iii)] the vectors $\lbrace w^{(3)}_i\rbrace_{i=1}^D$ form a basis for an irreducible $T$-module that meets the description of $W_3$;
\item[\rm (iv)] the vectors $\lbrace w^{(4)}_i\rbrace_{i=1}^D$ form a basis for an irreducible $T$-module that meets the description of $W_4$.
\end{enumerate}
The following holds by Lemmas \ref{lem:Hdiag}--\ref{lem:HiLH} and \cite[Section~6.1]{hobart}:
\begin{enumerate}
\item[\rm (v)] the vectors $\lbrace w^{-}_i\rbrace_{i=1}^{D-1} \cup \lbrace w^{+}_i\rbrace_{i=2}^{D}$ form a basis for an irreducible $T$-module that meets the description of $W_5$.
\end{enumerate}
By construction, the sum $U=\sum_{i=1}^5 W_i$ is direct. This direct sum is orthogonal because the summands are mutually nonisomorphic, and any two
nonisomorphic irreducible $T$-modules are orthogonal.
 \end{proof}

 \section{Some comments about the subspace $U$}

\noindent We continue to discuss the bilinear forms graph $\Gamma=(X, \mathcal R)$ and the adjacent vertices $x,y \in X$.
Recall the subspace $U=U(x,y)$ from Definition  \ref{def:TmodU}.
In this section, we have some comments about $U$.

\begin{proposition} \label{prop:complete} Each irreducible $T$-module with endpoint $1$
is isomorphic to exactly one of the $T$-modules $W_2, W_3, W_4, W_5$ from Theorem
\ref{thm:Udecomp}.
\end{proposition}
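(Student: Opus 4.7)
The approach is to combine Theorem \ref{thm:Udecomp} with the Hobart/Ito classification of irreducible $T$-modules with endpoint one for the bilinear forms graph from \cite{hobart}. First I would recall from Lemma \ref{lem:localSpec} that the local graph $\Gamma(x)$ has exactly five distinct eigenvalues, one of which is the valency $a_1$, and the other four are $q^{N-D}-q-1$, $q^D-q-1$, $-1$, $-q$. A standard argument (see the discussion of local eigenvalues at the end of Section~3) shows that the local eigenvalue of an irreducible $T$-module with endpoint one and $\dim E^*_1W=1$ must be an eigenvalue of the local graph $\Gamma(x)$, and cannot equal $a_1$ because that eigenspace is spanned by a vector that generates the primary $T$-module. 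Hence there are at most four possibilities for the local eigenvalue, and by Theorem \ref{thm:Udecomp} the modules $W_2,W_3,W_4,W_5$ realize all four of them.

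Next I would observe that $W_2,W_3,W_4,W_5$ are pairwise nonisomorphic because they have distinct local eigenvalues, and the local eigenvalue is a $T$-module isomorphism invariant. It remains to show that for each of the four possible local eigenvalues, there is (up to isomorphism) at most one irreducible $T$-module with endpoint one having that local eigenvalue. This is precisely the content of the Hobart/Ito classification in \cite{hobart}: for the bilinear forms graph, an irreducible $T$-module with endpoint one is determined up to isomorphism by its local eigenvalue, together with the datum of whether it is thin or nonthin in the case when both possibilities could a priori occur.

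The main obstacle is the treatment of the nonthin case. The modules $W_2,W_3,W_4$ are thin, and for thin irreducible modules the classification is straightforward: the local eigenvalue together with the action of $F$ on $E^*_1W$ determine the module. The module $W_5$, however, is nonthin with local eigenvalue $-q$, and one must appeal to the fact that the $-q$-eigenspace of the local graph supports a specific structure coming from the description of $\Gamma(x)$ as a $(q-1)$-clique extension of $K_{[D]}\times K_{[N-D]}$ (noted before Lemma \ref{lem:localSpec}); this is exactly what is handled in \cite[Section~6.1]{hobart} and invoked in part (v) of the proof of Theorem \ref{thm:Udecomp}. Assembling these ingredients, every irreducible $T$-module with endpoint one is isomorphic to exactly one of $W_2,W_3,W_4,W_5$.
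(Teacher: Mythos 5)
Your proposal is correct and takes essentially the same route as the paper: the paper's proof is a one-line appeal to the Hobart/Ito classification of the endpoint-one irreducible $T$-modules \cite[Theorems 2.2, 2.3, 6.1]{hobart}, declaring the result immediate. You simply make explicit the details the paper leaves implicit --- that the local eigenvalue is an isomorphism invariant, that the four nonprimary summands of $U$ realize the four admissible local eigenvalues from Lemma \ref{lem:localSpec}, and that the classification shows each local eigenvalue is realized by a unique isomorphism class --- so no further comparison is needed.
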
 
\begin{proof} The irreducible $T$-modules with endpoint $1$ are classified up to isomorphism in \cite[Theorems 2.2, 2.3, 6.1]{hobart}.
The result is immediate from the classification.
\end{proof}

 \begin{proposition} We have
 \begin{align*}
 T(x)\hat y = U = T(y) \hat x.
 \end{align*}
 \end{proposition}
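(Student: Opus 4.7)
The plan is to show both equalities by exploiting the irreducible decomposition in Theorem \ref{thm:Udecomp}. First observe that since $\partial(x,y)=1$, Proposition \ref{prop:parts}(i) gives $O_{1,0}=\{y\}$, so $\hat y=\widehat{O_{1,0}}$ is one of the $(0/1)$-basis vectors of $U$. By Lemma \ref{lem:UisTmod}, $U$ is a $T(x)$-module, hence $T(x)\hat y\subseteq U$, and the same reasoning with $x,y$ swapped (using Lemma \ref{lem:Usym}) gives $T(y)\hat x\subseteq U$.

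For the reverse inclusion $U\subseteq T(x)\hat y$, the plan is to use that the five summands $W_1,\ldots,W_5$ in Theorem \ref{thm:Udecomp} are pairwise nonisomorphic: $W_1$ has endpoint $0$ while $W_2,W_3,W_4,W_5$ have endpoint $1$ with the four distinct local eigenvalues $q^{N-D}-q-1$, $q^D-q-1$, $-1$, $-q$. Consequently the $W_j$ are the isotypic components of $U$, and any $T(x)$-submodule of $U$ equals the direct sum of those $W_j$ on which it has nonzero orthogonal projection. Applied to $T(x)\hat y$, this yields
\begin{align*}
T(x)\hat y \;=\; \bigoplus_{j:\,P_j\hat y\neq 0} W_j,
\end{align*}
where $P_j$ denotes the orthogonal projection of $U$ onto $W_j$. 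It therefore suffices to check that $P_j\hat y\neq 0$ for each $j=1,\ldots,5$.

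Since $\hat y\in E^*_1 U$, each projection $P_j\hat y$ lies in $E^*_1 W_j$, which (by inspection of Theorem \ref{thm:Udecomp}) is one-dimensional and spanned by the corresponding vector of the $F$-eigenbasis of $E^*_1 U$. The transition matrix from the $(0/1)$-basis of $E^*_1 U$ to this $F$-eigenbasis is $H_1^{-1}$ from Lemma \ref{lem:Hinv}, and its first column $\tfrac{1}{k}(1,1,1,1,1)^t$ expresses $\hat y=\widehat{O_{1,0}}$ in the $F$-eigenbasis. All five coefficients equal $1/k\neq 0$, so $P_j\hat y\neq 0$ for every $j$, and hence $T(x)\hat y=U$. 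The equality $T(y)\hat x=U$ then follows by the symmetry of the setup: interchanging the roles of $x$ and $y$ and invoking $U(x,y)=U(y,x)$ from Lemma \ref{lem:Usym} (together with distance-transitivity, which makes $(y,x)$ play the same role as $(x,y)$) repeats the same argument from $y$'s perspective.

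The only delicate point is the passage from ``$P_j\hat y\neq 0$'' to ``$W_j\subseteq T(x)\hat y$''; this requires precisely the pairwise non-isomorphism of the $W_j$, but that is immediate from the list of endpoints and local eigenvalues, so no real obstacle remains once Theorem \ref{thm:Udecomp} and Lemma \ref{lem:Hinv} are in hand.
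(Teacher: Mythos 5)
Your proof is correct, but the key step is argued quite differently from the paper. For the inclusion $U\subseteq T(x)\hat y$ the paper argues by contradiction: if $T(x)\hat y$ were a proper sum $\sum_{\ell\in K}W_\ell$, it picks $j\notin K$, passes to the span $W$ of \emph{all} irreducible $T(x)$-modules in the standard module $V$ isomorphic to $W_j$, deduces $W\perp\hat y$, and then invokes distance-transitivity to conclude $W\perp E^*_1V$ --- contradicting the fact that $W_j$ has endpoint $0$ or $1$. You instead stay entirely inside $U$ and compute: since the $W_j$ are pairwise nonisomorphic (distinct endpoints/local eigenvalues), every $T(x)$-submodule of $U$ is the orthogonal direct sum of those $W_j$ it meets, so it suffices to check that the projection of $\hat y=\widehat{O_{1,0}}$ onto each $W_j$ is nonzero; this is read off from the first column $\tfrac1k(1,1,1,1,1)^t$ of ${\sf H}_1^{-1}$, whose entries are the coefficients of $\hat y$ in the $F_1$-eigenbasis $w^{(1)}_1,w^{(2)}_1,w^{(3)}_1,w^{(4)}_1,w^-_1$ of $E^*_1U$. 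Your route is more explicit and avoids both distance-transitivity and the global isotypic-component argument, at the cost of relying on the computed entries of ${\sf H}_1^{-1}$; it also yields the exact decomposition $\hat y=\tfrac1k\sum_j w^{(j)}_1$ as a byproduct. The paper's route is entry-free and would survive in settings where the transition matrices are not written down. One small caveat: by the paper's convention (Section 2), ${\sf H}_1$ is the transition matrix \emph{from} the $(0/1)$-basis \emph{to} the eigenbasis, so ${\sf H}_1^{-1}$ is the transition matrix in the opposite direction; your wording reverses this, but the quantity you actually use (the first column of ${\sf H}_1^{-1}$ as the coordinate vector of $\hat y$ in the eigenbasis) is the right one, so the argument stands.
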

 \begin{proof}  We first show that $T(x){\hat y}=U$. By construction, $U$ is a $T(x)$-module that contains $\hat y$. Therefore $U$ contains $T(x){\hat y}$.
 We assume this containment is proper, and get a contradiction. Recall the orthogonal direct sum $U=\sum_{\ell =1}^5 W_\ell$ from Theorem \ref{thm:Udecomp}.
 Since $\lbrace W_\ell \rbrace_{\ell=1}^5$ are mutually nonisomorphic, there exists a proper subset $K$ of $\lbrace 1,2,3,4,5\rbrace$ such that
 $T(x){\hat y} = \sum_{\ell \in K} W_\ell$. Pick $j\in \lbrace 1,2,3,4,5\rbrace \backslash K$.
 Let $W$ denote the span of the irreducible $T(x)$-modules that are isomorphic to $W_j$.
 Since nonisomorphic irreducible $T(x)$-modules are orthogonal, 
  $W$ is orthogonal to $W_\ell $ for $\ell\in K$. Therefore $W$ is orthogonal to $T(x){\hat y}$. Therefore $W$ is orthogonal to $\hat y$.
 Since $\Gamma$ is distance-transitive, $W$ is orthogonal to ${\hat {\sf y}}$ for all ${\sf y} \in \Gamma(x)$.
 Therefore, $W$ is orthogonal to $E^*_1V$ where $E^*_1=E^*_1(x)$. By construction $W_j \subseteq W$, so $W_j$ is orthogonal to $E^*_1V$.
 By Theorem \ref{thm:Udecomp}, $W_j$ has endpoint $0$ or $1$. In either case $W_j$ is not orthogonal to $E^*_1V$.
 This is a contradiction, so $T(x){\hat y}=U$.
 Interchanging $x \leftrightarrow y$ and using Lemma \ref{lem:Usym}, we obtain $T(y){\hat x}=U$.
  \end{proof}
 
 \noindent We endow the standard module $V$ with the entry-wise product $\circ$. This product is described as follows.
 Pick $u, v \in V$ and write
 \begin{align*}
 u = \sum_{z \in X} u_z {\hat z}, \qquad \qquad v = \sum_{z \in X} v_z {\hat z}, \qquad \qquad u_z, v_z \in \mathbb C.
 \end{align*}
 Then 
 \begin{align*}
 u \circ v = \sum_{z \in X} u_z v_z {\hat z}.
 \end{align*}
 \noindent For any subsets $S_1, S_2 \subseteq X$ we have
 \begin{align} \label{eq:principle}
 \widehat S_1 \circ \widehat S_2 = {\widehat {S_1 \cap S_2}}.
 \end{align}
 
 \begin{proposition}  \label{prop:circ} For $u,v\in U$ we have $u \circ v \in U$.
 \end{proposition}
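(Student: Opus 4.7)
The plan is to leverage the basis provided by Definition \ref{def:TmodU}, together with the multiplicative rule \eqref{eq:principle}, and reduce to checking the statement on basis vectors.

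First I would recall that by Definition \ref{def:TmodU}, $U$ has as a basis the characteristic vectors $\widehat{O}$ of the $6D-2$ subsets $O$ in the $(x,y)$-partition of $X$. Because $\circ$ is bilinear, to prove $u \circ v \in U$ for all $u, v \in U$ it suffices to show that $\widehat{O} \circ \widehat{O'} \in U$ for every pair of subsets $O, O'$ appearing in the $(x,y)$-partition.

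Next I would apply \eqref{eq:principle}, which gives $\widehat{O} \circ \widehat{O'} = \widehat{O \cap O'}$. Since the subsets of the $(x,y)$-partition are pairwise disjoint and cover $X$, there are only two cases. If $O = O'$, then $\widehat{O} \circ \widehat{O'} = \widehat{O}$, which is one of the basis vectors of $U$ and thus lies in $U$. If $O \neq O'$, then $O \cap O' = \emptyset$, so $\widehat{O} \circ \widehat{O'} = 0 \in U$.

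Combining these observations with bilinearity of $\circ$ yields $u \circ v \in U$ for all $u, v \in U$. There is no substantive obstacle here: once the partition structure of the defining basis is recognized, the argument is immediate from \eqref{eq:principle}. The only thing worth emphasizing in the write-up is the role of the disjointness of the subsets $O_{i,i-1}, O^A_{i,i}, O^B_{i,i}, O^C_{i,i}, O^D_{i,i}, O_{i-1,i}$ guaranteed by Proposition \ref{prop:parts}.
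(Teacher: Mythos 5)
Your proof is correct and follows the same route as the paper: reduce to the $(0/1)$-basis by bilinearity, then apply \eqref{eq:principle} together with the disjointness of the parts to get $\widehat{O}\circ\widehat{O'}=\delta_{O,O'}\,\widehat{O}\in U$. The paper's proof is exactly this computation, written as $u\circ v=\delta_{u,v}\,u$.
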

 \begin{proof} Without loss of generality, we assume that $u,v$ are contained in the $(0/1)$-basis for $U$
 from Definition \ref{def:TmodU}.  Using \eqref{eq:principle} we obtain $u \circ v = \delta_{u,v} u \in U$.
 \end{proof} 
 
 \noindent  With respect to the primitive idempotent $E=E_1$, the graph $\Gamma$ is $Q$-polynomial in the sense of 
 \cite[Definition~11.1]{int}.
  The Norton algebra of $\Gamma$ consists of the vector space $EV$ together
 with a binary operation $\star: EV \times EV \to EV$ such that
 \begin{align*}
 u \star v = E (u \circ v), \qquad \qquad u,v \in EV.
 \end{align*}
 The Norton algebra $EV$ is commutative and nonassociative. See \cite{norton} for more information about the Norton algebra.
 \medskip
 
 \noindent Note that $EU \subseteq U$ since $U$ is a $T$-module.
 
 \begin{proposition} For $u,v\in EU$ we have $u \star v \in EU$. 
 \end{proposition}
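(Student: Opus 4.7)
The plan is to obtain this as a one-line consequence of Proposition \ref{prop:circ} together with the $T$-invariance of $U$ established in Lemma \ref{lem:UisTmod}. The key observation is that the primitive idempotent $E=E_1$ lies in the Bose-Mesner algebra $M$, hence in $T=T(x)$, so $E$ preserves $U$.

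First I would take $u, v \in EU$. Since $EU \subseteq U$ (as noted immediately before the statement), we have $u, v \in U$. By Proposition \ref{prop:circ} the entrywise product $u \circ v$ lies in $U$. Now by the definition of the Norton algebra product, $u \star v = E(u \circ v)$, and since $u \circ v \in U$, the vector $E(u \circ v)$ lies in $EU$ by the very definition of the subspace $EU = \{Ew : w \in U\}$. This gives $u \star v \in EU$, as required.

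There is no real obstacle here; the proof is a formal bookkeeping argument combining two facts already established: closure of $U$ under $\circ$ (Proposition \ref{prop:circ}) and the fact that $E \in T$ acts on $U$. The substantive content was already absorbed into Proposition \ref{prop:circ}, whose proof relied on the $(0/1)$-basis description of $U$ from Definition \ref{def:TmodU} together with the identity $\widehat{S_1} \circ \widehat{S_2} = \widehat{S_1 \cap S_2}$ and the fact that distinct subsets in the $(x,y)$-partition are disjoint.
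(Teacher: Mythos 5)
Your argument is correct and is exactly the paper's proof: the authors likewise deduce the claim from Proposition \ref{prop:circ} together with the containment $EU \subseteq U$, so that $u \star v = E(u\circ v) \in EU$. You have merely spelled out the same one-line bookkeeping in more detail.
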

 \begin{proof} By Proposition \ref{prop:circ} and since $EU \subseteq U$.
 \end{proof}

  \section{Directions for future research}
 We continue to discuss the bilinear forms graph $\Gamma=(X,\mathcal R)$ and the adjacent vertices $x,y \in X$.
 In this section we give a conjecture and some open problems.
 \medskip
 
 \noindent Recall the subspace $U=U(x,y)$ from Definition  \ref{def:TmodU}, and the primitive idempotent $E=E_1$.

 \begin{conjecture} \label{conj:EU} \rm The following {\rm (i)--(ii)} hold:
 \begin{enumerate}
 \item[\rm (i)] The subspace $EU$ has a basis
 \begin{align*}
 E {\hat x}, \qquad E{\hat y}, \qquad E \widehat O^{(B)}_{1,1}, \qquad  E \widehat O^{(C)}_{1,1}, \qquad  E \widehat O^{(D)}_{1,1}.
 \end{align*}
 \item[\rm (ii)] The Norton subalgebra $EU$ is generated by $E\hat x$ and $E \hat y$.
 \end{enumerate}
 \end{conjecture}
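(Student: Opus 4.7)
The plan is to prove the two parts in sequence, using the orthogonal decomposition $U = W_1 \oplus W_2 \oplus W_3 \oplus W_4 \oplus W_5$ from Theorem \ref{thm:Udecomp} as the scaffolding. For part (i), since $E = E_1 \in M \subseteq T$ and the summands are mutually nonisomorphic irreducible $T$-modules, hence pairwise orthogonal, one obtains $EU = EW_1 \oplus EW_2 \oplus EW_3 \oplus EW_4 \oplus EW_5$ as an orthogonal direct sum. The first task is to verify $\dim EW_\ell = 1$ for every $\ell$. For the thin modules $W_1, \ldots, W_4$ this reduces to showing that $\theta_1$ is an $A$-eigenvalue on $W_\ell$, which follows from the explicit tridiagonal action of $A = L + F + R$ on the basis $\{w_i\}$ recorded in Theorem \ref{thm:Udecomp} together with the eigenvalue formulas \eqref{eq:theta}. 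For the nonthin $W_5$ the analogous claim is extracted from the two-block structure on $\{w_i^-\} \cup \{w_i^+\}$, using the explicit $R, L, F$ actions given in Theorem \ref{thm:Udecomp}(v).

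Once $\dim EU = 5$ is known, the construction of the basis proceeds as follows. One checks that $E\hat{x} \in EW_1$ is nonzero (since $\hat{x}$ generates $W_1$ as a $T$-module), while the four vectors $\hat{y}, \widehat{O}^{(B)}_{1,1}, \widehat{O}^{(C)}_{1,1}, \widehat{O}^{(D)}_{1,1}$ all lie in $E_1^* U$. Expanding them via the transition matrix ${\sf H}_1^{-1}$ from Lemma \ref{lem:Hinv} into the $F_1$-eigenbasis, and then applying $E$, yields their coordinates in the orthogonal decomposition $EW_1 \oplus \cdots \oplus EW_5$. Linear independence then reduces to showing that the resulting $5\times 5$ coordinate matrix is nonsingular, an explicit determinant computation in the entries of ${\sf H}_1^{-1}$.

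For part (ii), the plan is to compute iterated Norton products starting from $E\hat{x}$ and $E\hat{y}$ and show the subalgebra they generate has dimension $5$. The key observation is that $E\hat{x}$ is constant on each $\Gamma_i(x)$ and $E\hat{y}$ is constant on each $\Gamma_j(y)$, so the entry-wise product $E\hat{x} \circ E\hat{y}$ is constant on each intersection $\Gamma_i(x) \cap \Gamma_j(y)$. By Proposition \ref{prop:parts}, each such intersection is a union of cells of the $(x,y)$-partition, so $E\hat{x} \circ E\hat{y} \in U$ and hence $E\hat{x} \star E\hat{y} \in EU$. The diagonal products $E\hat{x}\star E\hat{x}$ and $E\hat{y}\star E\hat{y}$ lie in $EW_1$ and so are scalar multiples of $E\hat{x}$ and $E\hat{y}$ respectively (the latter using $U(x,y) = U(y,x)$ from Lemma \ref{lem:Usym}). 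To obtain the remaining three basis directions, one iterates further, considering products such as $E\hat{x} \star (E\hat{x} \star E\hat{y})$ and $(E\hat{x} \star E\hat{y}) \star (E\hat{x} \star E\hat{y})$, decomposing each along $\bigoplus_\ell EW_\ell$ and verifying the span fills $EU$.

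The main obstacle will be part (ii): the Norton projector $E$ acting on entry-wise products requires tracking both the intersection numbers (for the $\circ$-expansion over the $(x,y)$-partition) and the Krein parameters (for the $E$-projection back into $EV$), and verifying that sufficiently many iterated products have nonzero component in every nonprimary summand $EW_2, EW_3, EW_4, EW_5$ is where the arithmetic of the bilinear forms graph and the standing hypothesis $q \neq 2$ are expected to enter decisively.
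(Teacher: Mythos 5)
The statement you are trying to prove is Conjecture \ref{conj:EU}, which the paper explicitly leaves \emph{unproven}: it appears in Section 10 (``Directions for future research''), and the authors immediately follow it with open problems (notably the request to compute $u\star v$ for all pairs of the proposed basis elements) whose solutions would be prerequisites for your part (ii). So there is no proof in the paper to compare against, and your proposal should be judged as an attempt at an open problem.

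As such an attempt, it is a reasonable plan but not a proof. For part (i), the reduction of $\dim EW_\ell=1$ to ``$\theta_1$ is an eigenvalue of $A$ restricted to $W_\ell$'' is correct for the thin modules, and the observation that $E\hat x$ spans $EW_1$ while the other four vectors lie in $E_1^*U$ and can be expanded via ${\sf H}_1^{-1}$ is the right setup; but the two decisive computations --- that $\theta_1$ actually occurs in the spectrum of $A|_{W_\ell}$ for $\ell=2,3,4,5$ (with multiplicity one in the nonthin case $W_5$), and that the resulting $5\times 5$ coordinate matrix is nonsingular --- are only named, not carried out, and the whole claim rests on them. For part (ii) the gap is more serious: your closure argument (that $E\hat x\circ E\hat y$ is constant on the sets $\Gamma_i(x)\cap\Gamma_j(y)$ and hence lies in $U$, so $E\hat x\star E\hat y\in EU$) is correct and worth recording, as is the observation that $E\hat x\star E\hat x\in\mathbb C E\hat x$ and $E\hat y\star E\hat y\in\mathbb C E\hat y$; but these facts only show that the subalgebra generated by $E\hat x,E\hat y$ is \emph{contained} in $EU$. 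The assertion to be proved is that it \emph{equals} $EU$, i.e.\ that the iterated products have nonzero components in each of $EW_2,\dots,EW_5$, and on this point your proposal says only ``verify the span fills $EU$.'' That verification is precisely the content of the conjecture (and of the paper's Problem 10.2), so nothing has been proved. Note also a structural caution for your iteration: the first product $E\hat x\star E\hat y$ lies in the image under $E$ of the coarser $3D$-dimensional ``edge distance-partition'' subspace spanned by the vectors $\widehat{\Gamma_i(x)\cap\Gamma_j(y)}$, so one must check that the subsequent products genuinely escape into the directions that separate $O^B_{1,1},O^C_{1,1},O^D_{1,1}$; this is where the hypothesis $q\ne 2$ (which makes $O^B_{i,i}$ nonempty) must enter, and your proposal does not engage with it.
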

 
 \begin{problem}\rm Referring to  Conjecture \ref{conj:EU}(i), for all basis elements $u,v$ compute
 $u \star v$ as a linear combination of the five basis elements.
 \end{problem}
 
 \begin{problem} \rm Find all the nonzero elements $u \in EU$ such that $u \star u = u$.
 \end{problem}

\section{Acknowledgement} 
\noindent 
This paper was written during J. Williford's sabbatical visit to U. Wisconsin-Madison
(8/17/2025--12/14/2025). During this visit,
J. Williford was supported in part by the Simons Foundation Collaboration Grant 711898.    



\bigskip


\noindent Paul Terwilliger \hfil\break
\noindent Department of Mathematics \hfil\break
\noindent University of Wisconsin \hfil\break
\noindent 480 Lincoln Drive \hfil\break
\noindent Madison, WI 53706-1388 USA \hfil\break
\noindent Email: {\tt terwilli@math.wisc.edu }\hfil\break

\noindent Jason Williford \hfil\break
\noindent  Department of Mathematics and Statistics \hfil\break
\noindent University of Wyoming \hfil\break
\noindent  1000 E. University Ave. \hfil\break
\noindent  Laramie, WY 82071  USA \hfil\break
\noindent Email: {\tt jwillif1@uwyo.edu}\hfil\break

\section{Statements and Declarations}

\noindent {\bf Funding}: The author P. Terwilliger declares that no funds, grants, or other support were received during the preparation of this manuscript. 
The author J. Williford was supported in part by Simons Foundation Collaboration Grant 711898  during the preparation of this manuscript.
\medskip

\noindent  {\bf Competing interests}:  The authors  have no relevant financial or non-financial interests to disclose.
\medskip

\noindent {\bf Data availability}: Details on the combinatorial counts are available upon request.  

\end{document}